\theoremstyle{plain}
\newtheorem{theorem}{Theorem}
\newtheorem{proposition}[theorem]{Proposition}
\newtheorem{lemma}[theorem]{Lemma}
\theoremstyle{definition}
\newtheorem{definition}[theorem]{Definition}
\theoremstyle{remark}
\newtheorem{remark}[theorem]{Remark}
\newcommand{\R}{\mathbb{R}}
\newcommand{\p}{\mathbf{p}}
\newcommand{\q}{\mathbf{q}}
\newcommand{\x}{\mathbf{x}}
\renewcommand{\u}{\mathbf{u}}
\newcommand{\bftheta}{\boldsymbol{\theta}}
\newcommand{\vv}{\mathbf{v}}
\newcommand{\bfphi}{\boldsymbol{\phi}}
\newcommand{\bfvarphi}{\boldsymbol{\varphi}}
\newcommand{\Z}{\mathbf{Z}}
\newcommand{\bfnu}{\boldsymbol{\nu}}
\newcommand{\w}{\mathbf{w}}
\newcommand{\bfkappa}{\boldsymbol{\kappa}}
\newcommand{\bfxi}{\boldsymbol{\xi}}
\newcommand{\bfeta}{\boldsymbol{\eta}}
\newcommand{\bftau}{\boldsymbol{\tau}}
\title{Curvature Flow of Networks with Triple Junction Drag and Grain Rotation}
\author{Yuchuan Yang \& Selim Esedo\={g}lu}
\date{}
\begin{document}

\maketitle

\begin{abstract}
    We prove a local existence result for a PDE system that describes curvature motion of networks with a dynamic boundary condition known as triple junction drag. This model arises in the study of grain boundary evolution in polycrystalline materials. We demonstrate the possibility of a new type of topological change during the evolution of the network. Moreover, we adopt a dynamic surface tension model that depends on the crystallographic orientations of the grains, which are allowed to rotate. Lastly, we study how the stability of stationary solutions depend on the choice of surface tension.
\end{abstract}

\tableofcontents

\section{Introduction}
Motion by mean curvature of networks is a multiphase interfacial evolution that plays a prominent role in many applications.
One of the best known such is grain boundary motion in polycrystalline materials.
This evolution can be understood at a formal level as gradient flow for a cost function associated with the microstructure of the material that consists of the total surface area of interfaces separating neighboring grains (single crystal pieces) from one another -- an internal energy of the material that is dissipated during manufacturing processes such as annealing.
The metric of this formal gradient flow is the $L^2$ norm of the perturbation to the interfaces in the normal direction.
One of the challenging aspects of this dynamics are the free boundaries known as junctions along which three or more interfaces meet.
The evolution of junctions is part of the unknown in the problem; it is hoped that specifying reasonable boundary conditions (e.g. equilibrium boundary conditions known as the Herring angle condition \cite{herring} that specify the angles formed by the meeting of the interfaces) uniquely determines the evolution, at least starting from a good initial condition.

Another challenge are topological changes in the network that inevitably occur eventually, even after starting from good initial data, e.g. in the form of junction -- junction collisions.
This makes it particularly interesting to study weak solutions of the evolution that allow flowing through such singular events.
Nevertheless, a first step is typically a short time, classical existence and uniqueness result starting from smooth enough initial data that satisfy some compatibility with the junction conditions; the interval of existence guaranteed is bound to be well short of the occurrence of any singularity or topological changes.

When Herring angle conditions \cite{herring} are imposed at the free boundaries known as junctions, such a short time existence result was given in two dimensions in the well known early work of Bronsard \& Reitich \cite{bronsardreitich}.
One of the problems we study in this paper is the same short time existence question, but with different free boundary conditions that arise from using a different metric in the formal gradient flow formulation of the dynamics: The difference in the metric takes the form of an additional penalty on perturbations to the location of the junctions, and results in their typically slower evolution in response to the motion of the interfaces attached to them.
This related but different evolution, known as curvature motion of a network with {\em triple junction drag}, appears in models of microstructural evolution for nanocrystalline materials, see e.g. \cite{GOTTSTEIN2000397}.

Recently, there has been a resurgence of interest in curvature flow with triple junction drag as a possible explanation for the discrepancy between simulations with Herring angle condition and newly available experimental measurements of microstructure evolution \cite{rohrer_annual,peng_2022}.
Some of these studies also explore other effects, such as grain rotation that leads to dynamic surface tensions \cite{esedoglu_rotation,ep1}.
The main purpose of the present paper is to comment on these recent studies, especially \cite{ep1}, in the following manner:
\begin{enumerate}
\item Recent study \cite{ep1} asserts a short time existence claim for curve shortening flow with triple junction drag and dynamic surface tensions under the significant simplification that replaces all curves in the network with line segments, reducing the model to an ODE system.
  We state an existence and uniqueness result for the coupled PDE system that describes the original model where interfaces are parametrized curves, in the spirit of \cite{bronsardreitich}.
\item We revisit the study of stable, stationary configurations in the presence of a single junction that was taken up in \cite{ep1}.
By working with broadly accepted orientation and surface tension models such as that of Read \& Shockley \cite{read_shockley}, we obtain {\em different} conclusions regarding the existence and stability of nontrivial stationary states.
\item We highlight that additional types of topological changes in the network are enabled by replacing the Herring condition by triple junction drag, even in two dimensions with constant and equal surface tensions.
This has implications for numerical implementation of the model via parametrized curves, suggesting perhaps even greater motivation for weak formulations and algorithms that represent interfaces implicitly and thus allow seamless handling of topological changes (vs. explicit methods such as front tracking that are likely to become more cumbersome than in the no-drag case).

\end{enumerate}

\section{The Model}
In this section, we start by recalling the variational model -- the energy, or cost function -- often utilized in mesoscale models of microstructural evolution in polycrystalline materials, going back to the work of Mullins \cite{mullins}.
At the generality of interest to us as well as recent work such as \cite{ep1}, this also requires choosing a {\em surface tension} model that associates to each interface in the network a weight defined in terms of the orientations of the two grains on either side of it.

We will then recall the formal derivation of multiphase curvature motion, with Herring as well as ``triple junction drag'' free boundary conditions, as gradient descent for this energy in suitable metrics.

\subsection{Gradient flow of the total surface energy}
We recall the mesoscale model for grain boundary motion originally introduced by Mullins \cite{mullins}.
Let $\Omega$ be a bounded domain with smooth boundary in $\mathbb{R}^d$, representing the total region occupied by the polycrystalline material.
Grains that constitute the microstructure of the material are modeled as subsets $\Omega_j$ of $\Omega$ that partition it:
\begin{equation}
\label{eq:partition}
\bigcup_{j=1}^N \Omega_j = \Omega \mbox{ and } \sum_{j=1}^N \mbox{Vol}(\Omega_j) = \mbox{Vol}(\Omega)
\end{equation}
where we write $\mbox{Vol}(\Sigma)$ to denote the volume of a set $\Sigma \subset \mathbb{R}^d$.
Hence, morally speaking, two distinct sets $\Omega_i$ and $\Omega_j$, $i\not= j$, can only intersect along their boundaries: $\Omega_i \cap \Omega_j = (\partial\Omega_i) \cap (\partial\Omega_j)$, thus forming a partition of $\Omega$ with no overlaps and leaving no vacuum; see Figure \ref{fig:lattice} for an illustration.
\begin{figure}[h]
\begin{center}
\includegraphics[scale=1.25]{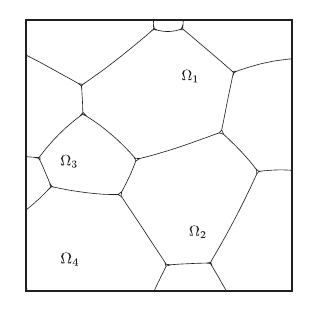}
\caption{\footnotesize Partition of a domain into essentially disjoint phases $\Omega_J$.}
\label{fig:lattice}
\end{center}
\end{figure}
The basic, isotropic version of Mullins' model we study here prescribes an internal energy associated with the microstructure of the material that has the following form:
\begin{equation}
\label{eq:mullinsE}
E(\Omega_1,\Omega_2,\ldots,\Omega_N) = \sum_{i\not= j} \sigma_{i,j}\mbox{Area}\Big( (\partial\Omega_i) \cap (\partial\Omega_j) \Big)
\end{equation}
where we write $\mbox{Area}(\Sigma)$ to denote the $(d-1)$ dimensional surface area of a set $\Sigma$ in $\mathbb{R}^d$.
The positive coefficients $\sigma_{i,j}$ are known as {\em surface tensions} associated with the interfaces $(\partial\Omega_i) \cap (\partial\Omega_j)$; in our setting, they will be determined by the crystallographic orientations $\theta_i$ and $\theta_j$  of the grains $\Omega_i$ and $\Omega_j$ on either side of a given grain boundary $(\partial\Omega_i) \cap (\partial\Omega_j)$.
In Mullins' model, the dynamics of the grain boundaries arise as steepest descent (gradient flow) for energy (\ref{eq:mullinsE}).
Weak formulations of gradient flow  for energy (\ref{eq:mullinsE}) in terms of sets of finite perimeter (the BV-formulation), such as minimizing movements \cite{almgren_taylor_wang,Luckhaus95}, require a {\em triangle inequality} to hold among the $\sigma_{i,j}$ to ensure well-posedness (existence of minimizers):
\begin{equation}
\label{eq:triangle}
\sigma_{i,j} + \sigma_{j,k} \geq \sigma_{i,k} \mbox{ for distinct $i$,$j$, and $k$}.
\end{equation}
Roughly speaking, a physical interpretation for condition (\ref{eq:triangle}) is the preclusion of {\em wetting} (whereby a third grain may form a thin layer coating the interface between two others) and {\em nucleation} (whereby a new grain of nonvanishing size may form spontaneously e.g. in the interior of an existing grain).
Even under this natural condition, during the long time evolution of the network of interfaces that is of interest in applications, many singularities and topological changes may occur that make weak formulations such as the BV-formulation and numerical methods based on them an appropriate choice.

In this study, we instead focus on the free boundary problem that arises as gradient flow for energy (\ref{eq:mullinsE}) starting from smooth, compatible initial data, for short time before any singularities or topological changes to the network may occur.
The novelty is in the choice of the metric, resulting in modified boundary conditions at free boundaries known as triple junctions, modeling what is known as {\em triple junction drag}.
This type of short time existence and uniqueness result is often a very classical first step in developing a more complete theory and, subsequently, computational methods with convergence guarantees.
It appears to be lacking in the literature for our particular boundary conditions, despite recent interest \cite{ep1}.
We closely follow the well known work of Bronsard \& Reitich \cite{bronsardreitich} that first carried out the analogous study in the context of the more familiar Herring angle condition at triple junctions for two dimensional networks of curves.

We thus specialize to two dimensions from here onwards, and in the spirit of \cite{bronsardreitich}, consider the typical scenario in a two dimensional grain network away from singularities, in the vicinity of a triple junction: Just three grains $\Omega_i$, $i\in\{1,2,3\}$ with piecewise smooth curves as their boundaries,  partitioning the domain $\Omega$, which we will take to be a ball.
Thus consider three evolving curves $\Gamma_j(t)\subset\overline{\Omega}$, $j=1,2,3$, each representing the grain boundary between grain $\Omega_{j-1}$ and grain $\Omega_j$.
(For convenience of notation, we will sometimes cyclically refer to grain 3 as grain 0, and to grain 1 as grain 4).
See Figure \ref{fig:network} for an illustration.
\begin{figure}[h]
\includegraphics[scale=0.25]{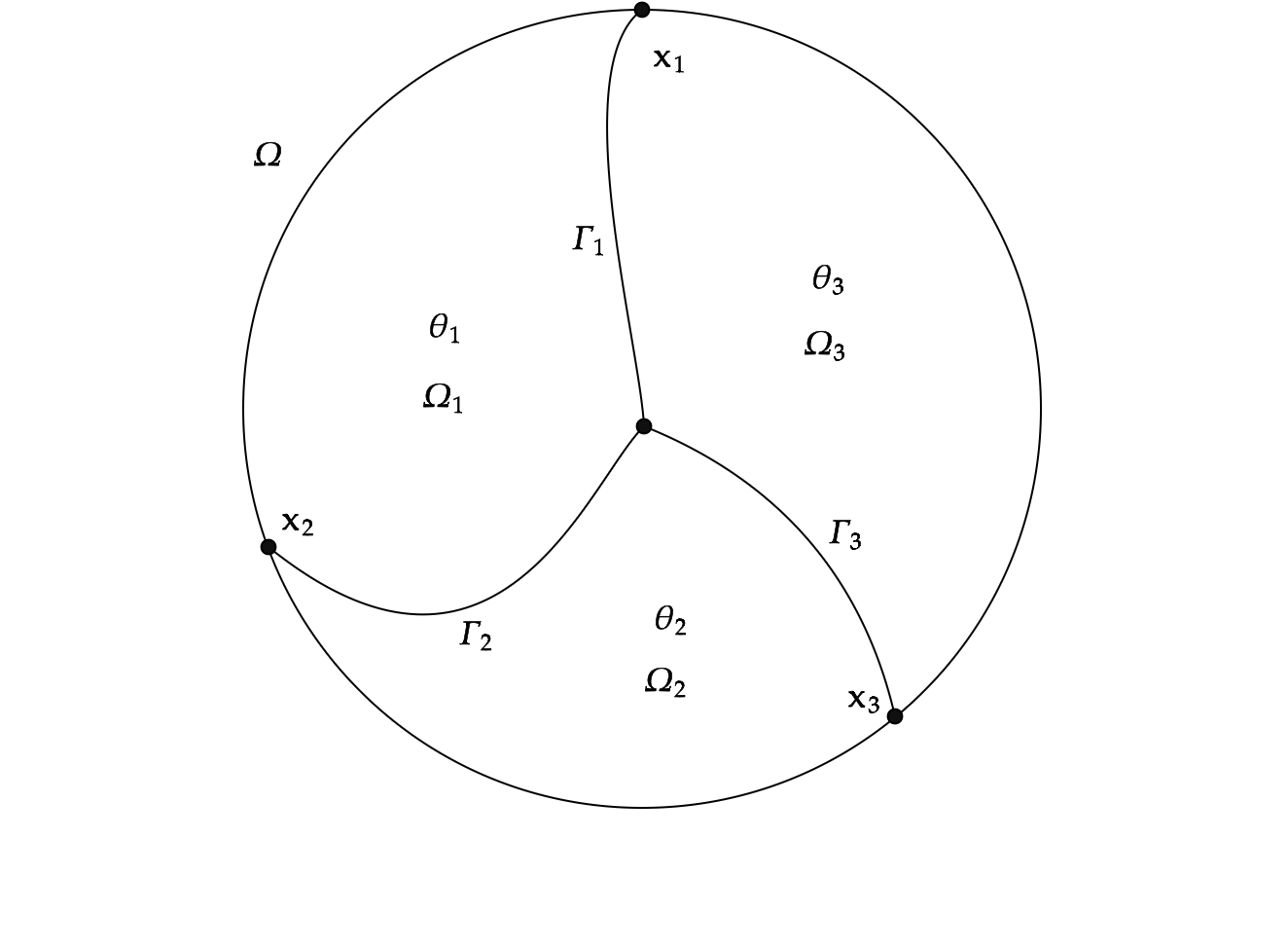}
\centering
\caption{An example of a network}
\label{fig:network}
\end{figure}
For every $t\geq 0$, the curves $\Gamma_j(t)$ will be required to meet at a triple junction at one endpoint, while the other endpoint will be fixed on the circle $\partial\Omega$.
More precisely, let each curve $\Gamma_j(t)$ be parametrized by $\p_j:[0,1]\times[0,T]\to\overline{\Omega}$, with components given by
\begin{align*}
    \p_1(x,t) &= (u_1(x,t),u_2(x,t))\\
    \p_2(x,t) &= (u_3(x,t),u_4(x,t))\\
    \p_3(x,t) &= (u_5(x,t),u_6(x,t)).
\end{align*}
We demand that for any $t\geq 0$, the three parametrized curves $\{ \Gamma_1(t), \Gamma_2(t), \Gamma_3(t) \}$ constitute a {\em network} $\mathcal{N}(t)$, which we define as follows:
\begin{definition}[Network]
    A \textit{network} $\mathcal{N}$ refers to the union of the images of three embedded $C^1$ curves $\p_j:[0,1]\to\overline{\Omega}$, $j=1,2,3$. That is, each curve $\p_j(x)$ must be
    \begin{enumerate}[label=(\roman*)]
        \item \textit{regular:} i.e. for any $x\in[0,1]$, we have $\p_{jx}(x)\neq 0$, and
        \item \textit{injective:} i.e. no self-intersections. 
    \end{enumerate}
    Furthermore, we require that the three curves meet \textit{only} at their starting point, i.e.
    \begin{align*}
        \bigcap_{j\in\{1,2,3\}} \{ \mathbf{p}_j(x) \, : \, 0 < x \leq 1 \} = \emptyset \mbox{ and } \p_1(0) = \p_2(0) = \p_3(0)
    \end{align*}
   while the other endpoints are three distinct points on the circle $\partial\Omega$, i.e.
   \begin{align*}
       \p_j(1)\equiv \x_j\in\partial\Omega \mbox{ with } \mathbf{x}_i \not= \mathbf{x}_j \mbox{ if } i\not= j.
   \end{align*}
\end{definition}
\noindent In particular, a network $\mathcal{N}$ partitions $\Omega$ into three connected components $\Omega = \Omega_1\cup\Omega_2\cup\Omega _3$ with $\Omega_j\cap\Omega _k=\emptyset$ if $j\neq k$ (up to a set of Lebesgue measure zero). Each $\Omega_j$ represents the region occupied by grain $j$, with grain orientation $\theta_j$.

Next, we discuss how the surface tension $\sigma_{j,j-1}$ associated with each curve $\Gamma_j$ is assigned.
The choice of surface tensions for a physically reasonable model is a matter of much discussion and research in the materials science literature.
In typical simulations with Mullins' model, the grains are assigned crystallographic orientations (usually at random); the surface tension associated with the interface between any two neighboring grains is then determined in terms of the misorientation between the two grains on either side of it.
In our simplified, two dimensional setting, the orientation $\theta_j$ for grain $\Omega_j$ will refer to the angle of rotation required to obtain its crystal lattice from that of a reference configuration.
The surface tension $\sigma_{i,j}$ associated with the interface between grains $\Omega_i$ and $\Omega_j$ will then be taken to be
\begin{align}
    \sigma_{i,j}(t)\coloneqq \sigma(\theta_i(t) - \theta_j(t))
\end{align}
where $\sigma : \R \to [0,+\infty)$ is a function that satisfies the following properties:
\begin{align}
    \sigma(-\theta) &= \sigma(\theta)  ,  \quad\forall \theta\in\R\label{S1}\tag{S1} \\
    \sigma(2\pi - \theta) &= \sigma(\theta),  \quad\forall \theta\in[0,\pi] \label{S2}\tag{S2}\\
    \sigma(\theta + 2\pi) &= \sigma(\theta),  \quad\forall \theta\in\R\label{S3}\tag{S3}
\end{align}
By \eqref{S1}, $\sigma_{i,j}$ satisfy the symmetry property $\sigma_{i,j} = \sigma_{j,i}$.
We note that the well known surface tension model of Read \& Shockley \cite{read_shockley} fits this description.
For $j\in\{1,2,3\}$, we will let $\theta_j$ depend on time, so that $\theta_j(t)\in\R$ will be the orientation of grain $\Omega_j$ at time $t\geq 0$.
(And according to our cyclical notation, we take $\theta_0(t) \equiv \theta_3(t)$ and $\theta_4(t) \equiv \theta_1(t)$).

For a network $\mathcal{N}$ with grain orientations $\bftheta = (\theta_1, \theta_2, \theta_3)$, the associated total surface energy (\ref{eq:mullinsE}) becomes
\begin{align}\label{eq:surface-energy}
    E(\mathcal{N},\bftheta) &\coloneqq \sum_{j=1}^3 \sigma_{j-1,j}\text{Length}(\Gamma_j) =\sum_{j=1}^3 \sigma_{j-1,j} \int_0^1 |\p_{jx}(x)|\;dx.
\end{align}
We are interested in gradient flow in a couple of different metrics for this energy with respect to {\em both} the parametrized curves $\Gamma_j$ {\em and} the orientations $\theta_j$ of the grains $\Omega_j$ they delineate.

With respect to the grain orientations $\theta_j$, we adopt the gradient flow dynamics given by
\begin{align}\label{eq:rotation}
    \frac{d\theta_j}{dt} = -\nu\frac{\delta E}{\delta \theta_j} &= \nu\bigg(\sigma'(\theta_{j-1} - \theta_j)\int_0^1 |\p_{jx}(x,t)|\;dx \nonumber\\
    &- \sigma'(\theta_j - \theta_{j+1})\int_0^1 |\p_{(j+1)x}(x,t)|\;dx\bigg)
\end{align}
for the sake of simplicity, and to remain close to the choice made in \cite{ep1} so that comparisons can be made in later sections. Here, $\nu\geq 0$ is a mobility factor which we have chosen to be a constant.
However, note that more physically relevant models may entail mobilities that depend on the size of the grains, e.g. as in \cite{esedoglu_rotation}.
We expect such enhancements would be easy to accommodate into much of the discussions of the present study.

To formally derive the evolution of the curves, we need to compute the first variation of the total surface energy functional with respect to the network. Assume that $\bftheta(t)\equiv\bftheta^0$ is constant, i.e. $\sigma_{j-1,j}(t) \equiv \sigma_{j-1,j}^0$. Let $\mathcal{N}^0$ be a network composed of three smooth curves parametrized by $\{\p_j^0(x)\}_{j=1}^3$, $x\in[0,1]$. Let $\bfxi = (\xi_1,\xi_2,\xi_3)$, with 
$\xi_j\in C^\infty([0,1];\R^2)$ satisfying 
\begin{align*}
    \xi_1(0) &= \xi_2(0) = \xi_3(0)\\
    \xi_j (1) &= 0, \qquad j=1,2,3.
\end{align*}
For each small $t>0$, let $\mathcal{N}(t)$ denote the network composed of the curves $\p_j^0(x) + t\xi_j(x)$. Following a standard computation of the first variation of Length$(\cdot)$ (and using integration by parts), we get
\begin{align*}
    \frac{d}{dt}\bigg|_{t=0}E(\mathcal{N}(t),\bftheta^0) &= \frac{d}{dt}\bigg|_{t=0} \sum_{j=1}^3 \sigma_{j-1,j}^0 \int_0^1 |\p_{jx}(x,t)|\;dx\\
    &= \sum_{j=1}^3 \int_0^1 -\sigma_{j-1,j}^0\bfkappa_j^0(x)\cdot\xi^j(x) |\p_{jx}^0(x)|\;dx - \bigg(\sum_{j=1}^3 \sigma_{j-1,j}^0\frac{\p_{jx}^0(0)}{|\p_{jx}^0(0)|}\bigg)\cdot \xi^1(0),
\end{align*}
where $\bfkappa_j^0 = \kappa_j^0 \bfnu_j$ is the curvature vector of the curve $\p_j^0$.
If we let $\mu>0$ represent an inverse mobility for the triple junction $\p_j^0(0)$, then we can write
\begin{align}\label{firstvar}
    \frac{d}{dt}\bigg|_{t=0}E(\mathcal{N}(t),\bftheta^0) & =\sum_{j=1}^3 \int_0^1 -\sigma_{j-1,j}^0\bfkappa_j^0(x)\cdot\xi^j(x) |\p_{jx}^0(x)|\;dx - \mu\bigg(\sum_{j=1}^3 \sigma_{j-1,j}^0 \frac{1}{\mu}\frac{\p_{jx}^0(0)}{|\p_{jx}^0(0)|}\bigg)\cdot \xi^1(0).
\end{align}

\begin{definition}\label{def:innerproduct}
    Let $\mathcal{N}^0$ be a network comprising three smooth curves parametrized by $\{\p_j^0(x)\}_{j=1}^3$, $x\in[0,1]$. Suppose that the three curves meet only at a triple junction $\p_1^0(0) = \p_2^0(0) = \p_3^0(0)$. Let $\bfxi = (\xi_1,\xi_2,\xi_3)$, $\bfeta = (\eta_1,\eta_2,\eta_3)$ be vector fields along $\mathcal{N}_0$ with 
    $\xi_j,\;\eta_j\in C^\infty([0,1];\R^2)$ satisfying 
    \begin{align*}
        \xi_1(0) &= \xi_2(0) = \xi_3(0)\\
        \eta_1(0) &= \eta_2(0) = \eta_3(0)\\
        \xi_j (1) &= \eta_j(1) = 0, \qquad j=1,2,3.\\
    \end{align*}
    For $\mu \geq 0$, we define the inner product
    \begin{align}
        \langle \bfxi , \bfeta  \rangle_{\mu,\mathcal{N}^0} \coloneqq \sum_{j=1}^3 \int_0^1 \xi^j(x) \cdot \eta^j(x) |\p_{jx}^0(x)|\;dx + \mu \xi^1 (0)\cdot \eta^1(0).
    \end{align}
\end{definition}

If $\mu>0$, this inner product $\langle,\rangle_{\mu,\mathcal{N}}$ contains both a 1-dimensional component (along the curves) and a 0-dimensional component (at the triple junction). If $\mu = 0$, this inner product reduces to the $L^2$ inner product along the network, which induces the Herring angle condition.

Using this inner product with $\mu>0$, \eqref{firstvar} reads
\begin{align}
    \frac{d}{dt}\bigg|_{t=0}E(\mathcal{N}(t),\bftheta^0) = \bigg\langle \frac{\delta E}{\delta \mathcal{N}}(\mathcal{N}^0, \bftheta^0), \bfxi  \bigg\rangle_{\mu,\mathcal{N}^0}.
\end{align}

 Thus, (formally) the gradient flow of the total surface energy functional with respect to this new metric is described by the following system of PDEs with a dynamic boundary condition at the triple junction:
\begin{align}
    \p_{jt}^\perp \coloneqq  \big( \p_{jt}\cdot\bfnu_j \big) \bfnu_j &= \sigma_{j-1,j}\bfkappa_j \label{normal}\\
    \p_{1t}(0,t) = \p_{2t}(0,t) &= \p_{3t}(0,t) = \frac{1}{\mu}\sum_{j=1}^3 \sigma_{j-1,j} \frac{\p_{jx}(0,t)}{|\p_{jx}(0,t)|}\label{drag}.
\end{align}

\subsection{The parametric problem}

We will consider the evolution of parametrized curves by the vectorial equation
\begin{align}
\label{eq:specialflow}
     \p_{jt} &= \sigma_{j-1,j}(t)\frac{\p_{jxx}}{|\p_{jx}|^2}
\end{align}
which is commonly referred to as a \textit{special flow} \cite{Mantegazza2019}.
For the sake of completeness, we recall the derivation, which entails a nontrivial choice of tangential velocity.
Indeed, the curvature vector associated to the interface with parametrization $\mathbf{p}_j$ is given by
\begin{equation}
\label{eq:kvector}
\frac{1}{|\mathbf{p}_j|} \partial_x \left( \frac{\mathbf{p}_{jx}}{|\mathbf{p}_{jx}|} \right) = \frac{\mathbf{p}_{jxx}}{|\mathbf{p}_{jx}|^2} - \frac{\mathbf{p}_{jx} \big( \mathbf{p}_{jx} \cdot \mathbf{p}_{jxx} \big)}{|\mathbf{p}_{jx}|^2}
\end{equation}
Flow (\ref{eq:specialflow}) is obtained as a description of curvature motion by noting that the second term in the right hand side of (\ref{eq:kvector}) is in the tangent direction; hence the normal component of the velocity $\mathbf{p}_{jt}$ in (\ref{eq:specialflow}) matches precisely the curvature vector (\ref{eq:kvector}).
More precisely, we introduce the following parametric problem:

\begin{definition}[Parametric problem]\mbox{}

    We say that $(\{\p_j(x,t)\}_{j=1}^3 , \{\theta_j(t)\}_{j=1}^3)$ is a solution to the \emph{parametric problem} starting at initial data $(\{\p_j^0(x))\}_{j=1}^3 , \{\theta_j^0\}_{j=1}^3)$ if it satisfies the following system
    
\begin{alignat*}{2}
    \p_{jt} &= \sigma_{j-1,j}(t)\frac{\p_{jxx}}{|\p_{jx}|^2}, \quad &&(x,t)\in[0,1]\times[0,T], \quad j=1,2,3\\
    \p_j(x,0) &= \p_j^0(x), \quad &&x\in[0,1],\quad j=1,2,3\\
    \p_1(0,t) &= \p_2(0,t) = \p_3(0,t), &&\text{at }x=0, \quad t\in[0,T]\\
    \p_{1t}(0,t) &= \frac{1}{\mu}\sum_{j=1}^3 \sigma_{j-1,j}(t)\frac{\p_{jx}(0,t)}{|\p_{jx}(0,t)|},&&\text{at }x=0, \quad t\in[0,T]\\
    \p_j(1,t) &\equiv \x_j, \quad &&t\in[0,T] ,\quad j=1,2,3\\
    \frac{d\theta_j}{dt} = -\nu\frac{\delta E}{\delta \theta_j} &= \nu\bigg(\sigma'(\theta_{j-1} - \theta_j)\int_0^1 |\p_{jx}(x,t)|\;dx\\
    &-\sigma'(\theta_j - \theta_{j+1})\int_0^1 |\p_{(j+1)x}(x,t)|\;dx\bigg),  \qquad && t\in[0,T],\quad j=1,2,3\\
    \theta_j(0) &= \theta_j^0,   &&j=1,2,3
\end{alignat*}

\end{definition}

We rewrite this system component-wise:

\begin{alignat}{2}\label{pde}
    u_{jt} &= \frac{\sigma_{3,1}}{|\p_{1x}|^2}u_{jxx}, &&(x,t)\in[0,1]\times[0,T], \quad j=1,2 \nonumber\\
    u_{jt} &= \frac{\sigma_{1,2}}{|\p_{2x}|^2}u_{jxx}, &&(x,t)\in[0,1]\times[0,T], \quad j=3,4 \nonumber\\
    u_{jt} &= \frac{\sigma_{2,3}}{|\p_{3x}|^2}u_{jxx}, &&(x,t)\in[0,1]\times[0,T], \quad j=5,6 \nonumber\\
    u_j(x,0) &= u_j^0(x), &&x\in[0,1], \quad j=1,\ldots,6 \nonumber\\
    u_j(0,t) &= u_{j+2}(0,t) = u_{j+4}(0,t), &&t\in[0,T], \quad j=1,2 \nonumber\\
    u_{1t} &= \frac{1}{\mu}\bigg( \frac{\sigma_{3,1}}{|\p_{1x}|}u_{1x} + \frac{\sigma_{1,2}}{|\p_{2x}|}u_{3x} + \frac{\sigma_{2,3}}{|\p_{3x}|}u_{5x}\bigg), &&\text{ at }x=0, \quad t\in[0,T] \nonumber\\
    u_{2t} &= \frac{1}{\mu}\bigg( \frac{\sigma_{3,1}}{|\p_{1x}|}u_{2x} + \frac{\sigma_{1,2}}{|\p_{2x}|}u_{4x} + \frac{\sigma_{2,3}}{|\p_{3x}|}u_{6x}\bigg), &&\text{ at }x=0, \quad t\in[0,T] \\
    \big( u_1(1,t) , u_2(1,t) \big) &\equiv \mathbf{x}_1, && t\in[0,T] \nonumber\\
    \big( u_3(1,t) , u_4(1,t) \big) &\equiv \mathbf{x}_2, && t\in[0,T] \nonumber\\
    \big( u_5(1,t) , u_6(1,t) \big) &\equiv \mathbf{x}_3, && t\in[0,T] \nonumber\\
    \frac{d\theta_j}{dt} = -\nu\frac{\delta E}{\delta \theta_j} &= \nu\bigg(\sigma'(\theta_{j-1} - \theta_j)\int_0^1 |\p_{jx}(x,t)|\;dx \nonumber\\
    &- \sigma'(\theta_j - \theta_{j+1})\int_0^1 |\p_{(j+1)x}(x,t)|\;dx\bigg), &&t\in[0,T],\quad j=1,2,3 \nonumber\\
    \theta_j(0) &= \theta_j^0, &&j=1,2,3\nonumber
\end{alignat}

\section{Main Results}
Before stating our short time existence theorem, we discuss {\em compatibility conditions} that need to be placed on initial data.

\begin{definition}[Parametric compatibility conditions]
\label{def:parcomp}
    We say that the initial data $(\{u_j^0(x)\}_{j=1}^6 , \{\theta_j^0\}_{j=1}^3)$ satisfies the compatibility conditions for \eqref{pde} if
    \begin{align}
        u_1^0 = u_3^0 &= u_5^0  \nonumber\\
        u_2^0 = u_4^0 &= u_6^0  \nonumber\\
        \frac{\sigma_{3,1}^0}{|\p_{1x}^0|^2}u_{1xx}^0 = \frac{\sigma_{1,2}^0 }{|\p_{2x}^0|^2}u_{3xx}^0 = \frac{\sigma_{2,3}^0 }{|\p_{3x}^0|^2}u_{5xx}^0 &= \frac{1}{\mu}\bigg(  \frac{\sigma_{3,1}^0}{|\p_{1x}^0|} u_{1x}^0 + \frac{\sigma_{1,2}^0}{|\p_{2x}^0|} u_{3x}^0 + \frac{\sigma_{2,3}^0}{|\p_{3x}^0|} u_{5x}^0  \bigg) \label{compatibility1}\\
        \frac{\sigma_{3,1}^0}{|\p_{1x}^0|^2}u_{2xx}^0 = \frac{\sigma_{1,2}^0 }{|\p_{2x}^0|^2}u_{4xx}^0 = \frac{\sigma_{2,3}^0 }{|\p_{3x}^0|^2}u_{6xx}^0 &= \frac{1}{\mu}\bigg(  \frac{\sigma_{3,1}^0}{|\p_{1x}^0|} u_{2x}^0 + \frac{\sigma_{1,2}^0}{|\p_{2x}^0|} u_{4x}^0 + \frac{\sigma_{2,3}^0}{|\p_{3x}^0|} u_{6x}^0  \bigg) \label{compatibility2}
    \end{align}
    at $x=0$ and
\begin{alignat}{2}
    \p_j^0 &= \x_j, \qquad &&j=1,2,3 \nonumber\\
    \p_{jxx}^0 &= \mathbf{0}, \qquad && j=1,2,3 \label{compatibility3}
\end{alignat}
at $x=1$.
\end{definition}

\noindent Before stating our main result, we first introduce helpful notation.
Let $Q_T \coloneqq [0,1]\times[0,T]$.
We will use the parabolic H\"{o}lder spaces (see \cite{ladyzhenskaia} Ch.1 p.8) $C^{2+\alpha , 1+\alpha/2}(Q_T)$  with the norm $|\cdot|_{Q_T}^{(2+\alpha , 1+\alpha/2)}$. For a function $u(x,t)$, this norm is defined by 
\begin{align}
    |u|_{Q_T}^{(2+\alpha , 1+ \alpha/2)} &\coloneqq |u|_{Q_T}^{(0)} + |u_x|_{Q_T}^{(0)} + |u_{xx}|_{Q_T}^{(0)} + |u_t|_{Q_T}^{(0)} \\
    &+ \langle u_{xx} \rangle_{x,Q_T}^{(\alpha)} + \langle u_t \rangle_{x,Q_T}^{(\alpha)}  + \langle u_x \rangle_{t,Q_T}^{((1+\alpha)/2)} + \langle u_{xx} \rangle_{t,Q_T}^{(\alpha/2)} + \langle u_t \rangle_{t,Q_T}^{(\alpha/2)}  
\end{align}
where
\begin{align}
    |u|_{Q_T}^{(0)} &\coloneqq \max_{(x,t)\in Q_T}|u(x,t)|\\
     \langle u \rangle_{x,Q_T}^{(\alpha)} & \coloneqq \sup_{(x,t),(y,t)\in Q_T,\; x\neq y} \frac{|u(x,t) - u(y,t)|}{|x-y|^\alpha}\\
     \langle u \rangle_{t,Q_T}^{(\alpha)} & \coloneqq \sup_{(x,s),(x,t)\in Q_T, \; s\neq t} \frac{|u(x,s) - u(x,t)|}{|s-t|^\alpha}.
\end{align}
For a function $f(z)$, $z\in\mathcal{D}$ of a single variable, we will use the notation
\begin{align}
    \langle f\rangle^{(\alpha)} &\coloneqq \sup_{z,w\in\mathcal{D}. \; z\neq w}\frac{|f(z) - f(w)|}{|z-w|^\alpha}\\
    |f|^{(k+\alpha)} & \coloneqq \sum_{j=0}^k |f^{(j)}|^{(0)} + \langle f^{(k)}\rangle^{(\alpha)} = \sum_{j=0}^k \max_{z\in\mathcal{D}}|f^{(j)}(z)| + \langle f^{(k)}\rangle^{(\alpha)}
\end{align}
to denote the usual H\"{o}lder seminorm and H\"{o}lder norm respectively. We will often omit the subscript indicating the domain whenever the intention is clear.

\begin{theorem}[Parametric well-posedness]\label{maintheorem}
    Let $\alpha\in (0,1)$, $u_j^0\in C^{2+\alpha}([0,1])$, $j=1,\ldots,6$. For $j=1,2,3$, let $\theta_j^0 \in [0,2\pi)$ be the initial orientation of grain $j$. 
    Suppose that the initial data $(\{u_j^0(x)\}_{j=1}^6 , \{\theta_j^0\}_{j=1}^3)$ satisfies the compatibility conditions for \eqref{pde}. Assume that
    \begin{align}
        \delta \coloneqq \min_{j=1,2,3} \inf_{x\in[0,1]} |\p_{jx}^0 (x)|>0.
    \end{align}
    Let $\sigma\in C^{1}(\R\setminus 2\pi\mathbb{Z} \;; [0,+\infty)) \cap C^0(\R \; ; [0,+\infty))$ be a non-negative function satisfying \eqref{S1}, \eqref{S2} and \eqref{S3}. Further assume that $\sigma$ and $\sigma'$ are Lipschitz with Lipschitz constants $Lip(\sigma), \; Lip(\sigma')$ respectively. Suppose that
    \begin{align}
        \min_{j=1,2,3} \sigma_{j-1,j}^0 = \min_{j=1,2,3} \sigma(\theta_{j-1}^0 - \theta_j^0) > 0.
    \end{align}
    Then there exists $M = M(|u_j^0|^{(2+\alpha)},|x_j|,\theta_j^0 , \mu , \sigma)>0$ and 
    $T = T(|u_j^0|^{(2+\alpha)},|x_j|,\theta_j^0,\delta,\mu,\nu,\sigma) > 0$ such that the system \eqref{pde} admits a unique solution
    \begin{align*}
        (\{u_j(x,t)\}_{j=1}^6 , \{\theta_j(t)\}_{j=1}^3) \in \big(C^{2+\alpha, 1+\alpha/2}([0,1]\times[0,T])\big)^6 \times \big(C^1([0,T])\big)^3, 
    \end{align*}
    with
    \begin{align*}
        |u_j|_{Q_T}^{(2+\alpha, 1+\alpha/2)},\; |\theta_k|^{(\alpha)} \leq M, \qquad j=1,\ldots, 6, \; k=1,2,3.
    \end{align*}
\end{theorem}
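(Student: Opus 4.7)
The plan is to adapt the strategy of Bronsard \& Reitich \cite{bronsardreitich}: linearize \eqref{pde} by freezing coefficients, solve the resulting linear initial-boundary value problem in parabolic H\"older spaces via Solonnikov-type theory, and close the argument with a fixed-point iteration on a short time interval. Given a candidate $(\bar u_j, \bar\theta_j)$ in a closed ball $\mathcal{B}_M$ of $(C^{2+\alpha, 1+\alpha/2}(Q_T))^6 \times (C^1([0,T]))^3$ with the correct initial data, I would define frozen coefficients $\bar a_j(x,t) := \bar\sigma_{j-1,j}(t) / |\bar\p_{jx}(x,t)|^2$ and analogous traces at $x=0$, then substitute them into the PDEs and into the dynamic boundary condition to obtain a linear system for the unknown $u_j$. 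The ODE for $\theta_j$ becomes a decoupled Lipschitz initial value problem once $|\bar \p_{jx}|$ is substituted into its integrands.

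The linear PDE system is parabolic, coupled only through the dynamic boundary condition at $x=0$ with Dirichlet data at $x=1$. I would invoke the parabolic H\"older theory of \cite{ladyzhenskaia} for systems with Lopatinskii-Shapiro boundary conditions. The dynamic condition fits this framework precisely because $\mu > 0$: the $\mu\, u_{jt}$ term renders the boundary operator of the correct order with respect to parabolic scaling, and one recovers the quasi-static Herring/Neumann situation of \cite{bronsardreitich} in the limit $\mu \to 0$. The compatibility relations in Definition \ref{def:parcomp} are exactly those required so that the frozen-coefficient problem admits a solution in $C^{2+\alpha, 1+\alpha/2}$ without loss of regularity at $t=0$. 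The resulting estimate would have the schematic form
\[ \sum_j |u_j|_{Q_T}^{(2+\alpha, 1+\alpha/2)} \leq C_1 \sum_j |u_j^0|^{(2+\alpha)} + C_2(M)\,T^{\gamma} \]
for some $\gamma > 0$, with $C_1$ independent of $M$ and $T$. For the ODE, Picard iteration produces $\theta_j \in C^1([0,T])$ once we verify that $\theta_{j-1}-\theta_j$ stays in a compact subset of $\mathbb{R}\setminus 2\pi\mathbb{Z}$ on $[0,T]$, which holds for short time since $\sigma(\theta_{j-1}^0 - \theta_j^0) > 0$ by hypothesis.

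Putting these together, I would show that the map $\Phi: (\bar u, \bar\theta) \mapsto (u, \theta)$ sends $\mathcal{B}_M$ into itself for $M$ sufficiently large (depending on the initial data) and $T$ sufficiently small (depending on $M$, $\delta$, $\mu$, $\nu$, and $\sigma$), and that it is a contraction in a slightly weaker norm, such as parabolic H\"older with exponent $\alpha' < \alpha$. Self-mapping also requires that the nondegeneracy bound $\inf_x |\p_{jx}(\cdot,t)| \geq \delta/2$ and the positivity bound on the surface tensions $\sigma_{j-1,j}$ propagate from $t=0$, both of which follow from continuity and smallness of $T$. Contraction follows from applying the linear estimate to the difference of two iterates and absorbing the differences of frozen coefficients via the factor $T^\gamma$. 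The unique fixed point is the desired solution, and uniqueness in the full function class follows from a standard continuation argument.

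The main obstacle in this program is the verification of the Lopatinskii-Shapiro complementary condition for the dynamic boundary operator at $x=0$, which simultaneously couples all six scalar components via a nonstandard first-order-in-time boundary condition: one must compute the boundary symbol and verify invertibility of the resulting matrix for all admissible dual frequencies. A subsidiary technical point is the precise dependence of the constants $C_1, C_2$ on the bounds of the frozen coefficients $\bar a_j$ and $\bar b_j$, which is what ultimately determines the power $T^\gamma$ and hence the size of the guaranteed existence time. Once this linear input is in place, the remainder of the argument is standard parabolic fixed-point machinery.
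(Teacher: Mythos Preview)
Your overall strategy---linearize, solve the linear problem in parabolic H\"older spaces, close by a fixed-point argument---matches the paper's, and the outline would succeed. But the paper takes a materially simpler route at exactly the point you flag as the main obstacle. Rather than treating the dynamic boundary condition as a Wentzell-type coupled boundary operator and verifying Lopatinskii--Shapiro for the system, the paper freezes the \emph{entire} right-hand side of the triple-junction condition at the iterate $(\bar u,\bar\theta)$ and integrates in $t$, obtaining
\[
u_1(0,t)=u_3(0,t)=u_5(0,t)=u_1^0(0)+\frac{1}{\mu}\int_0^t\Big(\bar\sigma_{3,1}\frac{\bar u_{1x}}{|\bar\p_{1x}|}+\bar\sigma_{1,2}\frac{\bar u_{3x}}{|\bar\p_{2x}|}+\bar\sigma_{2,3}\frac{\bar u_{5x}}{|\bar\p_{3x}|}\Big)\,dr
\]
(and similarly for the even components). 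This turns the boundary condition into pure Dirichlet data with known $C^{1+\alpha/2}$ regularity, so the six scalar equations \emph{decouple} and only scalar Schauder theory from \cite{ladyzhenskaia} is needed---no complementarity condition at all. The paper also freezes the principal part at the \emph{initial} data $\sigma^0_{j-1,j}/|\p_{jx}^0|^2$ rather than at the iterate, pushing the difference into the source term $f_j$; this keeps the Schauder constant independent of $M$ and $T$. Two further minor differences: the paper runs the contraction in the \emph{same} $C^{2+\alpha,1+\alpha/2}$ norm (the required smallness comes from explicit $T^{\alpha/2}$ factors in the estimates, not from interpolation to a weaker exponent), and the $\theta_j$ equation is folded directly into the fixed-point map as an integral rather than solved separately by Picard. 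Your approach would work but requires the boundary-symbol computation you correctly anticipate as delicate; the paper's trick of integrating the drag condition buys a complete bypass of that difficulty.
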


Conditions given in Definition \ref{def:parcomp}
are the zeroth and first order compatibility conditions in \cite{ladyzhenskaia}. Note that the first order conditions \eqref{compatibility1} and \eqref{compatibility2} depend on the choice of parametrization $\p_j(\cdot,t)$. This means that in order to determine if an initial (unparametrized) network is compatible with the parametric problem, one must find a particular parametrization of this network that satisfies the parametric compatibility conditions. It is not immediately clear whether such a parametrization exists. Thus, it is natural to look for a set of geometric compatibility conditions on the initial curves (and initial grain orientations) that will guarantee the existence of a parametrization satisfying the parametric compatibility conditions.

\begin{definition}[Geometric compatibility conditions]
    Given initial grain orientations $\theta_j^0$, we say that an initial network $\mathcal{N}^0$, composed of curves $\Gamma_j^0\subset\overline{\Omega}$, satisfies the \textit{geometric compatibility conditions} if the curvatures satisfy
    \begin{align}\label{geocompatibility1}
         \sigma_{j-1,j}^0\kappa_j^0 = \frac{1}{\mu}\bigg( \sum_{k=1}^3 \sigma_{k-1,k}^0 \bftau_k^0 \bigg)\cdot\bfnu_j^0
    \end{align}
    at the triple junction,
    \begin{align}\label{geocompatibility2}
        \kappa_j^0 = 0
    \end{align}
    at the endpoints $\x_j$, and each curve $\Gamma_j^0$ can be parametrized by $\q_j^0 \in C^{2+\alpha}([0,1];\overline{\Omega})$ with $|\q_{jx}^0(x)|\neq 0$ for all $x\in[0,1]$. Here, $\bftau_k^0$ is the unit tangent of $\Gamma_k^0$ at the triple junction.
\end{definition}

\begin{lemma}[Equivalence of parametric and geometric compatibility     conditions]\label{equivalencecompatibility}
    A $C^{2+\alpha}$ initial network $\mathcal{N}^0$, composed of curves $\Gamma_j^0\subset\overline{\Omega}$ satisfies the geometric compatibility conditions if and only if there exist parametrizations $\p_j^0 \in C^{2+\alpha}([0,1];\overline{\Omega})$ with $|\p_{jx}^0(x)|\neq 0$ for all $x\in[0,1]$ and satisfying the parametric compatibility conditions.
\end{lemma}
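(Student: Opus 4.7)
My plan is to prove both directions separately, with the main work in the geometric-to-parametric direction.

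\textbf{Easy direction (parametric $\Rightarrow$ geometric).} Assuming $\{\p_j^0\}$ satisfies Definition \ref{def:parcomp}, I would observe that identity \eqref{eq:kvector} together with $|\p_{jx}^0|\neq 0$ turns the endpoint condition $\p_{jxx}^0(1)=\mathbf{0}$ into $\bfkappa_j^0(1)=\mathbf{0}$, i.e.\ \eqref{geocompatibility2}. At the triple junction, taking the $\bfnu_j^0$-component of the first-order parametric condition and using \eqref{eq:kvector} to discard the tangential part of $\p_{jxx}^0/|\p_{jx}^0|^2$ recovers \eqref{geocompatibility1} immediately.

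\textbf{Main direction (geometric $\Rightarrow$ parametric).} Starting from the given regular $C^{2+\alpha}$ parametrizations $\q_j^0$, I would seek $\p_j^0(x):=\q_j^0(\phi_j(x))$ with $\phi_j\in C^{2+\alpha}([0,1];[0,1])$, $\phi_j(0)=0$, $\phi_j(1)=1$, $\phi_j'>0$. The chain rule yields
\begin{align*}
\frac{\p_{jxx}^0(x)}{|\p_{jx}^0(x)|^2}=\bfkappa_j^0(\phi_j(x))+\alpha_j(x)\,\bftau_j^0(\phi_j(x)),
\end{align*}
where $\alpha_j(x)$ is affine in $\phi_j''(x)/(\phi_j'(x))^2$ with coefficients depending only on $\q_j^0$. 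Two structural observations will be central: first, the normal component of the left-hand side is reparametrization-invariant; second, the tangential coefficient $\alpha_j$ at any fixed point may be assigned an arbitrary real value by choice of $\phi_j''$ there, with $\phi_j'>0$ otherwise free.

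Armed with these, I would verify that the parametric conditions decouple at each endpoint into their normal and tangential components. At $x=0$, the normal part is exactly \eqref{geocompatibility1} (assumed), while the tangential part is, for each $j$, a single scalar equation that uniquely determines $\phi_j''(0)$ once $\phi_j'(0)>0$ is fixed. At $x=1$, the hypothesis $\bfkappa_j^0(1)=\mathbf{0}$ gives $\q_{jxx}^0(1)\parallel\q_{jx}^0(1)$, so $\p_{jxx}^0(1)=\mathbf{0}$ translates into a single linear equation that uniquely determines $\phi_j''(1)$ once $\phi_j'(1)>0$ is fixed.

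The last step, and the main (but mild) technical obstacle, will be to construct $\phi_j\in C^{2+\alpha}([0,1];[0,1])$ that is strictly increasing and realizes the prescribed boundary $2$-jets. A clean choice is to set $\phi_j'(0)=\phi_j'(1)=1$, which in turn fixes $\phi_j''(0)=b_j^0$ and $\phi_j''(1)=b_j^1$ via the scalar equations above, and then take
\begin{align*}
\phi_j(x):=x+\tfrac{b_j^0}{2}x^2\chi_\delta(x)+\tfrac{b_j^1}{2}(1-x)^2\chi_\delta(1-x),
\end{align*}
where $\chi_\delta\in C^\infty([0,1];[0,1])$ is a smooth cutoff equal to $1$ near $0$ and vanishing on $[\delta,1]$. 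Direct inspection confirms the boundary data, and a routine bound yields $|\phi_j'-1|\leq C(|b_j^0|+|b_j^1|)\delta$, so choosing $\delta$ small enough produces $\phi_j'>0$ on $[0,1]$ and completes the construction.
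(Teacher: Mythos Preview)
Your proposal is correct and follows essentially the same approach as the paper: both directions match, and in the geometric-to-parametric direction you reparametrize via $\p_j^0=\q_j^0\circ\phi_j$, observe that the normal components of the parametric conditions are forced by \eqref{geocompatibility1}--\eqref{geocompatibility2}, and choose $\phi_j'(0)=\phi_j'(1)=1$ so that the tangential components dictate $\phi_j''(0)$ and $\phi_j''(1)$. The only difference is that the paper simply asserts the existence of diffeomorphisms with the required boundary $2$-jets, whereas you supply an explicit cutoff construction---a welcome addition that fills in a detail the paper leaves to the reader.
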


\begin{proof}
Such an equivalence between parametric and geometric compatibility has been established in \cite{Mantegazza2019} for the Herring angle condition. We follow a similar approach for triple junction drag.

Suppose the curves $\Gamma_j^0$ can be parametrized by $\p_j^0 \in C^{2+\alpha}([0,1];\overline{\Omega})$ with $|\p_{jx}^0(x)|\neq 0$ for all $x\in[0,1]$ and satisfying the parametric compatibility conditions. Then \eqref{compatibility1} and \eqref{compatibility2} imply that
\begin{align*}
    \sigma_{j-1,j}^0 \frac{\p_{jxx}^0}{|\p_{jx}^0|^2} = \frac{1}{\mu}\sum_{k=1}^3 \sigma_{k-1,k}^0 \frac{\p_{kx}^0}{|\p_{kx}^0|} = \frac{1}{\mu}\sum_{k=1}^3 \sigma_{k-1,k}^0 \bftau_k^0, \qquad j=1,2,3\\
    \implies \sigma_{j-1,j}^0\kappa_j^0 = \sigma_{j-1,j}^0 \frac{\p_{jxx}^0}{|\p_{jx}^0|^2} \cdot \bfnu_j^0 = \frac{1}{\mu}\bigg(\sum_{k=1}^3 \sigma_{k-1,k}^0 \bftau_k^0\bigg)\cdot\bfnu_j^0 , \qquad j=1,2,3
\end{align*}
at the triple junction. Meanwhile, \eqref{compatibility3} implies that
\begin{align*}
    \kappa_j^0 = \frac{\p_{jxx}^0}{|\p_{jx}^0|^2}\cdot\bfnu_j^0 = 0
\end{align*}
at the endpoint $\x_j$.

Conversely, suppose that the curves $\Gamma_j^0$, parametrized by $\q_j^0(x)$, $x\in[0,1]$ satisfy the geometric compatibility conditions. We will construct reparametrizations $\p_j^0(x)\coloneqq \q_j^0(\varphi_j(x))$ satisfying the parametric compatibility conditions. 

Let $\varphi_j:[0,1]\to[0,1]$ be diffeomorphisms satisfying
\begin{align*}
    \varphi_j(0) &= 0 \\
    \varphi_j(1) &= 1 \\
    \varphi_{jx}(0) = \varphi_{jx}(1) &= 1\\
    \varphi_{jx}(x) &> 0, \qquad \forall x\in[0,1]\\
    \varphi_{jxx}(0) &= |\q_{jx}^0|\bigg[ \frac{1}{\mu \sigma_{j-1,j}^0}\bigg( \sum_{k=1}^3 \sigma_{k,k-1}^0\frac{\q_{kx}^0}{|\q_{kx}^0|} \bigg)\cdot\frac{\q_{jx}^0}{|\q_{jx}^0|} - \frac{\q_{jxx}^0\cdot\q_{jx}^0}{|\q_{jx}^0|^3} \bigg]\bigg|_{x=0}\\
    \varphi_{jxx}(1) &= - \frac{\q_{jxx}^0\cdot\q_{jx}^0}{|\q_{jx}^0|^2} \bigg|_{x=1}
\end{align*}
One can check that this choice of reparametrization satisfies
\begin{align}\label{lamda}
    \sigma_{j-1,j}^0\frac{\p_{jxx}^0\cdot\p_{jx}^0}{|\p_{jx}^0|^3} = \frac{1}{\mu}\bigg( \sum_{k=1}^3 \sigma_{k,k-1}^0\bftau_k^0 \bigg)\cdot\bftau_j^0
\end{align}
at $x=0$ (i.e. triple junction) and
\begin{align*}
    \frac{\p_{jxx}^0\cdot\p_{jx}^0}{|\p_{jx}^0|^3} = 0
\end{align*}
at $x=1$ (i.e. endpoints). Thus, at $x=1$, we have
\begin{align*}
    \frac{\p_{jxx}^0}{|\p_{jx}^0|^2} &= \bigg(\frac{\p_{jxx}^0}{|\p_{jx}^0|^2}\cdot \bfnu_j^0\bigg)\bfnu_j^0 + \bigg(\frac{\p_{jxx}^0}{|\p_{jx}^0|^2}\cdot \bftau_j^0\bigg)\bftau_j^0\\
    &= \kappa_j^0 \bfnu_j^0 + \bigg(\frac{\p_{jxx}^0\cdot\p_{jx}^0}{|\p_{jx}^0|^3}\bigg)\bftau_j^0\\
    &= 0,
\end{align*}
satisfying \eqref{compatibility3}. At $x=0$, we have
\begin{align*}
    \sigma_{j-1,j}^0\frac{\p_{jxx}^0}{|\p_{jx}^0|^2} &= \sigma_{j-1,j}^0\bigg(\frac{\p_{jxx}^0}{|\p_{jx}^0|^2}\cdot \bfnu_j^0\bigg)\bfnu_j^0 + \sigma_{j-1,j}^0\bigg(\frac{\p_{jxx}^0}{|\p_{jx}^0|^2}\cdot \bftau_j^0\bigg)\bftau_j^0\\
    &= \bigg(\frac{1}{\mu}\bigg( \sum_{k=1}^3 \sigma_{k-1,k}^0 \bftau_k^0 \bigg)\cdot\bfnu_j^0\bigg) \bfnu_j^0 + \bigg(\frac{1}{\mu}\bigg( \sum_{k=1}^3 \sigma_{k,k-1}^0\bftau_k^0 \bigg)\cdot\bftau_j^0\bigg)\bftau_j^0, \qquad \text { by }\eqref{geocompatibility1},\eqref{lamda}\\
    &= \frac{1}{\mu}\sum_{k=1}^3 \sigma_{k-1,k}^0 \bftau_k^0 ,
\end{align*}
satisfying \eqref{compatibility1}, \eqref{compatibility2}.
\end{proof}

\section{Proof of Theorem \ref{maintheorem}}
To prove Theorem \ref{maintheorem}, we will follow the approach of Bronsard \& Reitich in \cite{bronsardreitich}, which involves a linearization of \eqref{pde} about the initial data and a fixed-point argument. 

Let 
\begin{alignat*}{2}
    X_j &\coloneqq \{ u\in C^{2+\alpha,1+\alpha/2}(Q_T)\;:\; |u|_{Q_T}^{(2+\alpha , 1+\alpha/2)}\leq M,\; u(x,0) = u_j^0(x)\},\qquad &&j=1,\ldots,6 \\
    Y_j &\coloneqq \{ \theta\in C^\alpha([0,T]) \;:\;  |\theta|^{(\alpha)} \leq M, \; \theta(0) = \theta_j^0 \},\qquad &&j=1,2,3 
\end{alignat*}

We define an operator
\begin{align*}
    \mathcal{R}: \prod_{j=1}^6 X_j \times \prod_{j=1}^3 Y_j &\to \prod_{j=1}^6 X_j \times \prod_{j=1}^3 Y_j \\
    (\overline{\u},\overline{\bftheta}) = ((\overline{u}_j)_{j=1}^6 , (\overline{\theta}_j)_{j=1}^3) &\mapsto \mathcal{R}(\overline{\u},\overline{\bftheta})
\end{align*}
where $(\u,\bftheta) = \mathcal{R}(\overline{\u},\overline{\bftheta})$ is the unique solution to the linearized system
\begin{align}\label{linearized}
    u_{jt} - D_j u_{jxx} &= f_j \nonumber \\
    u_j(x,0) &= u_j^0(x) \nonumber \\
    \theta_j(t) &= \theta_j^0 + \int_0^t \nu\bigg[\sigma'(\overline{\theta}_{j-1}(s) -  \overline{\theta}_{j}(s))\int_0^1 |\overline{\p}_{jx}(x,s)|\;dx   \\
    &\qquad\qquad -\sigma'(\overline{\theta}_{j}(s) -  \overline{\theta}_{j+1}(s))\int_0^1 |\overline{\p}_{(j+1)x}(x,s)|\;dx \bigg]\;ds \nonumber 
\end{align}
with
\begin{align*}
    D_j &= \begin{cases}
        \frac{ \sigma_{3,1}^0}{|\p_{1x}^0|^2} &\text{ for }j=1,2\\
        \frac{ \sigma_{1,2}^0}{|\p_{2x}^0|^2} &\text{ for }j=3,4\\
        \frac{ \sigma_{2,3}^0}{|\p_{3x}^0|^2} &\text{ for }j=5,6
    \end{cases}\\
    f_j &= \begin{cases}
        \bigg( \frac{\overline{\sigma}_{3,1}}{|\overline{\p}_{1x}|^2} -\frac{\sigma_{3,1}^0}{|\p_{1x}^0|^2} \bigg)\overline{u}_{jxx} &\text{ for }j=1,2\\
        \bigg( \frac{\overline{\sigma}_{1,2}}{|\overline{\p}_{2x}|^2} -\frac{\sigma_{1,2}^0}{|\p_{2x}^0|^2} \bigg)\overline{u}_{jxx} &\text{ for }j=3,4\\
        \bigg( \frac{\overline{\sigma}_{2,3}}{|\overline{\p}_{3x}|^2} -\frac{\sigma_{2,3}^0}{|\p_{3x}^0|^2} \bigg)\overline{u}_{jxx} &\text{ for }j=5,6.
    \end{cases}
\end{align*}
Here, we use the notation
\begin{align*}
    \sigma_{j,k}^0 \coloneqq \sigma(\theta_j^0 - \theta_k^0)\\
    \overline{\sigma}_{j,k}\coloneqq \sigma(\overline{\theta}_j - \overline{\theta}_k).
\end{align*}
The system above is subject to the linearized boundary conditions

\begin{alignat*}{3}
    u_1(0,t) = u_3(0,t) = u_5(0,t) &= u_1^0(0) + \frac{1}{\mu}\int_0^t \bigg( \overline{\sigma}_{3,1}(r)\frac{\overline{u}_{1x}(0,r)}{|\overline{\p}_{1x}(0,r)|} + \overline{\sigma}_{1,2}(r)\frac{\overline{u}_{3x}(0,r)}{|\overline{\p}_{2x}(0,r)|} + \overline{\sigma}_{2,3}(r)\frac{\overline{u}_{5x}(0,r)}{|\overline{\p}_{3x}(0,r)|}\bigg) \;dr\\
    &\eqqcolon\Phi_{\overline{\p},\overline{\bftheta}}(t) \\
    u_2(0,t) = u_4(0,t) = u_6(0,t) &= u_2^0(0) + \frac{1}{\mu}\int_0^t \bigg( \overline{\sigma}_{3,1}(r)\frac{\overline{u}_{2x}(0,r)}{|\overline{\p}_{1x}(0,r)|} + \overline{\sigma}_{1,2}(r)\frac{\overline{u}_{4x}(0,r)}{|\overline{\p}_{2x}(0,r)|} + \overline{\sigma}_{2,3}(r)\frac{\overline{u}_{6x}(0,r)}{|\overline{\p}_{3x}(0,r)|}\bigg) \;dr\\
    &\eqqcolon\Psi_{\overline{\p},\overline{\bftheta}}(t) \\
    \big( u_1(1,t) , u_2(1,t) \big) &\equiv \mathbf{x}_1, \\
    \big( u_3(1,t) , u_4(1,t) \big) &\equiv \mathbf{x}_2, \\
    \big( u_5(1,t) , u_6(1,t) \big) &\equiv \mathbf{x}_3 
\end{alignat*}

\subsection{Outline of Proof of Theorem \ref{maintheorem}}

\underline{\textbf{Step 1:}} We first show that there exists a unique solution to the linearized system \eqref{linearized}. Note that in \eqref{linearized}, the third equation already defines $\theta_j(t)$ via the fundamental theorem of calculus. Thus, we only need to prove the existence and uniqueness of the solution $u_j$ using parabolic PDE theory. In contrast to \cite{bronsardreitich}, our (Dirichlet) boundary conditions for $u_j$, $j=1,\ldots,6$ are \emph{decoupled} and hence we do not need to verify the complementarity conditions for parabolic \emph{systems} \cite{solonnikov}. Instead, we only invoke the theory for \emph{scalar} linear parabolic PDEs to guarantee existence and uniqueness of solutions, together with a Schauder-type estimate of the form
\begin{align}\label{solonnikov}
    \sum_{j=1}^6 |u_j|_{Q_T}^{(2+\alpha , 1+\alpha/2)} &\leq C_S \bigg( \sum_{j=1}^6 |f_j|_{Q_T}^{(\alpha , \alpha/2)} + \sum_{j=1}^6|u_j^0|^{(2+\alpha)} + |\Phi_{\overline{\p},\overline{\bftheta}}|^{(1+\alpha/2)}\nonumber \\ 
    &+ |\Psi_{\overline{\p},,\overline{\bftheta}}|^{(1+\alpha/2)} + \sum_{j=1}^3 \sum_{k=1}^2 |x_{j,k}| \bigg)
\end{align}
which will be useful later for showing that $\mathcal{R}$ is a contraction mapping.\\

\underline{\textbf{Step 2:}} Next, we need to show that the operator $\mathcal{R}$ is well-defined. This amounts to finding suitable conditions on the parameters (i.e. $\delta, M, T,\ldots$) which will ensure that if 
\begin{alignat*}{2}
    |\overline{u}_j|_{Q_T}^{(2+\alpha , 1+\alpha/2)}&\leq M, \qquad &&j=1,\ldots,6\\
    |\overline{\theta}_j|^{(\alpha)}&\leq M, \qquad && j=1,2,3
\end{alignat*}
then
\begin{alignat*}{2}
    |u_j|_{Q_T}^{(2+\alpha , 1+\alpha/2)}&\leq M, \qquad &&j=1,\ldots,6\\
    |\theta_j|^{(\alpha)}&\leq M, \qquad && j=1,2,3.
\end{alignat*}

\underline{\textbf{Step 3:}} Lastly, we establish that $\mathcal{R}$ is a contraction mapping by considering the difference of two solutions. More precisely, let $(\overline{\u},\overline{\bftheta})$, $(\overline{\vv},\overline{\bfphi}) \in\prod_{j=1}^6 X_j \times \prod_{j=1}^3 Y_j$, and $(\u,\bftheta) = \mathcal{R}(\overline{\u},\overline{\bftheta})$, $(\vv,\bfphi) = \mathcal{R}(\overline{\vv},\overline{\bfphi})$. Denote the differences by $(\overline{\w},\overline{\bfvarphi}) = (\overline{\u}-\overline{\vv},\overline{\bftheta}- \overline{\bfphi})$ and $(\w,\bfvarphi) = (\u-\vv,\bftheta- \bfphi)$. We will show that $\mathcal{R}$ is a contraction mapping by showing that
\begin{align*}
    \sum_{j=1}^6 |w_j|_{Q_T}^{(2+\alpha , 1+\alpha/2)} + \sum_{j=1}^3 |\varphi_j|^{(\alpha)}\leq \frac{1}{2}\bigg(\sum_{j=1}^6 |\overline{w}_j|_{Q_T}^{(2+\alpha , 1+\alpha/2)} + \sum_{j=1}^3 |\overline{\varphi}_j|^{(\alpha)}\bigg)
\end{align*}
(by further restricting $T$ if necessary). This will be done by considering a similar system of parabolic equations satisfied by $\overline{\w}$ and using Schauder-type estimates.

\subsection{Step 1: Existence and uniqueness of solutions via linear parabolic theory}

The operators $\frac{\partial}{\partial t} - D_j \frac{\partial^2}{\partial x^2}$ are uniformly parabolic since
\begin{align*}
    0 < \frac{\min_{j=1,2,3} \sigma_{j-1,j}^0}{\max_{j=1,2,3}\sup_{x\in[0,1]} |\p_{jx}^0 (x)|} \leq  D_j \leq \frac{\max_{j=1,2,3} \sigma_{j-1,j}^0}{\min_{j=1,2,3}\inf_{x\in[0,1]} |\p_{jx}^0 (x)|} < \infty
\end{align*}
for all $j=1,\ldots,6$. Since the coefficients $D_j(x)$ are of class $C^\alpha([0,1])$ (by our assumptions on the initial data $\p_j^0(x)$) and the compatibility conditions in Definition \ref{def:parcomp} are satisfied, there exist unique solutions $u_j \in C^{2+\alpha , 1+\alpha/2}(Q_T)$, $j=1,\ldots, 6$. Moreover, these solutions collectively satisfy the estimate
\begin{align*}
    \sum_{j=1}^6 |u_j|_{Q_T}^{(2+\alpha , 1+\alpha/2)} \leq C_S \bigg( \sum_{j=1}^6 |f_j|_{Q_T}^{(\alpha , \alpha/2)} + \sum_{j=1}^6|u_j^0|^{(2+\alpha)} + |\Phi_{\overline{\p},\overline{\bftheta}}|^{(1+\alpha/2)} + |\Psi_{\overline{\p},\overline{\bftheta}}|^{(1+\alpha/2)} + \sum_{j=1}^3 \sum_{k=1}^2 |x_{j,k}| \bigg)
\end{align*}
(see \cite{ladyzhenskaia} Ch.IV Theorem 5.2).

\subsection{Step 2: $\mathcal{R}$ is well-defined}

We first record a few lemmas that will be used repeatedly in our subsequent estimates.
\begin{lemma}\label{lemma-zero}
    Let $0<\beta<1$ and $f\in C^\beta([0,T];\R^n)$ be such that $f(0) = 0$. Then
    \begin{align}
        |f|^{(0)} \leq T^\beta\langle f\rangle^{(\beta)}.
    \end{align}
\end{lemma}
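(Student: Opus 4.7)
The plan is essentially a one-line application of the Hölder seminorm definition together with the hypothesis $f(0)=0$. For any fixed $t \in [0,T]$, I would write $f(t) = f(t) - f(0)$ and then apply the definition of $\langle f \rangle^{(\beta)}$ to the pair of points $t$ and $0$, which yields
\[
|f(t)| = |f(t) - f(0)| \leq \langle f \rangle^{(\beta)} \, |t - 0|^{\beta} = t^\beta \, \langle f \rangle^{(\beta)} \leq T^\beta \, \langle f \rangle^{(\beta)}.
\]
Taking the supremum over $t \in [0,T]$ on the left-hand side gives $|f|^{(0)} \leq T^\beta \langle f \rangle^{(\beta)}$.

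There is no real obstacle here; the only mild subtlety is the vector-valued setting, but the Hölder seminorm $\langle \cdot \rangle^{(\beta)}$ is defined using $|\cdot|$, which one simply interprets as the Euclidean norm on $\R^n$, so the same estimate goes through coordinate-free. No additional assumptions beyond those stated (continuity, $\beta$-Hölder continuity with $0<\beta<1$, vanishing at $0$) are needed, and the bound is obviously sharp in the sense that the linear function $f(t) = c t^\beta$ (for $n=1$) essentially saturates it.
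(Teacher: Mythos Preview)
Your proof is correct and follows essentially the same approach as the paper: both use $f(0)=0$ to write $f(t)=f(t)-f(0)$ and then bound this difference by $\langle f\rangle^{(\beta)} |t|^\beta \leq T^\beta \langle f\rangle^{(\beta)}$. Your version is in fact slightly more direct, applying the H\"older seminorm estimate to the specific pair $(t,0)$ rather than first passing through the supremum over all pairs $(t,s)$ as the paper does.
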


\begin{proof}
    \begin{align*}
        \sup_{t\in[0,T]}|f(t)| &= \sup_{t\in[0,T]}|f(t) - f(0)|\\
        &\leq \sup_{t,s\in[0,T], \; t\neq s}|f(t)-f(s)|\\
        &=\sup_{t,s\in[0,T],\;t\neq s} \bigg\{\frac{|f(t)-f(s)||t-s|^\beta}{|t-s|^\beta}\bigg\}\\
        &\leq T^\beta\langle f\rangle^{(\beta)}
    \end{align*}
\end{proof}

\begin{lemma}\label{lemma-lipschitz}
    Let $U\subset \R^m$ be a bounded domain and $g\in C^\alpha(U;\R^n)$ with $0<\alpha<1$. Let $F:range(g)\to\R$ be a Lipschitz continuous function with Lipschitz constant $Lip(F)$. Then for any $0<\beta\leq\alpha$,
    \begin{align}
        \langle F\circ g\rangle^{(\beta)} \leq Lip(F)\;diam(U)^{\alpha-\beta} \langle f\rangle^{(\alpha)}.
    \end{align}
\end{lemma}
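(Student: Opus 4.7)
The plan is to chain the Lipschitz bound on $F$ with the Hölder bound on $g$, then convert the resulting $\alpha$-Hölder increment into a $\beta$-Hölder increment by absorbing the excess exponent into the diameter of the domain.

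First I would pick arbitrary $x,y \in U$ with $x \neq y$. Applying the Lipschitz bound for $F$ to the points $g(x), g(y) \in \text{range}(g)$ gives
\[
|F(g(x)) - F(g(y))| \leq Lip(F)\, |g(x) - g(y)|,
\]
and then invoking the definition of $\langle g \rangle^{(\alpha)}$ gives
\[
|F(g(x)) - F(g(y))| \leq Lip(F)\, \langle g \rangle^{(\alpha)} |x-y|^\alpha.
\]

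Next I would divide by $|x-y|^\beta$ and write $|x-y|^\alpha / |x-y|^\beta = |x-y|^{\alpha-\beta}$. Since $\alpha - \beta \geq 0$ and $|x-y| \leq \text{diam}(U)$, I can bound $|x-y|^{\alpha-\beta} \leq \text{diam}(U)^{\alpha-\beta}$ (the case $\alpha = \beta$ being trivial since the exponent is zero). Hence
\[
\frac{|F(g(x)) - F(g(y))|}{|x-y|^\beta} \leq Lip(F)\, \text{diam}(U)^{\alpha-\beta} \langle g \rangle^{(\alpha)}.
\]
Taking the supremum over all $x \neq y$ in $U$ yields the claimed inequality.

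There is no main obstacle here; this is an entirely routine estimate. The only subtle point is that the statement as written has $\langle f \rangle^{(\alpha)}$ on the right-hand side, which should presumably read $\langle g \rangle^{(\alpha)}$ since $f$ has not been introduced in the hypothesis. I would simply write the proof using $\langle g \rangle^{(\alpha)}$ throughout.
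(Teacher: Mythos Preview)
Your proof is correct and follows essentially the same approach as the paper's own proof: apply the Lipschitz bound, then the $\alpha$-H\"older bound on $g$, and absorb the factor $|x-y|^{\alpha-\beta}$ into $\mathrm{diam}(U)^{\alpha-\beta}$ before taking the supremum. Your observation about the typo is also correct; the paper's proof itself writes $f$ where $g$ is meant.
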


\begin{proof}
    \begin{align*}
        \langle F\circ g\rangle^{(\beta)} &= \sup_{x,y\in U, \; x\neq y} \bigg\{ \frac{|F(f(x)) - F(f(y))|}{|x-y|^\beta} \bigg\}\\
        &\leq \sup_{x,y\in U, \; x\neq y} \bigg\{ \frac{Lip(F)|f(x)-f(y)||x-y|^{\alpha-\beta}}{|x-y|^\beta |x-y|^{\alpha-\beta}} \bigg\}\\
        &\leq Lip(F)\;diam(U)^{\alpha-\beta} \langle f\rangle^{(\alpha)}
    \end{align*}
\end{proof}

\begin{lemma}\label{lemma-difference}
    Let $f,g\in C^{\beta}([0,T])$ for some $0<\beta<1$. Suppose $range(f),\; range(g) \subset [a,b]$ with $-\infty<a<b<+\infty$. Let $\sigma \in C^1([a,b])$ be such that $\sigma'$ is Lipschitz, with Lipschitz constant $Lip(\sigma')$.  
    
    Then 
    \begin{align*}
        \langle \sigma\circ f - \sigma\circ g \rangle^{(\beta)} \leq C_\sigma \bigg( \langle f-g\rangle^{(\beta)} + \langle g\rangle^{(\beta)}|f-g|^{(0)} \bigg)
    \end{align*}
\end{lemma}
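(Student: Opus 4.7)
The plan is to use the integral mean value theorem to rewrite the difference $\sigma\circ f - \sigma\circ g$ as a product of an averaged derivative and $f-g$, then apply a discrete product rule to pass to the $\beta$-H\"older seminorm. Concretely, set
\[
h(s) \coloneqq \int_0^1 \sigma'\bigl(g(s) + r(f(s) - g(s))\bigr)\,dr,
\]
so that $\sigma(f(s)) - \sigma(g(s)) = h(s)\,(f(s) - g(s))$ by the fundamental theorem of calculus (applied along the segment from $g(s)$ to $f(s)$, which lies in $[a,b]$ by the range hypothesis). The algebraic identity
\[
\bigl[\sigma(f(s))-\sigma(g(s))\bigr] - \bigl[\sigma(f(t))-\sigma(g(t))\bigr] = h(s)\bigl[(f{-}g)(s)-(f{-}g)(t)\bigr] + \bigl[h(s)-h(t)\bigr](f{-}g)(t)
\]
then reduces the problem to controlling $|h|^{(0)}$ and $\langle h\rangle^{(\beta)}$.

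For the pointwise bound, since the argument of $\sigma'$ lies in $[a,b]$, one has $|h|^{(0)} \leq \sup_{[a,b]}|\sigma'|$, which is finite because $\sigma' \in C^0([a,b])$. For the H\"older seminorm, the Lipschitz hypothesis on $\sigma'$ gives
\[
|h(s) - h(t)| \leq Lip(\sigma')\int_0^1 \bigl|(g(s)-g(t)) + r((f{-}g)(s) - (f{-}g)(t))\bigr|\,dr,
\]
and the triangle inequality followed by division by $|s-t|^\beta$ yields
\[
\langle h\rangle^{(\beta)} \leq Lip(\sigma')\bigl(\langle g\rangle^{(\beta)} + \langle f-g\rangle^{(\beta)}\bigr).
\]

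Substituting these bounds into the decomposition above and taking the supremum over $s \neq t$ produces
\[
\langle \sigma\circ f - \sigma\circ g\rangle^{(\beta)} \leq \sup_{[a,b]}|\sigma'|\cdot\langle f-g\rangle^{(\beta)} + Lip(\sigma')\bigl(\langle g\rangle^{(\beta)} + \langle f-g\rangle^{(\beta)}\bigr)|f-g|^{(0)}.
\]
The ``extra'' term $Lip(\sigma')\,\langle f-g\rangle^{(\beta)}\,|f-g|^{(0)}$ is absorbed into the $\langle f-g\rangle^{(\beta)}$ term using the a priori bound $|f-g|^{(0)} \leq b-a$ that follows from both ranges lying in $[a,b]$. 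This yields the stated inequality with a constant $C_\sigma$ depending only on $\sup_{[a,b]}|\sigma'|$, $Lip(\sigma')$, and $b-a$.

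I do not anticipate any real obstacle here; this is a standard H\"older composition estimate. The one delicate point is the additive split: it is chosen specifically so that only $\langle g\rangle^{(\beta)}$ (and not $\langle f\rangle^{(\beta)}$) multiplies $|f-g|^{(0)}$ in the final bound, which is exactly the form needed to estimate differences of solutions in Step 3 of the main proof, where $g$ will play the role of a ``reference'' function whose seminorm is controlled a priori.
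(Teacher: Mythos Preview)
Your proof is correct. The paper's argument is close in spirit but uses a slightly different decomposition: setting $H(a,b) = \sigma(a) - \sigma(b)$, it applies the (point) mean value theorem to $H(f(s),g(s)) - H(f(t),g(t))$ to obtain a point $c \in [0,1]$, then adds and subtracts $\sigma'\bigl((1-c)f(t)+cf(s)\bigr)\,(g(s)-g(t))$ so that the two resulting terms are bounded directly by $|\sigma'|^{(0)}\langle f-g\rangle^{(\beta)}$ and $Lip(\sigma')\,\langle g\rangle^{(\beta)}\,|f-g|^{(0)}$, with no cross term to absorb. The upshot is that the paper's constant $C_\sigma$ depends only on $|\sigma'|^{(0)}$ and $Lip(\sigma')$, whereas yours also picks up a factor of $b-a$ from absorbing the extra $Lip(\sigma')\,\langle f-g\rangle^{(\beta)}\,|f-g|^{(0)}$ term; both suffice for the lemma as stated, and your integral-mean-value-plus-product-rule route is arguably the more systematic one to write down.
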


\begin{proof}
    By definition of the H\"{o}lder seminorm,
    \begin{align*}
        \langle \sigma\circ f - \sigma\circ g \rangle^{(\beta)} &= \sup_{s\neq t}\bigg\{ \frac{1}{|s-t|^\beta}\big| [\sigma(f(s)) - \sigma(g(s))]  -  [\sigma(f(t) - \sigma(g(t))]  \big|\bigg\}\\
        &= \sup_{s\neq t}\bigg\{ \frac{1}{|s-t|^\beta}\big| h(f(s),g(s)) - h(f(t),g(t))  \big|\bigg\}\\
    \end{align*}

    where $h:[a,b]^2 \to \R$ is defined by
    \begin{align*}
        h(a,b) = \sigma(a) - \sigma(b)
    \end{align*}

    By the mean value theorem, there exists $c\in[0,1]$ such that
    \begin{align*}
        h(f(s),g(s)) - h(f(t),g(t)) &= (\nabla h)\bigg((1-c)(f(t),g(t)) + c(f(s),g(s))\bigg)\cdot \bigg( (f(s),g(s)) - (f(t),g(t)) \bigg)\\
        &= \sigma'\bigg( (1-c)f(t) + cf(s) \bigg)[f(s)-f(t)-g(s)+g(t)] \\
        &+ [g(s)-g(t)]\bigg[ \sigma'\bigg( (1-c)f(t) + cf(s) \bigg) - \sigma'\bigg( (1-c)g(t) + cg(s) \bigg)\bigg]
    \end{align*}

    Thus,

    \begin{align*}
        | h(f(s),g(s)) - h(f(t),g(t))  \big| &\leq |\sigma'|^{(0)} |f(s)-f(t)-g(s)+g(t)| \\
        &+ |g(s)-g(t)|Lip(\sigma') |(1-c)(f(t)-g(t)) + c(f(s)-g(s))| \\
        &\leq C_\sigma \bigg(|f(s)-f(t)-g(s)+g(t)| + |g(s)-g(t)||f-g|^{(0)} \bigg)
    \end{align*}

    and so

    \begin{align*}
        \langle \sigma\circ f - \sigma\circ g \rangle^{(\beta)} 
        &= \sup_{s\neq t}\bigg\{ \frac{1}{|s-t|^\beta}\big| h(f(s),g(s)) - h(f(t),g(t))  \big|\bigg\}\\
        &\leq \sup_{s\neq t}\bigg\{ \frac{1}{|s-t|^\beta} C_\sigma \bigg(|f(s)-f(t)-g(s)+g(t)| + |g(s)-g(t)||f-g|^{(0)} \bigg) \bigg\}\\
        &\leq C_\sigma \bigg( \langle f-g\rangle^{(\beta)} + \langle g\rangle^{(\beta)}|f-g|^{(0)} \bigg)
    \end{align*}
    
\end{proof}

We shall henceforth assume $T$ to be sufficiently small (depending on $M$ and $Lip(\sigma)$) so that 
\begin{align}
    |\overline{\p}_{jx}(x,t)|>\frac{\delta}{2}
\end{align}
for all $(x,t)\in[0,1]\times[0,T]$ and
\begin{align}
    \overline{\sigma}_{j,k}(t) > 0
\end{align}
for all $t\in[0,T]$. Also, as we are only interested in local existence, we may assume for simplicity that $T\leq1$ 
so that $T^\alpha \leq T^\beta$ whenever $\beta\leq\alpha$. In the following, the constants that depend on various parameters (e.g. $C_{\delta,M,\ldots}$) may change from line to line.

\subsubsection{Estimate $|f_j|_{Q_T}^{(\alpha , \alpha/2)}$}

We will only show that
\begin{align*}
    |f_1|_{Q_T}^{(\alpha , \alpha/2)} \leq 
    C_{\delta,M,\sigma} T^{\alpha/2}
\end{align*}
since $|f_j|_{Q_T}^{(\alpha , \alpha/2)}$, $j=2,\ldots,6$ satisfy similar estimates.\\

By definition of $f_1$, we have
\begin{align*}
    |f_1|_{Q_T}^{(\alpha , \alpha/2)} &= \bigg|\bigg( \frac{\overline{\sigma}_{3,1}}{|\overline{\p}_{1x}|^2} -\frac{\sigma_{3,1}^0}{|\p_{1x}^0|^2}\bigg)\overline{u}_{1xx}\bigg|_{Q_T}^{(\alpha , \alpha/2)}\\
    &\leq \bigg|\overline{\sigma}_{3,1}\bigg( \frac{1}{|\overline{\p}_{1x}|^2} -\frac{1}{|\p_{1x}^0|^2}\bigg)\overline{u}_{1xx}\bigg|_{Q_T}^{(\alpha , \alpha/2)}  + \bigg|\frac{1}{|\p_{1x}^0|^2}(\overline{\sigma}_{3,1} - \sigma_{3,1}^0)\overline{u}_{1xx}\bigg|_{Q_T}^{(\alpha , \alpha/2)}\\
    &\eqqcolon |I|_{Q_T}^{(\alpha , \alpha/2)} + |II|_{Q_T}^{(\alpha , \alpha/2)}
\end{align*}
Recall that 
\begin{align*}
    |\cdot|_{Q_T}^{(\alpha , \alpha/2)} = |\cdot|_{Q_T}^{(0)} + \langle \cdot \rangle_{x,Q_T}^{(\alpha)} + \langle \cdot \rangle_{t,Q_T}^{(\alpha/2)}. 
\end{align*}
We have
\begin{align*}
    |I|_{Q_T}^{(0)} \leq |\sigma|^{(0)} \bigg|\bigg( \frac{1}{|\overline{\p}_{1x}|^2} -\frac{1}{|\p_{1x}^0|^2}\bigg)\overline{u}_{1xx} \bigg|_{Q_T}^{(0)} \leq |\sigma|^{(0)} C_\delta M^2 T^{1/2} \leq C_{\delta,M,\sigma} T^{1/2}
\end{align*}
where the second inequality is estimate (43) of \cite{bronsardreitich}.\\

Since $\overline{\sigma}_{3,1}$ depends only on $t$, we have a similar estimate
\begin{align*}
    \langle I \rangle_{x,Q_T}^{(\alpha)} \leq C_{\delta,M,\sigma} T^{1/2}.
\end{align*}
Next,
\begin{align*}
    \langle I \rangle_{t,Q_T}^{(\alpha/2)} &\leq |\sigma|^{(0)} \bigg\langle  \bigg( \frac{1}{|\overline{\p}_{1x}|^2} -\frac{1}{|\p_{1x}^0|^2}\bigg)\overline{u}_{1xx}\bigg\rangle_{t,Q_T}^{(\alpha/2)} + \bigg| \bigg( \frac{1}{|\overline{\p}_{1x}|^2} -\frac{1}{|\p_{1x}^0|^2}\bigg)\overline{u}_{1xx}\bigg|_{Q_T}^{(0)}\langle  \overline{\sigma}_{3,1}\rangle^{(\alpha/2)}\\
    &= |\sigma|^{(0)} C_\delta M^2 T^{1/2} + C_\delta M^2 T^{1/2}\langle  \overline{\sigma}_{3,1}\rangle^{(\alpha/2)}.
\end{align*}
By Lemma \ref{lemma-lipschitz},
\begin{align}\label{sigma-estimate}
    \langle  \overline{\sigma}_{3,1}\rangle^{(\alpha/2)}\leq Lip(\sigma) \bigg( \langle \overline{\theta}_3 \rangle^{(\alpha
    )}  +  \langle \overline{\theta}_1 \rangle^{(\alpha
    )} \bigg)T^{\alpha/2}  \leq C_{\sigma}M T^{\alpha/2},
\end{align}
which gives us
\begin{align*}
    \langle I \rangle_{t,Q_T}^{(\alpha/2)} \leq C_{\delta,\sigma}( M^2 T^{1/2} + M^3 T^{(1+\alpha)/2} ).
\end{align*}

Collecting these estimates, we get
\begin{align*}
    |I|_{Q_T}^{(\alpha , \alpha/2)} \leq C_{\delta,M,\sigma} T^{1/2}.
\end{align*}

We estimate $|II|_{Q_T}^{(\alpha , \alpha/2)}$ in a similar way. Using Lemma \ref{lemma-zero} and Lemma \ref{lemma-lipschitz}, we get
\begin{align*}
    |II|_{Q_T}^{(0)} \leq C_\delta |\overline{\sigma}_{3,1} - \sigma_{3,1}^0|^{(0)} |\overline{u}_{1xx}|_{Q_T}^{(0)} \leq C_\delta Lip(\sigma) M T^\alpha M \leq C_{\delta,M,\sigma} T^\alpha
\end{align*}
and
\begin{align*}
     \langle II \rangle_{x,Q_T}^{(\alpha)} &\leq  |\overline{\sigma}_{3,1} - \sigma_{3,1}^0|^{(0)}  \bigg\langle \frac{1}{|\p_{1x}^0|^2}\overline{u}_{1xx}\bigg\rangle_{x,Q_T}^{(\alpha)}\\
     &\leq C_\sigma MT^\alpha \bigg\langle \frac{1}{|\p_{1x}^0|^2}\overline{u}_{1xx}\bigg\rangle_{x,Q_T}^{(\alpha)}.
\end{align*}

Note that
\begin{align*}
    \bigg\langle \frac{1}{|\p_{1x}^0|^2}\overline{u}_{1xx}\bigg\rangle_{x,Q_T}^{(\alpha)} &\leq \bigg| \frac{1}{|\p_{1x}^0|^2}\bigg|^{(0)} \langle\overline{u}_{1xx}\rangle_{x,Q_T}^{(\alpha)} + \bigg\langle \frac{1}{|\p_{1x}^0|^2}\bigg\rangle_{x,Q_T}^{(\alpha)} |\overline{u}_{1xx}|_{Q_T}^{(0)}\\
    &\leq C_{\delta,M} \qquad \text{ by Lemma \ref{lemma-lipschitz}}
\end{align*}

and so

\begin{align*}
    \langle II \rangle_{x,Q_T}^{(\alpha)} \leq C_{\delta,M,\sigma}T^\alpha.
\end{align*}

Lastly, by Lemma \ref{lemma-zero} and Lemma \ref{lemma-lipschitz},
\begin{align*}
    \langle II \rangle_{t,Q_T}^{(\alpha/2)} &\leq C_\delta \bigg( |\overline{\sigma}_{3,1} - \sigma_{3,1}^0|^{(0)} \langle\overline{u}_{1xx}\rangle_{t,Q_T}^{(\alpha/2)} + \langle \overline{\sigma}_{3,1} - \sigma_{3,1}^0 \rangle^{(\alpha/2)}|\overline{u}_{1xx}|_{Q_T}^{(0)}\bigg)\\
    &\leq C_\delta \bigg(C_{M,\sigma} T^\alpha + \langle \overline{\sigma}_{3,1} - \sigma_{3,1}^0 \rangle^{(\alpha/2)} M\bigg)\\
    &\leq C_{\delta,M,\sigma} T^{\alpha/2}.
\end{align*}

Thus,
\begin{align*}
    |f_1|_{Q_T}^{(\alpha , \alpha/2)} \leq 
    C_{\delta,M,\sigma} T^{\alpha/2}.
\end{align*}

\subsubsection{Estimate $|\Phi_{\overline{\p},\overline{\bftheta}}|^{(1+\alpha/2)}$}

We will show that
\begin{align*}
    |\Phi_{\overline{\p}, \overline{\bftheta}}|^{(1+\alpha/2)} \leq |u_1^0|^{(2+\alpha)} + \frac{3T|\sigma|^{(0)}}{\mu} + \frac{3|\sigma|^{(0)}}{\mu} + C_{\delta,\mu,\sigma} (MT^{\alpha/2} + M^2 T^{1/2}).
\end{align*}
In a similar way, we can also show that
\begin{align*}
    |\Psi_{\overline{\p}, \overline{\bftheta}}|^{(1+\alpha/2)} \leq |u_2^0|^{(2+\alpha)} + \frac{3T|\sigma|^{(0)}}{\mu} + \frac{3|\sigma|^{(0)}}{\mu} + C_{\delta,\mu,\sigma} (MT^{\alpha/2} + M^2 T^{1/2}).
\end{align*}

Since 
\begin{align*}
    \sup_{t\in[0,1]}\bigg|\overline{\sigma}_{3,1}(t)\frac{\overline{u}_{1x}(0,t)}{|\overline{\p}_{1x}(0,t)|} + \overline{\sigma}_{1,2}(t)\frac{\overline{u}_{3x}(0,t)}{|\overline{\p}_{2x}(0,t)|} + \overline{\sigma}_{2,3}(t)\frac{\overline{u}_{5x}(0,t)}{|\overline{\p}_{3x}(0,t)|}\bigg| \leq 3|\sigma|^{(0)},
\end{align*}
we get
\begin{align*}
    |\Phi'_{\overline{\p},\overline{\bftheta}}|^{(0)} &\leq \frac{3|\sigma|^{(0)}}{\mu} 
\end{align*}
and
\begin{align*}
    |\Phi_{\overline{\p},\overline{\bftheta}}|^{(0)} &\leq |u_1^0|^{(2+\alpha)}+ \frac{3T|\sigma|^{(0)}}{\mu}.\\
\end{align*}
Next,
\begin{align*}
    \langle \Phi'_{\overline{\p},\overline{\bftheta}}\rangle^{(\alpha/2)} \leq \frac{1}{\mu}\bigg(\bigg\langle \overline{\sigma}_{3,1}(t)\frac{\overline{u}_{1x}(0,t)}{|\overline{\p}_{1x}(0,t)|} \bigg\rangle^{(\alpha/2)} + \bigg\langle \overline{\sigma}_{1,2}(t)\frac{\overline{u}_{3x}(0,t)}{|\overline{\p}_{2x}(0,t)|} \bigg\rangle^{(\alpha/2)} + \bigg\langle \overline{\sigma}_{2,3}(t)\frac{\overline{u}_{5x}(0,t)}{|\overline{\p}_{3x}(0,t)|} \bigg\rangle^{(\alpha/2)}\bigg).
\end{align*}
We will only derive the estimates for the first term on the RHS as the last two terms can be treated similarly.
\begin{align*}
    \bigg\langle \overline{\sigma}_{3,1}\frac{\overline{u}_{1x}(0,t)}{|\overline{\p}_{1x}(0,t)|} \bigg\rangle^{(\alpha/2)} & \leq \langle \overline{\sigma}_{3,1} \rangle^{(\alpha/2)} \bigg| \frac{\overline{u}_{1x}(0,t)}{|\overline{\p}_{1x}(0,t)|} \bigg|^{(0)} + |\sigma|^{(0)} \bigg\langle \frac{\overline{u}_{1x}(0,t)}{|\overline{\p}_{1x}(0,t)|} \bigg\rangle^{(\alpha/2)}\\
    &\leq C_{\sigma}M T^{\alpha/2} + |\sigma|^{(0)} \bigg\langle \frac{\overline{u}_{1x}(0,t)}{|\overline{\p}_{1x}(0,t)|} \bigg\rangle^{(\alpha/2)} \qquad \text{ by \eqref{sigma-estimate}}\\
    &\leq C_{\sigma,M} T^{\alpha/2} + |\sigma|^{(0)} C_{\delta,M}  T^{1/2}\\
    &\leq C_{\delta,M,\sigma} T^{\alpha/2}
\end{align*}
where the second-last inequality follows from the fact that $|\overline{u}_{1x}|_{Q_T}^{(1+\alpha , (1+\alpha)/2)} \leq M$.

\subsubsection{Estimate $|\theta_j|^{(\alpha)}$}

Recall that $\theta_j$ satisfies the integral equation

\begin{align*}
    \theta_j(t) = \theta_j^0 + \int_0^t \nu\bigg[\sigma'(\overline{\theta}_{j-1}(s) -  \overline{\theta}_{j}(s))\int_0^1 |\overline{\p}_{jx}(x,s)|\;dx - \sigma'(\overline{\theta}_{j}(s) -  \overline{\theta}_{j+1}(s))\int_0^1 |\overline{\p}_{(j+1)x}(x,s)|\;dx \bigg]\;ds.
\end{align*}

Since 
\begin{align*}
    |\overline{\p}_{jx}|\leq 2M,
\end{align*}

we have

\begin{align*}
    |\theta_j|^{(0)} \leq |\theta_j^0| + 4T\nu Lip(\sigma) M.
\end{align*}

Also,

\begin{align*}
    \langle \theta_j \rangle^{(\alpha)} &\leq \sup_{t\neq s}\bigg\{ \frac{1}{|t-s|^\alpha} \int_s^t C_{M,\nu,\sigma}\;dr \bigg\}\\
    &\leq C_{M,\nu,\sigma} T^{1-\alpha}.
\end{align*}

Thus,

\begin{align*}
    |\theta_j|^{(\alpha)} \leq |\theta_j^0| + C_{M,\nu,\sigma} T^{1-\alpha}(T^\alpha + 1).
\end{align*}

Finally, if we let
\begin{align*}
    M\coloneqq 2C_S\bigg( \sum_{j=1}^6|u_j^0|^{ (2+\alpha)} + |u_1^0|^{(2+\alpha)} + |u_2^0|^{(2+\alpha)}+ \frac{6|\sigma|^{(0)}}{\mu} +
    \sum_{j=1}^3 \sum_{k=1}^2 |x_{j,k}|  \bigg) + 2\sum_{j=1}^3 |\theta_j^0|,
\end{align*}
we will have
\begin{align*}
    \sum_{j=1}^6 |u_j|_{Q_T}^{(2+\alpha , 1+\alpha/2)} \leq M
\end{align*}
and
\begin{align*}
    \sum_{j=1}^3 |\theta_j|^{(\alpha)} \leq M
\end{align*}
for all sufficiently small $T \leq T_1 = T_1(\delta,M,\mu,\nu,\sigma)$. This concludes step 2.

\newpage

\subsection{Step 3: $\mathcal{R}$ is a contraction mapping}

Next, we prove that for sufficiently small $T$, $\mathcal{R}$ is a contraction mapping. Let $(\overline{\u},\overline{\bftheta})$, $(\overline{\vv},\overline{\bfphi}) \in\prod_{j=1}^6 X_j \times \prod_{j=1}^3 Y_j$, and $(\u,\bftheta) = \mathcal{R}(\overline{\u},\overline{\bftheta})$, $(\vv,\bfphi) = \mathcal{R}(\overline{\vv},\overline{\bfphi})$. Denote the differences by
\begin{align*}
    \overline{w}_j &\coloneqq \overline{u}_j - \overline{v}_j, \quad j=1,\ldots,6\\
    w_j &\coloneqq u_j - v_j, \quad j=1,\ldots,6\\
    \overline{\Z}_j &\coloneqq \overline{\p}_j - \overline{\q}_j, \quad j=1,2,3\\
    \Z_j &\coloneqq \p_j - \q_j, \quad j=1,2,3\\
    \overline{\varphi}_j &\coloneqq \overline{\theta}_j - \overline{\phi}_j, \quad j=1,2,3\\
    \varphi_j &\coloneqq \theta_j - \phi_j, \quad j=1,2,3
\end{align*}
where $\q_1 = (v_1,v_2)$, $\q_2=(v_3,v_4)$, $\q_3 = (v_5,v_6)$.
Then the $w_j$'s and $\varphi_j$'s solve the following linear system:
\begin{align*}
    w_{jt} - D_j w_{jxx} &= g_j\\
    w_j(x,0) &\equiv 0\\
    \varphi_j(t) &= \nu\int_0^t \bigg[\sigma'(\overline{\theta}_{j-1}(s) -  \overline{\theta}_{j}(s))\int_0^1 |\overline{\p}_{jx}(x,s)|\;dx - \sigma'(\overline{\theta}_{j}(s) -  \overline{\theta}_{j+1}(s))\int_0^1 |\overline{\p}_{(j+1)x}(x,s)|\;dx \bigg]\\
    &\qquad -\bigg[\sigma'(\overline{\phi}_{j-1}(s) -  \overline{\phi}_{j}(s))\int_0^1 |\overline{\q}_{jx}(x,s)|\;dx - \sigma'(\overline{\phi}_{j}(s) -  \overline{\phi}_{j+1}(s))\int_0^1 |\overline{\q}_{(j+1)x}(x,s)|\;dx \bigg]\;ds
\end{align*}
subject to the boundary conditions
\begin{align*}
    w_1(0,t) = w_3(0,t) = w_5(0,t) = \Phi_{\overline{\p},\overline{\bftheta}}(t) - \Phi_{\overline{\q},\overline{\bfphi}}(t) \quad\text{ at }x=0\\
    w_2(0,t) = w_4(0,t) = w_6(0,t) = \Psi_{\overline{\p},\overline{\bftheta}}(t) - \Psi_{\overline{\q},\overline{\bfphi}}(t) \quad\text{ at }x=0\\
    w_j(1,t) = 0 \quad\text{ at }x=1, \quad j=1,\ldots,6.
\end{align*}
Here,
\begin{align*}
    g_j &= \begin{cases}
        \bigg( \frac{\sigma_{3,1}^{\overline{\bftheta}}}{|\overline{\p}_{1x}|^2} -\frac{\sigma_{3,1}^0}{|\p_{1x}^0|^2} \bigg)\overline{u}_{jxx} - \bigg( \frac{\sigma_{3,1}^{\overline{\bfphi}}}{|\overline{\q}_{1x}|^2} -\frac{\sigma_{3,1}^0}{|\q_{1x}^0|^2} \bigg)\overline{v}_{jxx}&\text{ for }j=1,2\\
        \bigg( \frac{\sigma_{1,2}^{\overline{\bftheta}}}{|\overline{\p}_{2x}|^2} -\frac{\sigma_{1,2}^0}{|\p_{2x}^0|^2} \bigg)\overline{u}_{jxx} - \bigg( \frac{\sigma_{1,2}^{\overline{\bfphi}}}{|\overline{\q}_{2x}|^2} -\frac{\sigma_{1,2}^0}{|\q_{2x}^0|^2} \bigg)\overline{v}_{jxx} &\text{ for }j=3,4\\
        \bigg( \frac{\sigma_{2,3}^{\overline{\bftheta}}}{|\overline{\p}_{3x}|^2} -\frac{\sigma_{2,3}^0}{|\p_{3x}^0|^2} \bigg)\overline{u}_{jxx} - \bigg( \frac{\sigma_{2,3}^{\overline{\bfphi}}}{|\overline{\q}_{3x}|^2} -\frac{\sigma_{2,3}^0}{|\q_{3x}^0|^2} \bigg)\overline{v}_{jxx} &\text{ for }j=5,6.
    \end{cases}
\end{align*}
Since this is a linear system of uniformly parabolic PDEs with the same coefficients as the previous one, the solution also satisfies a Schauder-type estimate
\begin{align*}
    \sum_{j=1}^6 |w_j|_{Q_T}^{(2+\alpha , 1+\alpha/2)} &\leq C_S \bigg( \sum_{j=1}^6 |g_j|_{Q_T}^{(\alpha , \alpha/2)} +  |\Phi_{\overline{\p},\overline{\bftheta}} - \Phi_{\overline{\q},\overline{\bfphi}}|^{(1+\alpha/2)} + |\Psi_{\overline{\p},\overline{\bftheta}} - \Psi_{\overline{\q},\overline{\bfphi}}|^{(1+\alpha/2)}  \bigg).
\end{align*}


\subsubsection{Estimate $|g_j|_{Q_T}^{(\alpha , \alpha/2)}$}

As before, it suffices to just consider $j=1$.

\begin{align*}
    |g_1|_{Q_T}^{(\alpha , \alpha/2)} &\leq \bigg|\bigg( \frac{\sigma_{3,1}^{\overline{\bftheta}}}{|\overline{\p}_{1x}|^2} -\frac{\sigma_{3,1}^0}{|\p_{1x}^0|^2} \bigg)\overline{w}_{1xx}\bigg|_{Q_T}^{(\alpha , \alpha/2)}   +   \bigg|\bigg( \frac{\sigma_{3,1}^{\overline{\bftheta}}}{|\overline{\p}_{1x}|^2} -\frac{\sigma_{3,1}^{\overline{\bfphi}}}{|\overline{\q}_{1x}|^2} \bigg)\overline{v}_{1xx}\bigg|_{Q_T}^{(\alpha , \alpha/2)}\\
    &\eqqcolon |III|_{Q_T}^{(\alpha , \alpha/2)} + |IV|_{Q_T}^{(\alpha , \alpha/2)}.
\end{align*}

We can estimate $|III|_{Q_T}^{(\alpha , \alpha/2)}$ in the same way as $|f_1|_{Q_T}^{(\alpha , \alpha/2)}$, with $\overline{u}_{1xx}$ replaced by $\overline{w}_{1xx}$. As such, we get

\begin{align*}
    |III|_{Q_T}^{(\alpha , \alpha/2)} & \leq |\sigma|^{(0)} \bigg|\bigg( \frac{1}{|\overline{\p}_{1x}|^2} -\frac{1}{|\p_{1x}^0|^2}\bigg)\overline{w}_{1xx} \bigg|_{Q_T}^{(0)} + |\sigma|^{(0)} \bigg\langle\bigg( \frac{1}{|\overline{\p}_{1x}|^2} -\frac{1}{|\p_{1x}^0|^2}\bigg)\overline{w}_{1xx} \bigg\rangle_{x,Q_T}^{(\alpha)}\\
    &+  |\sigma|^{(0)} \bigg\langle  \bigg( \frac{1}{|\overline{\p}_{1x}|^2} -\frac{1}{|\p_{1x}^0|^2}\bigg)\overline{w}_{1xx}\bigg\rangle_{t,Q_T}^{(\alpha/2)} + \bigg| \bigg( \frac{1}{|\overline{\p}_{1x}|^2} -\frac{1}{|\p_{1x}^0|^2}\bigg)\overline{w}_{1xx}\bigg|_{Q_T}^{(0)}\langle  \sigma_{3,1}^{\overline{\bftheta}}\rangle^{(\alpha/2)}\\
    &+ C_\delta |\sigma_{3,1}^{\overline{\bftheta}} - \sigma_{3,1}^0|^{(0)} |\overline{w}_{1xx}|_{Q_T}^{(0)} + |\sigma_{3,1}^{\overline{\bftheta}} - \sigma_{3,1}^0|^{(0)}  \bigg\langle \frac{1}{|\p_{1x}^0|^2}\overline{w}_{1xx}\bigg\rangle_{x,Q_T}^{(\alpha)}\\
    &+ C_\delta \bigg( |\sigma_{3,1}^{\overline{\bftheta}} - \sigma_{3,1}^0|^{(0)} \langle\overline{w}_{1xx}\rangle_{t,Q_T}^{(\alpha/2)} + \langle \sigma_{3,1}^{\overline{\bftheta}} - \sigma_{3,1}^0 \rangle^{(\alpha/2)}|\overline{w}_{1xx}|_{Q_T}^{(0)}\bigg)
\end{align*}

Using the estimate 
\begin{align}\label{bronsard-reitich-estimate}
    \bigg|\bigg( \frac{1}{|\overline{\p}_{1x}|^2} -\frac{1}{|\p_{1x}^0|^2}\bigg)\overline{w}_{1xx} \bigg|_{Q_T}^{(\alpha)} + \bigg|\bigg( \frac{1}{|\overline{\p}_{1x}|^2} -\frac{1}{|\overline{\q}_{1x}|^2}\bigg)\overline{v}_{1xx} \bigg|_{Q_T}^{(\alpha)}  \leq C_\delta M T^{1/2}|\overline{\Z}_1|_{Q_T}^{(2+\alpha , 1+\alpha/2)}
\end{align}
from \cite{bronsardreitich} (p.374), as well as our estimates for $|f_1|_{Q_T}^{(\alpha , \alpha/2)}$, we obtain

\begin{align*}
    |III|_{Q_T}^{(\alpha , \alpha/2)} & \leq C_{\delta,\sigma}MT^{1/2}|\overline{\Z}_1|_{Q_T}^{(2+\alpha , 1+\alpha/2)}(1+MT^{\alpha/2})\\
    &+ C_{\delta,\sigma}MT^{\alpha/2} |\overline{\Z}_1|_{Q_T}^{(2+\alpha , 1+\alpha/2)}(T^{\alpha/2}+T^{\alpha/2}M^2 + 1)\\
    &\leq C_{\delta,M,\sigma}T^{\alpha/2}|\overline{\Z}_1|_{Q_T}^{(2+\alpha , 1+\alpha/2)} .
\end{align*}

Next, we estimate $|IV|_{Q_T}^{(\alpha , \alpha/2)}$.
\begin{align*}
    |IV|_{Q_T}^{(0)} &\leq \bigg| \frac{\sigma_{3,1}^{\overline{\bftheta}}}{|\overline{\p}_{1x}|^2} -\frac{\sigma_{3,1}^{\overline{\bfphi}}}{|\overline{\q}_{1x}|^2} \bigg|_{Q_T}^{(0)} |\overline{v}_{1xx}|_{Q_T}^{(0)}\\
    &\leq M\bigg| \frac{\sigma_{3,1}^{\overline{\bftheta}}}{|\overline{\p}_{1x}|^2} -\frac{\sigma_{3,1}^{\overline{\bfphi}}}{|\overline{\q}_{1x}|^2} \bigg|_{Q_T}^{(0)}\\
    &\leq C_{\delta,M}  \bigg(|\sigma|^{(0)}  |\overline{\Z}_{1x}|_{Q_T}^{(0)}  +   Lip(\sigma) (|\overline{\varphi}_3|^{(0)} + |\overline{\varphi}_1|^{(0)})\bigg)\\
    &\leq C_{\delta,M,\sigma} \bigg( |\overline{\Z}_1|_{Q_T}^{(2+\alpha , 1+\alpha/2)}T^{\alpha/2}  +  T^\alpha (|\overline{\varphi}_3|^{(\alpha)} + |\overline{\varphi}_1|^{(\alpha)})\bigg)
\end{align*}
where in the last inequality we used the fact that $\overline{\Z}_j(\cdot,0)=\mathbf{0}$ and $\overline{\varphi}_j(0)=0$, together with Lemma \ref{lemma-zero}.

\begin{align*}
    \langle IV\rangle_{x,Q_T}^{(\alpha)} &= \bigg\langle\bigg( \frac{\sigma_{3,1}^{\overline{\bftheta}}}{|\overline{\p}_{1x}|^2} -\frac{\sigma_{3,1}^{\overline{\bfphi}}}{|\overline{\q}_{1x}|^2} \bigg)\overline{v}_{1xx}\bigg\rangle_{x,Q_T}^{(\alpha )}\\
    &\leq \bigg\langle
     \sigma_{3,1}^{\overline{\bftheta}} \bigg(\frac{1}{|\overline{\p}_{1x}|^2} -\frac{1}{|\overline{\q}_{1x}|^2}\bigg)\overline{v}_{1xx}\bigg\rangle_{x,Q_T}^{(\alpha )} +  \bigg\langle
     \frac{1}{|\overline{\q}_{1x}|^2}(\sigma_{3,1}^{\overline{\bftheta}} - \sigma_{3,1}^{\overline{\bfphi}})\overline{v}_{1xx}\bigg\rangle_{x,Q_T}^{(\alpha )}\\
     &\leq C_{\delta,M,\sigma}T^{1/2}|\overline{\Z}_1|_{Q_T}^{(2+\alpha , 1+\alpha/2)} + Lip(\sigma) (|\varphi_3|^{(0)} + |\varphi_1|^{(0)})\bigg\langle \frac{1}{|\overline{\q}_{1xx}|^2}\overline{v}_{1xx}\bigg\rangle_{x,Q_T}^{(\alpha )}\\
     &\leq C_{\delta,M,\sigma}T^{1/2}|\overline{\Z}_1|_{Q_T}^{(2+\alpha , 1+\alpha/2)} + C_{\delta,M,\sigma}T^\alpha (|\varphi_3|^{(\alpha)} + |\varphi_1|^{(\alpha)})
\end{align*}

where we used \eqref{bronsard-reitich-estimate} in the second inequality. 

Estimating the seminorms in $t$, we have
\begin{align*}
    \langle IV\rangle_{t,Q_T}^{(\alpha/2)} &\leq \bigg\langle
     \sigma_{3,1}^{\overline{\bftheta}} \bigg(\frac{1}{|\overline{\p}_{1x}|^2} -\frac{1}{|\overline{\q}_{1x}|^2}\bigg)\overline{v}_{1xx}\bigg\rangle_{t,Q_T}^{(\alpha/2)} +  \bigg\langle
     \frac{1}{|\overline{\q}_{1x}|^2}(\sigma_{3,1}^{\overline{\bftheta}} - \sigma_{3,1}^{\overline{\bfphi}})\overline{v}_{1xx}\bigg\rangle_{t,Q_T}^{(\alpha/2)}\\
     &\leq \langle IV_a \rangle_{t,Q_T}^{(\alpha/2)} + \langle IV_b \rangle_{t,Q_T}^{(\alpha/2)}
\end{align*}
with
\begin{align*}
    \langle IV_a \rangle_{t,Q_T}^{(\alpha/2)} &\leq C_{\delta,\sigma}MT^{1/2} |\overline{\Z}_1|_{Q_T}^{(2+\alpha , 1+\alpha/2)}   +  \langle \sigma_{3,1}^{\overline{\bftheta}} \rangle^{(\alpha/2)}\bigg|\bigg(\frac{1}{|\overline{\p}_{1x}|^2} -\frac{1}{|\overline{\q}_{1x}|^2}\bigg)\overline{v}_{1xx}\bigg|_{Q_T}^{(0)}\\
    &\leq C_{\delta,M,\sigma}T^{1/2}|\overline{\Z}_1|_{Q_T}^{(2+\alpha , 1+\alpha/2)}  +  C_{\delta,\sigma}MT^{\alpha/2}MT^{1/2} |\overline{\Z}_1|_{Q_T}^{(2+\alpha , 1+\alpha/2)} \\
    &= C_{\delta,M,\sigma}T^{1/2}|\overline{\Z}_1|_{Q_T}^{(2+\alpha , 1+\alpha/2)}. 
\end{align*}
and
\begin{align*}
    \langle IV_b \rangle_{t,Q_T}^{(\alpha/2)} &= \bigg\langle
     \frac{1}{|\overline{\q}_{1x}|^2}(\sigma_{3,1}^{\overline{\bftheta}} - \sigma_{3,1}^{\overline{\bfphi}})\overline{v}_{1xx}\bigg\rangle_{t,Q_T}^{(\alpha/2)}\\
     &\leq \langle \sigma_{3,1}^{\overline{\bftheta}} - \sigma_{3,1}^{\overline{\bfphi}} \rangle^{(\alpha/2)}\bigg|\frac{1}{|\overline{\q}_{1x}|^2}\overline{v}_{1xx}\bigg|_{Q_T}^{(0)} +  |\sigma_{3,1}^{\overline{\bftheta}} - \sigma_{3,1}^{\overline{\bfphi}}|^{(0)} \bigg\langle \frac{1}{|\overline{\q}_{1x}|^2}\overline{v}_{1xx} \bigg\rangle_{t,Q_T}^{(\alpha/2)}\\
     &\leq C_\sigma \bigg(\langle \varphi_3\rangle^{(\alpha/2)} + \langle \varphi_1\rangle^{(\alpha/2)}  + M(|\varphi_3|^{(0)} + |\varphi_1|^{(0)})\bigg)   C_\delta M + Lip(\sigma) T^\alpha (|\varphi_3|^{(\alpha)} + |\varphi_1|^{(\alpha)})C_\delta (M+M^3)\\
     &\leq C_{\delta,M,\sigma}T^{\alpha/2}(|\varphi_3|^{(\alpha)} + |\varphi_1|^{(\alpha)}  ) + C_{\delta,M,\sigma}T^\alpha (|\varphi_3|^{(\alpha)} + |\varphi_1|^{(\alpha)}).
\end{align*}
To obtain the second inequality, we used Lemma \ref{lemma-difference} with $f(t) = \overline{\theta}_3 (t) - \overline{\theta}_1 (t) $ and $g(t) = \overline{\phi}_3 (t) - \overline{\phi}_1 (t) $. In conclusion, 
\begin{align*}
    |g_1|_{Q_T}^{(\alpha , \alpha/2)} \leq C_{\delta,M,\sigma} T^{\alpha/2}\bigg(|\overline{\Z}_1|_{Q_T}^{(2+\alpha , 1+\alpha/2)} +|\varphi_3|^{(\alpha)} + |\varphi_1|^{(\alpha)}  \bigg).
\end{align*}

\subsubsection{Estimate $|\Phi_{\overline{\p},\overline{\bftheta}} - \Phi_{\overline{\q},\overline{\bfphi}}|^{(1+\alpha/2)}$}\label{estimatephi}

We will show that
\begin{align*}
    |\Phi_{\overline{\p},\overline{\bftheta}} - \Phi_{\overline{\q},\overline{\bfphi}}|^{(1+\alpha/2)} \leq C_{\delta,M,\mu,\sigma}T^{\alpha/2}\bigg( \sum_{j=1}^3 |\overline{\Z}_j|_{Q_T}^{(2+\alpha , 1+\alpha/2)} + \sum_{j=1}^3 |\varphi_j|^{(\alpha)} \bigg).
\end{align*}
A similar estimate holds for $|\Psi_{\overline{\p},\overline{\bftheta}} - \Psi_{\overline{\q},\overline{\bfphi}}|^{(1+\alpha/2)}$.

Recall that
\begin{align*}
    \Phi_{\overline{\p},\overline{\bftheta}}(t)- \Phi_{\overline{\q},\overline{\bfphi}}(t) = \frac{1}{\mu}\int_0^t & \sigma_{3,1}^{\overline{\bftheta}}(r) \frac{\overline{u}_{1x}(0,r)}{|\overline{\p}_{1x}(0,r)|}  -  \sigma_{3,1}^{\overline{\bfphi}}(r) \frac{\overline{v}_{1x}(0,r)}{|\overline{\q}_{1x}(0,r)|} \\
    &+\sigma_{1,2}^{\overline{\bftheta}}(r) \frac{\overline{u}_{3x}(0,r)}{|\overline{\p}_{2x}(0,r)|}  -  \sigma_{1,2}^{\overline{\bfphi}}(r) \frac{\overline{v}_{3x}(0,r)}{|\overline{\q}_{2x}(0,r)|}\\
    &+ \sigma_{2,3}^{\overline{\bftheta}}(r) \frac{\overline{u}_{5x}(0,r)}{|\overline{\p}_{3x}(0,r)|}  -  \sigma_{2,3}^{\overline{\bfphi}}(r) \frac{\overline{v}_{5x}(0,r)}{|\overline{\q}_{3x}(0,r)|}\;dr.
\end{align*}

To estimate $|\Phi'_{\overline{\p},\overline{\bftheta}} - \Phi'_{\overline{\q},\overline{\bfphi}}|^{(0)}$, note that
\begin{align*}
    \bigg| \sigma_{3,1}^{\overline{\bftheta}}(t) \frac{\overline{u}_{1x}(0,t)}{|\overline{\p}_{1x}(0,t)|}  -  \sigma_{3,1}^{\overline{\bfphi}}(t) \frac{\overline{v}_{1x}(0,t)}{|\overline{\q}_{1x}(0,t)|}\bigg|^{(0)} & \leq |\sigma|^{(0)} \bigg|\frac{\overline{u}_{1x}(0,t)}{|\overline{\p}_{1x}(0,t)|} -   \frac{\overline{v}_{1x}(0,t)}{|\overline{\q}_{1x}(0,t)|}\bigg|^{(0)} + |\sigma_{3,1}^{\overline{\bftheta}}(t) -  \sigma_{3,1}^{\overline{\bfphi}}(t)|^{(0)} \bigg|\frac{\overline{v}_{1x}(0,t)}{|\overline{\q}_{1x}(0,t)|}\bigg|^{(0)}\\
    &\leq C_{\delta,\sigma}M T^{(1+\alpha)/2}|\overline{\Z}_1|_{Q_T}^{(2+\alpha , 1+\alpha/2)} + |\sigma_{3,1}^{\overline{\bftheta}}(t) -  \sigma_{3,1}^{\overline{\bfphi}}(t)|^{(0)} \\
    &\leq C_{\delta,M,\sigma} T^{(1+\alpha)/2}|\overline{\Z}_1|_{Q_T}^{(2+\alpha , 1+\alpha/2)} + Lip(\sigma) T^\alpha (|\varphi_3|^{(\alpha)} + |\varphi_1|^{(\alpha)})
\end{align*}
and so
\begin{align*}
    |\Phi'_{\overline{\p},\overline{\bftheta}} - \Phi'_{\overline{\q},\overline{\bfphi}}|^{(0)} \leq C_{\delta,M,\mu,\sigma}T^{\alpha/2}\bigg( \sum_{j=1}^3 |\overline{\Z}_j|_{Q_T}^{(2+\alpha , 1+\alpha/2)} + \sum_{j=1}^3 |\varphi_j|^{(\alpha)} \bigg).
\end{align*}

We also get
\begin{align*}
    |\Phi_{\overline{\p},\overline{\bftheta}} - \Phi_{\overline{\q},\overline{\bfphi}}|^{(0)} \leq C_{\delta,M,\mu,\sigma}T^{1+\alpha/2}\bigg( \sum_{j=1}^3 |\overline{\Z}_j|_{Q_T}^{(2+\alpha , 1+\alpha/2)} + \sum_{j=1}^3 |\varphi_j|^{(\alpha)} \bigg).
\end{align*}

Lastly, to estimate $\langle \Phi'_{\overline{\p},\overline{\bftheta}} - \Phi'_{\overline{\q},\overline{\bfphi}} \rangle^{(\alpha/2)}$, it suffices to estimate
\begin{align*}
    & \bigg\langle\sigma_{3,1}^{\overline{\bftheta}}(t) \frac{\overline{u}_{1x}(0,t)}{|\overline{\p}_{1x}(0,t)|}  -  \sigma_{3,1}^{\overline{\bfphi}}(t) \frac{\overline{v}_{1x}(0,t)}{|\overline{\q}_{1x}(0,t)|}\bigg\rangle^{(\alpha/2)} \\
    & \leq \bigg\langle\sigma_{3,1}^{\overline{\bftheta}}(t) \bigg( \frac{\overline{u}_{1x}(0,t)}{|\overline{\p}_{1x}(0,t)|} - \frac{\overline{v}_{1x}(0,t)}{|\overline{\q}_{1x}(0,t)|} \bigg)\bigg\rangle^{(\alpha/2)} + \bigg\langle (\sigma_{3,1}^{\overline{\bftheta}}(t) - \sigma_{3,1}^{\overline{\bfphi}}(t) )\frac{\overline{v}_{1x}(0,t)}{|\overline{\q}_{1x}(0,t)|}\bigg\rangle^{(\alpha/2)}\\
    &\leq |\sigma|^{(0)}\bigg\langle\frac{\overline{u}_{1x}(0,t)}{|\overline{\p}_{1x}(0,t)|} - \frac{\overline{v}_{1x}(0,t)}{|\overline{\q}_{1x}(0,t)|}\bigg\rangle^{(\alpha/2)} + \langle \sigma_{3,1}^{\overline{\bftheta}}(t)\rangle^{(\alpha/2)} \bigg|\frac{\overline{u}_{1x}(0,t)}{|\overline{\p}_{1x}(0,t)|} - \frac{\overline{v}_{1x}(0,t)}{|\overline{\q}_{1x}(0,t)|}\bigg|^{(0)}\\
    &+ |\sigma_{3,1}^{\overline{\bftheta}}(t) - \sigma_{3,1}^{\overline{\bfphi}}(t)|^{(0)} \bigg\langle \frac{\overline{v}_{1x}(0,t)}{|\overline{\q}_{1x}(0,t)|} \bigg\rangle^{(\alpha/2)} + \langle \sigma_{3,1}^{\overline{\bftheta}}(t) - \sigma_{3,1}^{\overline{\bfphi}}(t) \rangle^{(\alpha/2)}\bigg|  \frac{\overline{v}_{1x}(0,t)}{|\overline{\q}_{1x}(0,t)|} \bigg|^{(0)}.
\end{align*}
Using the estimates
\begin{align*}
    \bigg\langle\frac{\overline{u}_{1x}(0,t)}{|\overline{\p}_{1x}(0,t)|} - \frac{\overline{v}_{1x}(0,t)}{|\overline{\q}_{1x}(0,t)|}\bigg\rangle^{(\alpha/2)} & \leq C_\delta |\overline{\Z}_1|_{Q_T}^{(2+\alpha , 1+\alpha/2)}MT^{1/2},\\\\
    \langle \sigma_{3,1}^{\overline{\bftheta}}(t)\rangle^{(\alpha/2)} & \leq C_{\delta,\sigma}MT^{\alpha/2},\\\\
    \bigg|\frac{\overline{u}_{1x}(0,t)}{|\overline{\p}_{1x}(0,t)|} - \frac{\overline{v}_{1x}(0,t)}{|\overline{\q}_{1x}(0,t)|}\bigg|^{(0)} & \leq T^{\alpha/2}\bigg\langle\frac{\overline{u}_{1x}(0,t)}{|\overline{\p}_{1x}(0,t)|} - \frac{\overline{v}_{1x}(0,t)}{|\overline{\q}_{1x}(0,t)|}\bigg\rangle^{(\alpha/2)},
\end{align*}
we obtain
\begin{align*}
    &\bigg\langle\sigma_{3,1}^{\overline{\bftheta}}(t) \frac{\overline{u}_{1x}(0,t)}{|\overline{\p}_{1x}(0,t)|}  -  \sigma_{3,1}^{\overline{\bfphi}}(t) \frac{\overline{v}_{1x}(0,t)}{|\overline{\q}_{1x}(0,t)|}\bigg\rangle^{(\alpha/2)} \\
    & \leq C_{\delta,M,\sigma}T^{1/2}|\overline{\Z}_1|_{Q_T}^{(2+\alpha , 1+\alpha/2)} + C_{\delta,M,\sigma}T^{\alpha/2}(|\varphi_3|^{(\alpha)} + |\varphi_1|^{(\alpha)}).
\end{align*}

In conclusion,
\begin{align*}
    \langle \Phi'_{\overline{\p},\overline{\bftheta}} - \Phi'_{\overline{\q},\overline{\bfphi}} \rangle^{(\alpha/2)} & \leq C_{\delta,M,\mu,\sigma}T^{\alpha/2}\bigg( \sum_{j=1}^3 |\overline{\Z}_j|_{Q_T}^{(2+\alpha , 1+\alpha/2)} + \sum_{j=1}^3 |\varphi_j|^{(\alpha)} \bigg).
\end{align*}

\subsubsection{Estimate $|\varphi_j|^{(\alpha)}$}

\begin{align*}
    \varphi_j(t) &= \nu\int_0^t \bigg[\sigma'(\overline{\theta}_{j-1}(r) -  \overline{\theta}_{j}(r))\int_0^1 |\overline{\p}_{jx}(x,r)|\;dx - \sigma'(\overline{\theta}_{j}(r) -  \overline{\theta}_{j+1}(r))\int_0^1 |\overline{\p}_{(j+1)x}(x,r)|\;dx \bigg]\\
    &-\bigg[\sigma'(\overline{\phi}_{j-1}(r) -  \overline{\phi}_{j}(r))\int_0^1 |\overline{\q}_{jx}(x,r)|\;dx - \sigma'(\overline{\phi}_{j}(r) -  \overline{\phi}_{j+1}(r))\int_0^1 |\overline{\q}_{(j+1)x}(x,r)|\;dx \bigg]\;dr\\
    & = \nu\int_0^t \bigg[\sigma'(\overline{\theta}_{j-1}(r) -  \overline{\theta}_{j}(r))\int_0^1 |\overline{\p}_{jx}(x,r)|\;dx - \sigma'(\overline{\phi}_{j-1}(r) -  \overline{\phi}_{j}(r))\int_0^1 |\overline{\q}_{jx}(x,r)|\;dx \bigg]\\
    &+\bigg[\sigma'(\overline{\phi}_{j}(r) -  \overline{\phi}_{j+1}(r))\int_0^1 |\overline{\q}_{(j+1)x}(x,r)|\;dx  -\sigma'(\overline{\theta}_{j}(r) -  \overline{\theta}_{j+1}(r))\int_0^1 |\overline{\p}_{(j+1)x}(x,r)|\;dx  \bigg]\;dr\\
    & = \nu\int_0^t V + VI \;dr
\end{align*}

\begin{align*}
    |V| &\leq |\sigma'(\overline{\theta}_{j-1}(r) -  \overline{\theta}_{j}(r)) - \sigma'(\overline{\phi}_{j-1}(r) -  \overline{\phi}_{j}(r))|\int_0^1 |\overline{\p}_{jx}(x,r)|\;dx\\
    &+ |\sigma'(\overline{\phi}_{j-1}(r) -  \overline{\phi}_{j}(r))|\int_0^1 \big||\overline{\p}_{jx}(x,r)|-|\overline{\q}_{jx}(x,r)|\big|\;dx\\
    &\leq Lip(\sigma')(|\varphi_{j-1}|^{(\alpha)} + |\varphi_{j}|^{(\alpha)})M + Lip(\sigma) |\overline{\Z}_j|_{Q_T}^{(2+\alpha , 1+\alpha/2)}
\end{align*}

and similarly,

\begin{align*}
    |VI| &\leq Lip(\sigma')(|\varphi_{j}|^{(\alpha)} + |\varphi_{j+1}|^{(\alpha)})M + Lip(\sigma) |\overline{\Z}_{j+1}|_{Q_T}^{(2+\alpha , 1+\alpha/2)}.
\end{align*}

Thus,

\begin{align*}
    |\varphi_j|^{(0)} \leq C_{M,\nu,\sigma} T\bigg( \sum_{j=1}^3 |\overline{\Z}_j|_{Q_T}^{(2+\alpha , 1+\alpha/2)} + \sum_{j=1}^3 |\varphi_j|^{(\alpha)} \bigg)
\end{align*}

and

\begin{align*}
    \langle \varphi_j \rangle^{(\alpha)} \leq C_{M,\nu,\sigma} T^{1-\alpha}\bigg( \sum_{j=1}^3 |\overline{\Z}_j|_{Q_T}^{(2+\alpha , 1+\alpha/2)} + \sum_{j=1}^3 |\varphi_j|^{(\alpha)} \bigg)
\end{align*}

to give us

\begin{align*}
    |\varphi_j|^{(\alpha)} \leq C_{M,\nu,\sigma}(T+T^{1-\alpha})\bigg( \sum_{j=1}^3 |\overline{\Z}_j|_{Q_T}^{(2+\alpha , 1+\alpha/2)} + \sum_{j=1}^3 |\varphi_j|^{(\alpha)} \bigg).
\end{align*}

Thus there exists $T_2 = T_2(\delta,M,\mu,\nu ,\sigma)\leq T_1$ such that for all $T\leq T_2$, we have

\begin{align*}
    \sum_{j=1}^6 |w_j|_{Q_T}^{(2+\alpha , 1+\alpha/2)} + \sum_{j=1}^3 |\varphi_j|^{(\alpha)}\leq \frac{1}{2}\bigg(\sum_{j=1}^6 |\overline{w}_j|_{Q_T}^{(2+\alpha , 1+\alpha/2)} + \sum_{j=1}^3 |\overline{\varphi}_j|^{(\alpha)}\bigg),
\end{align*}
i.e. $\mathcal{R}$ is a contraction mapping.

\section{Self-intersections During the Flow}\label{intersection}
In the case of motion of planar networks with equal {\em reduced mobilities} (see e.g. \cite{gs}) and the Herring angle condition at the triple junctions, it is believed that the only topological changes that can occur are junction-junction collisions (disappearance of one or more of the curves as they shrink to naught).
In particular, it is expected -- and supported by various rigorous theorems \cite{mantegazzanovagatortorelli,multiplejunctions} -- that two curves in the network can only come into contact at their end points.
This property of the exact dynamics has profound implications for the design of numerical methods, as it makes it feasible to attempt a classification of singular events and handle them computationally, thus allowing an extension of front tracking to be used.
(The situation is very different in higher dimensions, and even in the plane when not all reduced mobilities in the network are the same, typically requiring the use of numerical methods such as phase field, level sets, or threshold dynamics that represent interfaces implicitly).

In this Section, we carefully point out that when the Herring angle condition at junctions is replaced with triple junction drag condition (\ref{drag}), there can be additional singularities in the form of junctions colliding with the ``interior'' of curves in a planar network, even when all reduced mobilities are equal. 
This implies, in particular, that a front tracking approach to approximating the planar curve shortening flow in the presence of triple junction drag, even under the simplest choice of surface tensions and mobilities, may be especially challenging, suggesting the use of implicit representations e.g. phase field instead.
It also means a front tracking implementation for the Herring case cannot easily be adapted to the case with triple junction drag (else one might miss certain topological changes).
We now demonstrate all this with an example.
\begin{figure}[h]
\includegraphics[scale=0.25]{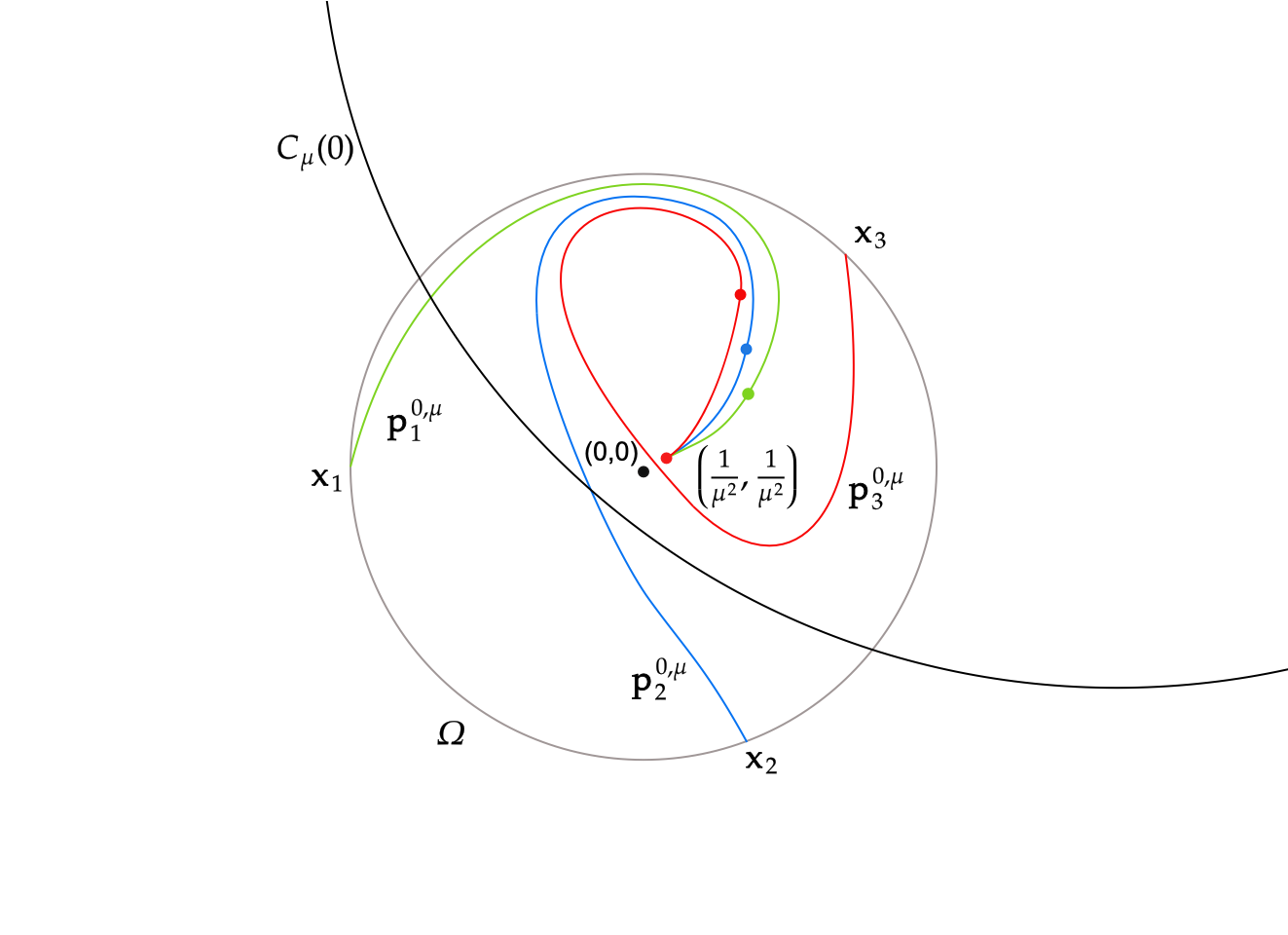}
\centering
\caption{Example of an initial network that will self-intersect under the flow}
\label{fig:intersection}
\end{figure}

Fix a large ball centered at the origin $\Omega = B(\mathbf{0},R)$ as our domain. For each $\mu\gg 1$, define three initial curves $\p_j^{0,\mu}(x): [0,1]\to\overline{\Omega}$ by
\begin{align}
    \p_j^{0,\mu}(x) \coloneqq \begin{cases}
    \bigg(\frac{1}{\mu^2} + x + \frac{1+j^2}{2\mu}\bigg(\frac{1}{\sqrt{2}} + \frac{1}{\sqrt{5}} + \frac{1}{\sqrt{10}}\bigg)x^2
    , \frac{1}{\mu^2} + x + \frac{1+j^2}{2\mu}\bigg(\frac{1}{\sqrt{2}} + \frac{2}{\sqrt{5}} + \frac{3}{\sqrt{10}}\bigg)x^2\bigg), & x\in[0,\frac{1}{2})\\
    \gamma_j^\mu(x), & x\in(\frac{1}{2},1]
    \end{cases}
\end{align}
where $\gamma_j^\mu(x)$ is any smooth continuation of the curve from $\p_j^{0,\mu}(\frac{1}{2})$ to $\p_j^{0,\mu}(1)= \x_j$. Choose $\gamma_j^\mu(x)$ so that $\gamma_{jxx}^\mu (1) = \mathbf{0}$. We list some key properties of $\p_j^{0,\mu}$:
\begin{enumerate}[label=(\roman*)]
    \item The triple junction is at $(\frac{1}{\mu^2},\frac{1}{\mu^2})$ and the fixed endpoints are at $\x_j\in\partial B(\mathbf{0},R)$.
    \item $\p_j^{0,\mu}$ satisfies the parametric compatibility conditions at $x=0,1$.
    \item For each $\mu$, there exists $x_{\mu,3}\in(\frac{1}{2},1)$ such that the point $\p_3^{0,\mu}(x_{\mu,3})$ lies on the line segment between the origin and the triple junction $(\frac{1}{\mu^2},\frac{1}{\mu^2})$.
    \item We can choose $\gamma_j^\mu (x)$ in a way that ensures a uniform  H\"{o}lder bound
    \begin{align}\label{uniformupperbound}           
        \sup_{\mu}|\p_j^{0,\mu} |^{(2+\alpha)} \leq C<\infty,
    \end{align}
    \item and a uniform lower bound
    \begin{align}\label{uniformlowerbound}
        \inf_{\mu}\min_{j=1,2,3}\inf_{x\in[0,1]}|\p_{jx}^{0,\mu}|\geq \delta >0.
    \end{align}
    \item The point $\p_3^{0,\mu}(\frac{1}{2})$ is bounded away from the triple junction $(\frac{1}{\mu^2},\frac{1}{\mu^2})$, uniformly in $\mu$.
\end{enumerate}

By Theorem \ref{maintheorem}, each initial data $\p_j^{0,\mu}(x)$ generates a unique solution $\p_j^\mu(x,t)$ to the parametric problem. Thanks to \eqref{uniformupperbound}, \eqref{uniformlowerbound} and the dependence of $T$ on the various parameters in Theorem \ref{maintheorem}, there exists a positive time $\Tilde{T}>0$ such that the solutions $\p_j^\mu(x,t)$ exist on $[0,\Tilde{T}]$ for all sufficiently large $\mu$.

\begin{proposition}
    There exists $\mu>0$ such that the curve $\p_3^\mu(x,t)$ will intersect itself at some time $t_\mu \in (0,\Tilde{T})$.
\end{proposition}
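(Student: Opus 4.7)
The strategy exploits the time-scale separation inherent to triple junction drag: for large $\mu$ the junction velocity is $O(1/\mu)$, while interior points of curve 3 move at $O(1)$ speed governed by curvature. Property (iii) places $P^\mu := \p_3^{0,\mu}(x_{\mu,3})$ within distance $\sqrt 2/\mu^2$ of the initial junction $A^\mu = (1/\mu^2, 1/\mu^2)$, so the two distinct parameters $x = 0$ and $x = x_{\mu,3}$ map to nearby points at $t = 0$. By designing the continuation $\gamma_3^\mu$ to steer $\p_3^\mu(x_{\mu,3}, t)$ directly toward the nearly-stationary junction, these two images are forced to coincide at some time $t_\mu = O(1/\mu^2) \ll \Tilde{T}$, producing the self-intersection.

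First I invoke Theorem \ref{maintheorem} together with (iv)--(vi) to obtain a $\mu$-uniform existence time $\Tilde{T}>0$ and uniform bounds $\|\p_j^\mu\|_{C^{2+\alpha,\,1+\alpha/2}(Q_{\Tilde{T}})} \leq M$. The drag condition \eqref{drag} gives $|\p_3^\mu(0,t) - A^\mu| \leq 3\|\sigma\|_\infty t/\mu$ for $t\in[0,\Tilde{T}]$, and after possibly shrinking $\Tilde{T}$ so that $|\p_{3x}^\mu|\geq \delta/2$, the PDE and uniform $C^{2+\alpha,\,1+\alpha/2}$ bound yield $|\p_{3t}^\mu(x,t)|\leq C$ uniformly in $\mu$.

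Next I design the continuation $\gamma_3^\mu$---constrained by (iv) only via the $C^{2+\alpha}$ norm---so as to prescribe $\p_{3x}^{0,\mu}(x_{\mu,3})$ and $\p_{3xx}^{0,\mu}(x_{\mu,3})$ with
\[
\p_{3t}^{0,\mu}(x_{\mu,3}) \;=\; \sigma_{2,3}^0\,\frac{\p_{3xx}^{0,\mu}(x_{\mu,3})}{|\p_{3x}^{0,\mu}(x_{\mu,3})|^2} \;=\; c_0\,\frac{(1,1)}{\sqrt 2}
\]
for some fixed $c_0>0$ independent of $\mu$. Set $D^\mu(t):=\p_3^\mu(x_{\mu,3},t) - \p_3^\mu(0,t)$. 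By (iii), $D^\mu(0) = (s_\mu-1)A^\mu$ is antiparallel to the designed initial velocity with magnitude $(1-s_\mu)\sqrt{2}/\mu^2$, and $\dot D^\mu(0) = c_0(1,1)/\sqrt 2 + O(1/\mu)$. The projection of $\dot D^\mu(0)$ onto $-D^\mu(0)/|D^\mu(0)|$ is therefore at least $c_0/2$ for $\mu$ large, and uniform H\"older continuity of $\p_{3t}^\mu$ propagates this to an interval $[0,\tau]$ of length independent of $\mu$. Hence $|D^\mu(t)|$ shrinks at rate at least $c_0/2$ over the time required to close the initial gap, so that $|D^\mu(t^*)|\to 0$ as $\mu\to\infty$ at $t^* = (1-s_\mu)\sqrt 2/(c_0\mu^2)$.

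To upgrade ``$|D^\mu|$ becomes small'' to an exact coincidence $D^\mu(t_\mu)=0$, I would introduce a perturbation parameter $\lambda$ rotating the prescribed initial velocity at $x_{\mu,3}$ by a small angle, producing trajectories $D^\mu(\cdot\,;\lambda)$ that depend continuously on $\lambda$ thanks to the uniform estimates. Tracking the perpendicular component of $D^\mu$ at the time $t^*(\lambda)$ minimizing $|D^\mu(\cdot\,;\lambda)|$, one sees via Taylor expansion using the uniform H\"older bounds that this component changes sign across some small interval in $\lambda$. The intermediate value theorem then produces $\lambda^*$ with $D^\mu(t_\mu;\lambda^*)=0$, yielding $\p_3^\mu(0,t_\mu) = \p_3^\mu(x_{\mu,3},t_\mu)$. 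The main obstacle is making this last perturbation / sign-change argument rigorous---verifying continuity of $t^*(\lambda)$ and controlling the $O(1/\mu^3)$ perpendicular error uniformly---which relies on careful quantitative use of the uniform H\"older estimates established in the earlier steps.
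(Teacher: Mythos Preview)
Your approach differs fundamentally from the paper's, and the gap you flag at the end is genuine.

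The paper does not attempt to make two specific points on the curve coincide. Instead it argues by contradiction via a barrier: for each large $\mu$ it places a circle $C_\mu(0)$ of radius $\sqrt{\mu}$ so that the origin lies just inside it at distance $1/\mu^2$, lets the circle shrink by curvature flow, and checks that by time $1/\mu$ it has swept past the (nearly stationary) junction. If $\p_3^\mu$ remained embedded throughout, the strong maximum principle for curve shortening would force first contact between $C_\mu(t)$ and $\p_3^\mu(\cdot,t)$ to occur at an endpoint, hence at the junction; but the winding design of $\p_3^{0,\mu}$ then forces $\p_3^\mu(1/2,t)$ to have travelled a $\mu$-independent positive distance within time $1/\mu$, contradicting the uniform bound on $\p_{3t}^\mu$ supplied by Theorem~\ref{maintheorem} via properties (iv)--(v). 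No shooting, no perturbation parameter, and no continuous-dependence argument is needed.

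Your route, by contrast, only reaches $|D^\mu(t^*)| = O(1/\mu^3)$, and upgrading this to an exact zero of the $\R^2$-valued map $D^\mu$ is where the real content lies. The one-parameter family in $\lambda$ would need continuous dependence of the nonlinear flow on the initial data in $C^{2+\alpha,1+\alpha/2}$ (not established in the paper), uniform-in-$\lambda$ control of all the error terms, and a well-defined continuous selection $t^*(\lambda)$ at which the parallel component vanishes. These are not insurmountable, but they are the crux of the argument rather than a technicality---and they also commit you to further specializing $\gamma_3^\mu$ beyond properties (i)--(vi). The barrier argument sidesteps all of this by invoking the maximum principle, a tool your proposal never uses.
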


\begin{proof}
    
For each $\mu$, construct a large initial circle $C_\mu (0)$ with radius $\sqrt{\mu}$ and centered at $((\sqrt{\mu}-\frac{1}{\mu^2})/\sqrt{2} , (\sqrt{\mu}-\frac{1}{\mu^2})/\sqrt{2} )$. Observe that dist$(C_\mu (0), \mathbf{0}) = \frac{1}{\mu^2}$. We will let this initial circle evolve by curvature flow, so that the radius of $C_\mu(t)$ is given by 
\begin{align*}
    R_\mu(t) = \sqrt{\mu - 2t},
\end{align*}
which means that at time $t$, the radius would have decreased by $\sqrt{\mu} - \sqrt{\mu - 2t}$.

On the other hand, \eqref{drag} implies that the speed of the triple junction is bounded
\begin{align*}
    |\p_{3t}^\mu(0,t)|\leq \frac{3}{\mu}
\end{align*}
and so the position of the triple junction at time $t$ can be estimated by
\begin{align*}
    |\p_3^\mu(0,t)|\leq \frac{1}{\mu^2}+\frac{3t}{\mu}.
\end{align*}

At time $t=\frac{1}{\mu}$, we have
\begin{align*}
    \bigg(\sqrt{\mu} - \sqrt{\mu - 2t} -\frac{1}{\mu^2} \bigg) - \bigg(\frac{1}{\mu^2}+\frac{3t}{\mu}\bigg) & = \bigg(\sqrt{\mu} - \sqrt{\mu - \frac{2}{\mu}}-\frac{1}{\mu^2}\bigg) - \bigg(\frac{1}{\mu^2}+\frac{3}{\mu^2}\bigg)\\
    &>\frac{1}{\mu^{3/2}} - \frac{5}{\mu^2}\\
    &>0
\end{align*}
for all sufficiently large $\mu$. Thus, for all sufficiently large $\mu$, the circle $C_\mu(t)$ would have `crossed' the triple junction by time $t=\frac{1}{\mu}$. In particular, the $C_\mu(t)$ will have intersected the curve $\p_3^\mu(x,t)$ at some point.

Suppose that for all large $\mu$, the curve $\p_3^\mu$ does not intersect itself. By the strong maximum principle, the circle cannot first come into contact with $\p_3^\mu$ at an interior point. Thus the circle must first come into contact with $\p_3^\mu$ at the triple junction. By our design of $\p_3^\mu$, this means that it must have `unwound' itself within a short time span of $\frac{1}{\mu}$. In particular, the point $\p_3^\mu(1/2 , t)$ must have travelled from its initial position $\p_3^\mu(1/2 , 0)$ to a vicinity of the triple junction $\p_3^\mu(0,t)$ within the time span of $\frac{1}{\mu}$. This distance that it travelled is at least
\begin{align*}
    \frac{1}{2}\text{dist}\bigg(\p_3^\mu(1/2 , 0) , \partial B\bigg(\mathbf{0}, \frac{1}{\mu^2} + \frac{3}{\mu^2}\bigg)\bigg)
\end{align*}
which is bounded below uniformly over all large $\mu$. By sending $\mu\to\infty$, we see that the point $\p_3^\mu(1/2,t)$ must have unbounded speed, in the sense that
\begin{align*}
    \limsup_{\mu\to\infty}\sup_{t\in[0,1/\mu]}|\p_{3t}^\mu(1/2,t)| = \infty.
\end{align*}
However, by the proof of Theorem \ref{maintheorem}, \eqref{uniformupperbound} and \eqref{uniformlowerbound}, we have a uniform bound on the parabolic H\"{o}lder norms 
\begin{align*}
    \limsup_{\mu\to\infty} |\p_3^\mu (x,t)|_{Q_{\Tilde{T}}}^{(2+\alpha, 1+\alpha/2)}\leq \Tilde{C}<\infty,
\end{align*}
a contradiction.

\end{proof}

\section{Stationary Solutions}
In this section, we study stationary states and their stability for our model (\ref{pde}) under certain choices of surface tension $\sigma$ that are common in existing materials science literature (e.g. the surface tension model of Read \& Shockley \cite{read_shockley}), and compare to corresponding statements in \cite{ep1,ep2}.
The upshot is that our results differ from those of \cite{ep1,ep2}; we attribute this difference to the different crystallographic orientation and surface tension choices made in those earlier works.

\begin{figure}[h]
\includegraphics[scale=0.25]{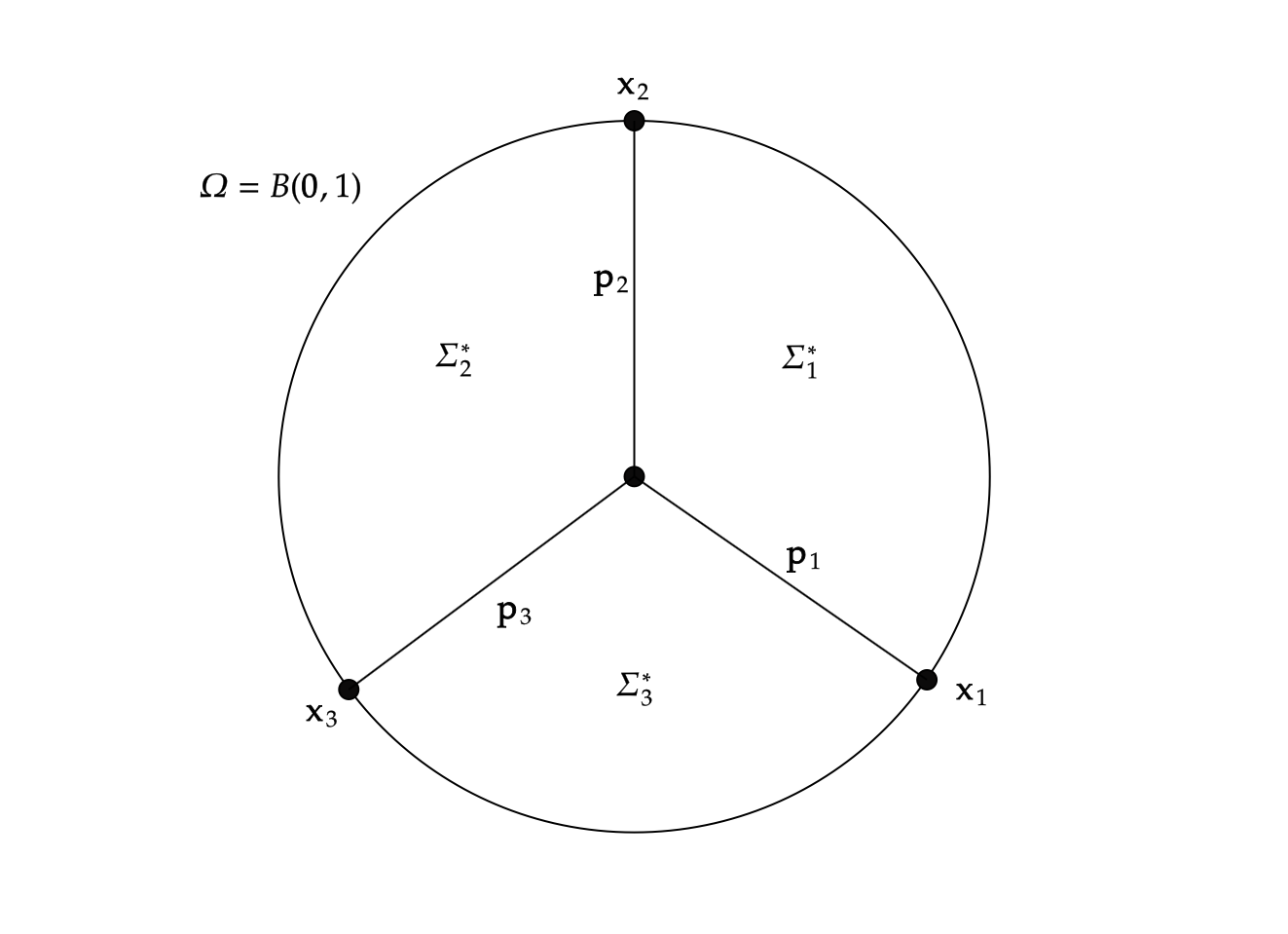}
\centering
\caption{Stationary configuration}
\label{fig:stationary}
\end{figure}

Let $\sigma$ be a function satisfying \eqref{S1}, \eqref{S2} and \eqref{S3}. 
To fix ideas, let
\begin{align}
    \x_1 &= \bigg(\frac{\sqrt{3}}{2}, -\frac{1}{2}\bigg) \\
    \x_2 &= (0,1)\\
    \x_3 &= \bigg(-\frac{\sqrt{3}}{2}, -\frac{1}{2}\bigg)
\end{align}
so that they form an equilateral triangle centered at the origin, with $|\x_j|=1$ for $j=1,2,3$ as shown in Figure \ref{fig:stationary}.
Let 
\begin{align}
\label{eq:stationary1}
    \theta_1(t) &\equiv 0\\
    \theta_2(t) &\equiv \frac{2\pi}{3}\\
    \theta_3(t) &\equiv \frac{4\pi}{3}
\end{align}
be constant orientation angles and
\begin{align}
\label{eq:stationary2}
    \p_j(x,t) = x \x_j, \qquad x\in [0,1],\; t\geq 0, \; j=1,\ldots,6
\end{align}
be (stationary) line segments connecting the origin (which is a triple junction) to $\x_j$.\\

To show that $\{\theta_j\}_{j=1}^3$, $\{\p_j\}_{j=1}^3$ form a stationary solution to our system, it suffices to show that the function 
\begin{align}
    (\phi_1, \phi_2, \phi_3) &\mapsto \sum_{j=1}^3 \sigma(\phi_{j-1}-\phi_j) \underbrace{\int_0^1 |\p_{jx}(x,t)|\;dx}_{=1}\\
    &= \sum_{j=1}^3 \sigma(\phi_{j-1}-\phi_j)
\end{align}
has a stationary point at $(\phi_1, \phi_2, \phi_3) = (\theta_1,\theta_2,\theta_3)$.
By translating $\phi_2$ and $\phi_3$ if necessary, we may assume that $\phi_1=0$. Then, if we let $x=\phi_2$ and $y=\phi_3$ and set
\begin{align}
    g(x,y) &= \sigma(y) + \sigma(-x) + \sigma(x-y),
\end{align}
we see that
\begin{align}
    \nabla g (x,y) = 0 \iff \sigma'(y) = \sigma'(x-y) = -\sigma'(x).
\end{align}

Using \eqref{S1} and \eqref{S2}, we check that
\begin{align}
    \nabla g\bigg(\frac{2\pi}{3} , \frac{4\pi}{3}\bigg) = 0
\end{align}
and so $\{\theta_j\}_{j=1}^3$, $\{\p_j\}_{j=1}^3$ form a stationary solution to our system.\\

Already at this stage we note some differences from \cite{ep1,ep2}.
Indeed, in \cite{ep1,ep2} the authors find that the only stationary solution to their model for grain boundary motion with triple junction drag that assumes all interfaces are straight lines is the trivial solution: There is a constant $C$ such that $\theta_j(t) = C$ for all $j\in\{1,2,3\}$ and $t\geq 0$ (hence the grains are aligned, with zero misorientation between them).
Noting that the solution (\ref{eq:stationary1}-\ref{eq:stationary2}) is also a solution to the system in \cite{ep1,ep2} for the same choice of surface tension, we attribute this discrepancy to the way misorientation angles are calculated in \cite{ep1,ep2} without $2\pi$-periodicity of the orientation angles.

We now consider the stability of our stationary solution (\ref{eq:stationary1}-\ref{eq:stationary2}). Let $\bftheta = (\theta_1,\theta_2,\theta_3)$ be the stationary angles defined in (\ref{eq:stationary1}) and let $\mathcal{N}$ denote the stationary network composed of the line segments $\{\p_j\}_{j=1}^3$ defined in (\ref{eq:stationary2}). In the following Proposition, we demonstrate that for a class of strictly convex surface tensions $\sigma$, the stationary solution $(\mathcal{N},\bftheta)$ is actually a stable local minimizer (modulo a translation of all three grain orientation angles at the same time) of the total surface energy $E$ defined in (\ref{eq:surface-energy}).

\begin{proposition}\label{prop:stability}
    Let $\sigma$ be a function satisfying \eqref{S1}, \eqref{S2} and \eqref{S3}. Suppose that
    \begin{align}
        \sigma(\theta) = \theta^2 + c
    \end{align}
    for some $c>0$ and $\theta \in [0,\pi]$. Then there exists $\overline{c}>0$ such that for all $c\geq \overline{c}$,
    \begin{align}
        E(\mathcal{N},\bftheta) \leq E(\mathcal{M},\bfphi)
    \end{align}
    for all networks $\mathcal{M}$ (with the same fixed endpoints $\x_j$, $j=1,2,3$ as $\mathcal{N})$ whose triple junction is sufficiently close to that of $\mathcal{N}$ and all $\bfphi = (\phi_1,\phi_2,\phi_3)$ sufficiently close to $\bftheta$. 
    
    Moreover, the stationary solution $(\mathcal{N},\bftheta)$ is stable (modulo a translation of all three grain orientation angles at the same time) in the sense that for all sufficiently small $\varepsilon>0$, there exists $\delta>0$ such that if $(\{\q_j(x,t)\}_{j=1}^3 , \{\phi_j(t)\}_{j=1}^3)$, $(x,t)\in[0,T) \times [0,1]$ is a solution to the parametric system in Definition \ref{def:parcomp}, with initial data $(\{\q_j^0(x)\}_{j=1}^3 , \{\phi_j^0\}_{j=1}^3)$ satisfying
    \begin{align}
        |(\phi_{j-1}^0 - \phi_j^0) - (\theta_{j-1} - \theta_j)|<\delta, &\qquad j=1,2,3,\\
        |\mathrm{Length}(\q_j^0) - \mathrm{Length}(\p_j)|<\delta, &\qquad j=1,2,3,\\
        |\q_j^0(0)|<\delta, &\qquad j=1,2,3,
    \end{align}
    then
    \begin{align}
        \sup_{t\in[0,T)} d_H(\q_j(t),\p_j) < \varepsilon, &\qquad j=1,2,3,\\
        \sup_{t\in[0,T)} |(\phi_{j-1}(t) - \phi_j(t)) - (\theta_{j-1}-\theta_j)|<\varepsilon, &\qquad j=1,2,3,
    \end{align}
    where $d_H(\q_j(t),\p_j)$ denotes the Hausdorff distance between the curves $\q_j(t)$ and $\p_j$.
\end{proposition}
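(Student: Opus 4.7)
The plan is to prove Part 1 by reducing to a finite-dimensional Hessian computation, and then deduce Part 2 by combining the gradient-flow structure with the coercivity furnished by Part 1. For any network $\mathcal{M}$ with triple junction at $\p$ and curves $\Gamma_j$ joining $\p$ to $\x_j$, the trivial inequality $\text{Length}(\Gamma_j) \geq |\p - \x_j|$ together with $\sigma > 0$ gives
\[
E(\mathcal{M}, \bfphi) \geq \widetilde{E}(\p, \bfphi) := \sum_{j=1}^3 \sigma(\phi_{j-1}-\phi_j)\, |\p-\x_j|,
\]
with equality iff each $\Gamma_j$ is the straight chord. It therefore suffices to show that $(\mathbf{0}, \bftheta)$ is a strict local minimizer of $\widetilde{E}$ modulo the simultaneous translation $\bfphi \mapsto \bfphi + s(1,1,1)$. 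At the stationary state every misorientation equals $\pm 2\pi/3$ (mod $2\pi$), where $\sigma$ is locally smooth with $\sigma_\ast := \sigma(\pm 2\pi/3) = (2\pi/3)^2 + c$, $\sigma'(-2\pi/3) = -4\pi/3$, and $\sigma''(\pm 2\pi/3) = 2$. Criticality is then immediate: $\nabla_\p \widetilde{E} = -\sigma_\ast \sum_j \x_j = \mathbf{0}$ by the equilateral arrangement, and each $\partial_{\phi_j} \widetilde{E} = 0$ because the two $\sigma'$-contributions from cyclically adjacent misorientations cancel.

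A direct computation of the Hessian at $(\mathbf{0}, \bftheta)$ yields three blocks. Using $\sum_j \x_j \otimes \x_j = \tfrac{3}{2} I_2$, the position block is $H_{\p\p} = \tfrac{3}{2} \sigma_\ast I_2$, whose smallest eigenvalue grows linearly in $c$. The orientation block is $H_{\bfphi\bfphi} = 6 I_3 - 2 J_3$ (with $J_3$ the $3\times 3$ all-ones matrix), with spectrum $\{0, 6, 6\}$ and kernel spanned by $(1,1,1)^T$ --- the expected translation symmetry. The mixed block $H_{\p\bfphi}$ has entries of the form $(4\pi/3)(x_{j+1,i} - x_{j,i})$, independent of $c$, and telescopes so that $H_{\p\bfphi}(1,1,1)^T = \mathbf{0}$. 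Consequently $H$ annihilates the direction $(0,0,1,1,1)^T$, and the Young-type estimate
\[
(\mathbf{u}, \alpha)^T H (\mathbf{u}, \alpha) \geq \tfrac{3}{2}\sigma_\ast |\mathbf{u}|^2 - C|\mathbf{u}||\alpha| + 6|\alpha|^2
\]
delivers strict positive definiteness on the orthogonal complement of that direction, provided $c$ is sufficiently large. This settles Part 1.

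For Part 2, the central structural fact is that the three orientation ODEs in \eqref{eq:rotation} sum telescopically to zero, so $\sum_j \phi_j(t)$ is conserved --- the kernel direction of the Hessian is exactly preserved by the flow. Since \eqref{pde} is the gradient flow of $E$ in the metric of Definition \ref{def:innerproduct}, $E(\mathcal{N}(t), \bfphi(t))$ is non-increasing, and a Taylor expansion of $E$ at $(\mathcal{N}, \bftheta)$ together with the three smallness hypotheses on the initial data gives the energy excess $E(\mathcal{N}(0), \bfphi(0)) - E(\mathcal{N}, \bftheta) = O(\delta)$, which persists for all $t \in [0, T)$. Writing this excess as $(E - \widetilde{E}) + (\widetilde{E} - E_\ast)$ with both summands non-negative, the second term --- via the strict coercivity from Part 1 --- controls the triple-junction displacement $|\q_j(t)(0)|$ and the misorientation deviations modulo translation, while the first term equals $\sum_j \sigma_{j-1,j}\big(\text{Length}(\q_j(t)) - |\q_j(t)(0) - \x_j|\big)$ and therefore controls how much each curve bends away from its chord. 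A classical isoperimetric-type estimate --- a planar curve from $\mathbf{a}$ to $\mathbf{b}$ with length $|\mathbf{b}-\mathbf{a}| + \eta$ lies within Hausdorff distance $O(\sqrt{\eta\, |\mathbf{b}-\mathbf{a}|})$ of the chord $\overline{\mathbf{a}\mathbf{b}}$ --- then upgrades the bending bound to the desired Hausdorff control, and the chord itself lies $O(\sqrt{\delta})$-close to $\p_j$ since the triple junction is close to the origin.

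The main obstacle is the genuine degeneracy of the Hessian in the direction $(0,0,1,1,1)^T$, which forces the stability statement to be phrased modulo a simultaneous rotation of all three orientations; this is harmless here only because the flow itself conserves $\sum_j \phi_j$, so the Hessian kernel and the conservation law align and the quotient-space estimate transfers cleanly to a bound on misorientations. A secondary technical point is the quantitative passage from length control to Hausdorff control of the individual curves, supplied by the elementary isoperimetric bound quoted above, which must be applied uniformly over small triple-junction displacements.
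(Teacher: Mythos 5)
Your proposal is correct and follows essentially the same route as the paper: the chord energy $\widetilde{E}(\p,\bfphi)$ is exactly the paper's auxiliary functional $F$ (the paper gauge-fixes $\phi_1=0$ and checks positivity of the four explicit eigenvalues for large $c$, whereas you keep the translation direction as an explicit Hessian kernel and argue block-wise --- these are equivalent), and the stability part is the same energy-monotonicity/Lyapunov argument. The one step you should make explicit is the trapping argument (the paper's $m(\varepsilon)$, $r(\varepsilon)$ construction): since $(0,0,\theta_2,\theta_3)$ is only a \emph{local} minimum of $F$, a small energy excess does not by itself confine the trajectory to the coercivity neighborhood --- one needs continuity of the trajectory plus the energy barrier on $\partial B_{\varepsilon/4}$; conversely, your isoperimetric upgrade from length excess to Hausdorff distance is a detail the paper leaves implicit.
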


\begin{proof}
    Let $\mathcal{M}$ be a network composed of curves $\{\q_j(x)\}_{j=1}^3$, $x\in[0,1]$ and $\bfphi = (\phi_1,\phi_2,\phi_3)$ be a set of orientation angles. By replacing
    $\phi_j$ with $\phi_j - \phi_1$ if necessary, we may assume without loss of generality that $\phi_1 = 0$, in which case the total surface energy can be written as
    \begin{align*}
        E(\mathcal{M},\bfphi) = \sigma(\phi_3)\mathrm{Length}(\q_1) + \sigma(-\phi_2)\mathrm{Length}(\q_2) + \sigma(\phi_2 - \phi_3)\mathrm{Length}(\q_3).
    \end{align*}

    Consider the auxiliary energy function $F:\R^4 \to \R$ defined by
    \begin{align}
        F(a,b,r,s) &\coloneqq \sigma(s)|(a,b)-\x_1|  +  \sigma(-r)|(a,b)-\x_2|  +  \sigma(r-s)|(a,b)-\x_3|\\
        &= \sigma(2\pi - s)|(a,b)-\x_1|  +  \sigma(r)|(a,b)-\x_2|  +  \sigma(s-r)|(a,b)-\x_3|
    \end{align}
    Let $(a_\mathcal{M}, b_\mathcal{M}) = \q_1(0)$ be the coordinates of the triple junction of $\mathcal{M}$. Observe that
    \begin{align}\label{eq:F1}
        E(\mathcal{M},\bfphi) &\geq F(a_\mathcal{M}, b_\mathcal{M},\phi_2,\phi_3)
    \end{align}
    and
    \begin{align}\label{eq:F2}
        E(\mathcal{N},\bftheta) &= F(0,0,\theta_2,\theta_3).
    \end{align}

    The gradient and Hessian of $F$ are given by
    \begin{align}
        \nabla F(a,b,r,s) &= \begin{pmatrix}
            \dfrac{a(r^2+c)}{|(a,b)-\x_2|} + \dfrac{(a-\sqrt{3}/2)(c+(2\pi -s)^2)}{|(a,b)-\x_1|} + \dfrac{(a+\sqrt{3}/2)(c+(r-s)^2)}{|(a,b)-\x_3|}\\
            \dfrac{(b-1)(r^2+c)}{|(a,b)-\x_2|} + \dfrac{(b+1/2)(c+(2\pi -s)^2)}{|(a,b)-\x_1|} + \dfrac{(b+1/2)(c+(r-s)^2)}{|(a,b)-\x_3|}\\
            2r|(a,b)-\x_2| + 2(r-s)|(a,b)-\x_3|\\
            -2(2\pi-s)|(a,b)-\x_1| - 2(r-s)|(a,b)-\x_3|
        \end{pmatrix}
    \end{align}
    and thus
    \begin{align}
        \nabla F(0,0,\theta_2,\theta_3) = \begin{pmatrix}
            0\\0\\0\\0
        \end{pmatrix}.
    \end{align}
    Moreover, we compute the Hessian matrix
    \begin{align}
        \nabla^2 F(0,0,\theta_2,\theta_3) &= \begin{pmatrix}
            \frac{3c}{2} + \frac{2\pi^2}{3} & 0 & -\frac{2\pi}{\sqrt{3}} & \frac{4\pi}{\sqrt{3}}\\
            0 & \frac{3c}{2} + \frac{2\pi^2}{3} & -2\pi & 0\\
            -\frac{2\pi}{\sqrt{3}} & -2\pi & 4 & -2\\
            \frac{4\pi}{\sqrt{3}} & 0 & -2 & 4
        \end{pmatrix}
    \end{align}
    which has eigenvalues
    \begin{align}
        \lambda_1(c) &= \frac{1}{12}\bigg(9c + 12 + 4\pi^2 - \sqrt{81c^2 + 72c(\pi^2-3)+16(\pi^4+18\pi^2+9)}\bigg),\\
        \lambda_2(c) &= \frac{1}{12}\bigg(9c + 12 + 4\pi^2 + \sqrt{81c^2 + 72c(\pi^2-3)+16(\pi^4+18\pi^2+9)}\bigg),\\
        \lambda_3(c) &= \frac{1}{12}\bigg( 9c + 36 + 4\pi^2 - \sqrt{81c^2 + 72c(\pi^2-9)+16(\pi^4 + 54\pi^2 + 81)} \bigg),\\
        \lambda_4(c)  &= \frac{1}{12}\bigg( 9c + 36 + 4\pi^2 + \sqrt{81c^2 + 72c(\pi^2-9)+16(\pi^4 + 54\pi^2 + 81)} \bigg).
    \end{align}
    Thus, for all sufficiently large $c\geq \overline{c}$ we have $\lambda_j(c) > 0$ for all $j=1,2,3,4$ and we conclude that $(0,0,\theta_2,\theta_3)$ is a strict local minimizer of $F$. The local minimality assertion of the Proposition then follows from (\ref{eq:F1}-\ref{eq:F2}).

    To prove stability, let $\varepsilon>0$. Let $(\{\q_j(x,t)\}_{j=1}^3 , \{\phi_j(t)\}_{j=1}^3)$, $(x,t)\in[0,T) \times [0,1]$ be a solution to the parametric system with initial data $(\{\q_j^0(x)\}_{j=1}^3 , \{\phi_j^0\}_{j=1}^3)$ satisfying
    \begin{align}\label{eq:initial_angle}
        |(\phi_{j-1}^0 - \phi_j^0) - (\theta_{j-1} - \theta_j)|<\delta, \qquad j=1,2,3, 
    \end{align}
    \begin{align}\label{eq:initial_length}
        |\mathrm{Length}(\q_j^0) - \mathrm{Length}(\p_j)|<\delta, &\qquad j=1,2,3,
    \end{align}
    \begin{align}\label{eq:initial_tj}
       |\q_j^0(0)|<\delta, &\qquad j=1,2,3,
    \end{align}
    where $\delta>0$ will be determined later. Let $\mathcal{M}(t)$ denote the evolving network composed of the curves $\q_j$ and $(a(t),b(t))$ denote the coordinates of the triple junction at time $t$. Taking into account the translational invariance of the total surface energy $E$ with respect to $\bfphi$, we introduce new variables 
    \begin{align}
        \Tilde{\bfphi} = (\Tilde{\phi}_1(t) \equiv 0,\Tilde{\phi}_2(t),\Tilde{\phi}_3(t)) \coloneqq (\phi_1(t) - \phi_1(t), \phi_2(t) - \phi_1(t), \phi_3(t) - \phi_1(t))
    \end{align} 
    Note that the energy remains unchanged as
    \begin{align}
        E(\mathcal{M}(t),\Tilde{\bfphi}(t)) = E(\mathcal{M}(t),\bfphi(t))
    \end{align}
    for all $t\in[0,T)$, but $\Tilde{\phi}_j(t)$ may satisfy an equation slightly different from (\ref{eq:rotation}).

    Let 
    \begin{align}
        E^* \coloneqq E(\mathcal{N},\bftheta) = 3\sigma\bigg(\frac{2\pi}{3}\bigg).
    \end{align}
    By (\ref{eq:initial_angle}-\ref{eq:initial_tj}),
    \begin{align}
        E^0 \coloneqq E(\mathcal{M}(0),\bfphi(0)) \leq E^* + C_1(\delta),
    \end{align}
    where $C_1(\delta) \to 0$ as $\delta\to 0$. 

    Let $B_{\varepsilon/4}(0,0,\theta_2,\theta_3) \subset \R^4$ be the ball of radius $\varepsilon/4$ centered at $(0,0,\theta_2,\theta_3)$. Since $(0,0,\theta_2,\theta_3)$ is a strict local minimum of $F$, we assume that $\varepsilon$ is sufficiently small so that 
    \begin{align}
        F(a,b,r,s) > F(0,0,\theta_2,\theta_3) = E^*
    \end{align}
    for all $(a,b,r,s) \in \overline{B}_{\varepsilon/4}(0,0,\theta_2,\theta_3) \setminus \{(0,0,\theta_2,\theta_3)\}$. Let
    \begin{align}
        m(\varepsilon) \coloneqq \min_{\partial B_{\varepsilon/4}(0,0,\theta_2,\theta_3) }F(\cdot)
    \end{align}
    and pick $r(\varepsilon)<\varepsilon/4$ sufficiently small so that 
    \begin{align}
        \max_{\overline{B}_{r(\varepsilon)}(0 ,0,\theta_2,\theta_3)} F(\cdot) < \frac{m(\varepsilon)}{2}.
    \end{align}
    Since the total surface energy $E$ is decreasing along the evolution, we get
    \begin{align}
        F(a(t),b(t),\Tilde{\phi}_2(t),\Tilde{\phi}_3(t)) \leq E(\mathcal{M}(t),\Tilde{\bfphi}(t)) \leq E^0 \leq E^* + C_1(\delta) \leq F(a(0),b(0),\Tilde{\phi}_2^0,\Tilde{\phi}_3^0) + C_2(\delta),
    \end{align}
    where $C_2(\delta)\to 0$ as $\delta \to 0$ and the last inequality follows from (\ref{eq:initial_angle}-\ref{eq:initial_tj}). Thus, if the initial data satisfies 
    \begin{align}
        (a(0),b(0),\Tilde{\phi}_2^0, \Tilde{\phi}_3^0) \in B_{r(\varepsilon)}(0,0,\theta_2,\theta_3),
    \end{align}
    then
    \begin{align}
        F(a(t),b(t), \Tilde{\phi}_2(t),\Tilde{\phi}_3(t)) \leq \frac{m(\varepsilon)}{2} + C_2(\delta) < m(\varepsilon)
    \end{align}
    as long as $C_2(\delta)<\frac{m(\varepsilon)}{2}$ and we have
    \begin{align}
        (a(t),b(t),\Tilde{\phi}_2(t), \Tilde{\phi}_3(t)) \in B_{\varepsilon/4} (0,0,\theta_2,\theta_3) \qquad \forall t\in[0,T),
    \end{align}
    which means that the triple junction and the misorientation angles remain close to those of the stationary solution at all times. This, together with the fact that $E(\mathcal{M}(t),\bfphi(t))\leq E^*+C_1(\delta)$, imply the stability assertion.

\end{proof}

Returning to differences from \cite{ep1,ep2}, we recall there the class of surface tension functions $\sigma$ considered are {\em convex} functions of the misorientation angle.
As we have seen, when the $2\pi$-periodic nature of the orientation angles are taken into account, this convex dependence can translate into the stability of the nontrivial solution (\ref{eq:stationary1}-\ref{eq:stationary2}); hence, with periodic orientation angles, not only are there non-trivial equilibrium solutions to the model of \cite{ep1,ep2}, but some are in fact stable.
In fact, we have seen that these solutions are stable also for our more general model (\ref{pde}) that allows curved interfaces, in the sense described in Proposition \ref{prop:stability}.

However, it is worth noting that there is ample reason to be wary of convex dependence of surface tension on misorientation: For one, as already mentioned, the well known model of Read \& Shockley \cite{read_shockley} prescribes a specific, concave dependence on misorientation.
Secondly, concavity of $\sigma$ (together with $\sigma(0)=0$), is equivalent to {\em subadditivity}  of the surface tension, which in turn ensures the triangle inequality (\ref{eq:triangle}) and hence well-posedness of more elaborate models of microstructure that allow nucleation.
We note that in the simplified ODE model of \cite{ep1,ep2}, it can be shown (via linear stability analysis) that the equilibrium solution $(\mathcal{N},\bftheta)$ is in fact unstable, when the surface tension is $2\pi-$periodic and is given by the Read-Shockley model
\begin{align}
    \sigma(\theta) = A\theta(B-\ln(\theta))
\end{align}
for $\theta\in[0,\pi]$.

\section{Existence of Solutions for More General Initial Data}
The existence of solutions in Theorem \ref{maintheorem} relies on the fact that the initial curves $\{\p_j^0\}_{j=1}^3$ are of class $C^{2+\alpha}$ and satisfy a set of compatibility conditions (\eqref{compatibility1}, \eqref{compatibility2}) involving up to second derivatives of $\p_j^0$. In this section, we would like to remove these conditions in the same way as was achieved in \cite{pluda}. 

We first introduce some different function spaces. Let $p\in[1,+\infty)$. The space $W_p^{2,1} = W_p^{2,1}((0,1)\times (0,T) ; \R^d)$ consists of functions $f(x,t)$ satisfying
\begin{align*}
    ||f||_{W_p^{2,1}} \coloneqq \sum_{j=0}^2 \sum_{2r+s = j}|| \partial_t^r\partial_x^s f||_{L^p} < \infty.
\end{align*}

For $s\in(0,+\infty)$, the \emph{Sobolev–Slobodeckij space} $W_p^s = W_p^s ((a,b);\R^d)$ consists of functions $f(x)$ satisfying
\begin{align*}
    ||f||_{W_p^s}\coloneqq||f||_{W_p^{\lfloor s \rfloor}} + [\partial_x^{\lfloor s \rfloor} f]_{s-\lfloor s \rfloor , p} < \infty,
\end{align*}
where 
\begin{align*}
    ||f||_{W_p^k} &\coloneqq \bigg(\sum_{0\leq m\leq k}||\partial_x^m f||_{L^p}^p \bigg)^{1/p}, \qquad k\in \mathbb{N}\\
    [f]_{\theta,p} &\coloneqq \bigg(\int_a^b \int_a^b \frac{|f(x) - f(y)|^p}{|x-y|^{\theta p + 1}}\;dx\;dy\bigg)^{1/p}, \qquad \theta\in(0,1).
\end{align*}

We seek a solution $\{u_j\}_{j=1}^6 \in W_p^{2,1}$ to the system
\begin{alignat}{2}\label{sobolevsystem}
    u_{jt} &= \frac{1}{|\p_{1x}|^2}u_{jxx}, &&(x,t)\in[0,1]\times[0,T], \quad j=1,2 \nonumber\\
    u_{jt} &= \frac{1}{|\p_{2x}|^2}u_{jxx}, &&(x,t)\in[0,1]\times[0,T], \quad j=3,4 \nonumber\\
    u_{jt} &= \frac{1}{|\p_{3x}|^2}u_{jxx}, &&(x,t)\in[0,1]\times[0,T], \quad j=5,6 \nonumber\\
    u_j(x,0) &= u_j^0(x), &&x\in[0,1], \quad j=1,\ldots,6 \nonumber\\
    u_j(0,t) &= u_{j+2}(0,t) = u_{j+4}(0,t), &&t\in[0,T], \quad j=1,2 \nonumber\\
    u_{1t} &= \frac{1}{\mu}\bigg( \frac{1}{|\p_{1x}|}u_{1x} + \frac{1}{|\p_{2x}|}u_{3x} + \frac{1}{|\p_{3x}|}u_{5x}\bigg), &&\text{ at }x=0, \quad t\in[0,T] \nonumber\\
    u_{2t} &= \frac{1}{\mu}\bigg( \frac{1}{|\p_{1x}|}u_{2x} + \frac{1}{|\p_{2x}|}u_{4x} + \frac{1}{|\p_{3x}|}u_{6x}\bigg), &&\text{ at }x=0, \quad t\in[0,T] \\
    \big( u_1(1,t) , u_2(1,t) \big) &\equiv \mathbf{x}_1, && t\in[0,T] \nonumber\\
    \big( u_3(1,t) , u_4(1,t) \big) &\equiv \mathbf{x}_2, && t\in[0,T] \nonumber\\
    \big( u_5(1,t) , u_6(1,t) \big) &\equiv \mathbf{x}_3, && t\in[0,T] \nonumber\\
\end{alignat}

\begin{theorem}\label{sobolevsolution}
    Let $p \in (3,\infty)$ and $\p_j^0 \in W_p^{2 - 2/p}((0,1);\R^2)$, $j=1,2,3$ be three (parametrized) curves that form an initial network. Assume that $\sigma \equiv 1$ and 
    \begin{align*}
        \delta = \min_{j=1,2,3} \inf_{x\in[0,1]} |\p_{jx}^0 (x)|>0.
    \end{align*}
    Then there exists $M,T>0$ such that the system \eqref{sobolevsystem} has a unique solution 
    \begin{align*}
        \{u_j(x,t)\}_{j=1}^6 \in  W_p^{2,1}((0,1)\times(0,T);\R^6)  \cap \overline{B_M}.
    \end{align*}
\end{theorem}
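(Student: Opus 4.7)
The plan is to carry the Bronsard--Reitich-style fixed point argument of Theorem \ref{maintheorem} over to the $W_p^{2,1}$ parabolic Sobolev setting, following the scheme of \cite{pluda} for the Herring case. Since $\sigma \equiv 1$, the orientation ODEs disappear and only the PDE part of the system remains. The crucial regularity fact is that for $p > 3$ the Sobolev--Slobodeckij trace embedding gives $W_p^{2-2/p}((0,1)) \hookrightarrow C^{1+\alpha}([0,1])$ with $\alpha = 1 - 3/p > 0$, and its parabolic analogue gives $W_p^{2,1}(Q_T) \hookrightarrow C^{1+\alpha,(1+\alpha)/2}(Q_T)$. Thus $|\p_{jx}^0|$ is continuous and stays bounded below by $\delta/2$ on a short time interval, and the traces $\overline{u}_{jx}(0,\cdot)$ make sense as H\"older-in-time functions, so that the nonlinear drag boundary condition is well defined.

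I would define the fixed-point operator $\mathcal{R}$ on a closed ball of radius $M$ in $W_p^{2,1}(Q_T;\R^6)$ with prescribed initial traces $u_j^0$, sending $\overline{\u}$ to the unique solution of the linear system
\begin{align*}
    u_{jt} - D_j^0 u_{jxx} = \big(D_j^{\overline{\p}} - D_j^0\big)\overline{u}_{jxx}, \qquad D_j^0 = |\p_{k(j)x}^0|^{-2},
\end{align*}
with Dirichlet data $\Phi_{\overline{\p}}(t)$, $\Psi_{\overline{\p}}(t)$ at $x=0$ (the integrated form of the drag condition, exactly as in the linearization in Section 4) and Dirichlet data $\x_j$ at $x=1$. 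Because the six Dirichlet conditions decouple component-wise in $x$, maximal $L^p$-regularity for \emph{scalar} parabolic equations (Ladyzhenskaya--Solonnikov--Ural'tseva, Weidemaier) applies and yields the Schauder-type estimate
\begin{align*}
    \sum_{j=1}^6 \|u_j\|_{W_p^{2,1}(Q_T)} \leq C_p \bigg( \sum_{j=1}^6 \|f_j\|_{L^p(Q_T)} + \sum_{j=1}^6 \|u_j^0\|_{W_p^{2-2/p}} + \|\Phi_{\overline{\p}}\|_{W_p^{1-1/(2p)}(0,T)} + \|\Psi_{\overline{\p}}\|_{W_p^{1-1/(2p)}(0,T)} \bigg).
\end{align*}
Well-definedness (invariance of the ball) and the contraction property then follow by estimates mirroring Steps 2 and 3 of the proof of Theorem \ref{maintheorem}, with H\"older seminorms replaced by Sobolev--Slobodeckij seminorms. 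The decisive small factors are that $D_j^{\overline{\p}} - D_j^0$ vanishes at $t=0$ and is H\"older continuous in $t$ (via the parabolic embedding), and that $\Phi_{\overline{\p}}$, being a time integral of a bounded quantity, gains a fractional order of smoothness of size $O(T^\gamma)$ for some $\gamma > 0$.

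The main obstacle is the reduced regularity at the corner $(0,0) \in Q_T$: because $2 - 2/p < 2$, the second-derivative compatibility conditions \eqref{compatibility1}, \eqref{compatibility2} are not even meaningful, and only the zeroth-order compatibility (that the three curves share a common triple junction, which is built into the definition of an initial network) may be imposed. Making the $L^p$ maximal regularity theorem apply with only this reduced compatibility hinges on the dynamic nature of the drag boundary condition: its integrated form gives $\Phi_{\overline{\p}} \in W_p^{1-1/(2p)}(0,T)$ automatically with $\Phi_{\overline{\p}}(0) = u_1^0(0)$, and no further matching of derivatives at $(0,0)$ is needed --- this is exactly the correct compatibility required by the trace theory. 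Verifying this cleanly, together with checking the Lopatinskii condition for the scalar Dirichlet problems that arise (which is standard), is the technical heart of the argument and is carried out in the Herring setting in \cite{pluda}; the drag boundary condition slots into the same framework because it reduces, after linearization, to scalar Dirichlet data on each component.
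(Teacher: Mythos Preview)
Your proposal is correct and follows essentially the same route as the paper: linearize around the initial data, recast the drag condition in integrated Dirichlet form $\Phi_{\overline{\p}}$, $\Psi_{\overline{\p}}$, invoke scalar maximal $L^p$-regularity component-wise, and close a contraction in $W_p^{2,1}$ by exploiting the small-time factors. The paper's only addition is to spell out explicitly the two drag-specific estimates you allude to --- that $\Phi_{\overline{\p}} \in W_p^{1-1/(2p)}$ with $\Phi_{\overline{\p}}(0)=u_1^0(0)$, and that $\|\Phi_{\overline{\p}}-\Phi_{\overline{\q}}\|_{W_p^{1-1/(2p)}} \leq C T^\beta |||\overline{\p}-\overline{\q}|||_{W_p^{2,1}}$ --- while referring all remaining estimates back to \cite{pluda}.
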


\begin{remark}
    By the Sobolev Embedding Theorem (\cite{Triebel_1978} Theorem 4.6.1(e)), we have $W_p^{2-2/p}((0,1),\R^2)\xhookrightarrow{} C^{1+\beta}([0,1];\R^2)$ for $\beta\in(0,1-3/p)$  and so the derivatives $|\p_{jx}^0 (x)|$ make sense. Note also that we only require the initial   curves to meet at a triple junction, i.e. $\p_1^0(0) = \p_2^0(0) = \p_3^0(0)$, and that the endpoints satisfy $\p_j^0 (1) = \x_j$, $j=1,2,3$. Unlike the hypotheses in Theorem \ref{maintheorem}, we do not impose any conditions on higher derivatives of the initial curves.
\end{remark}

\begin{remark}
    Following \cite{pluda}, the solution space $W_p^{2,1}((0,1)\times(0,T);\R^6)$ is equipped with the norm
    \begin{align*}
        |||f|||_{W_p^{2,1}}\coloneqq ||f||_{W_p^{2,1}} + ||f(\cdot,0)||_{W_p^{2-2/p}((0,1))}
    \end{align*}
    and $\overline{B_M}$ refers to the ball of radius $M$ in this norm. 
\end{remark}

\begin{proof}
    The proof of Theorem \ref{sobolevsolution} is very similar to that of Theorem 3.14 in \cite{pluda}, save for a few small modifications (since we impose the triple junction drag condition instead of the Herring angle condition in \cite{pluda}). Thus, we will just provide an outline of their proof and give only the details of the necessary modifications. Essentially, the proof follows the strategy of \cite{bronsardreitich}, which was the same strategy that was used to prove Theorem \ref{maintheorem}:
    \begin{enumerate}
        \item linearizing the system \eqref{sobolevsystem} about the initial data 
        \item proving existence and uniqueness of solutions to the linearized system using parabolic theory \cite{ladyzhenskaia} (this time the solution will be in $W_p^{2,1}$ instead of $C^{2+\alpha, 1+\alpha/2}$)
        \item proving existence of solutions to the nonlinear system \eqref{sobolevsystem} by a contraction mapping argument.
    \end{enumerate}

    Take an initial network consisting of curves $\p_j^0 \in W_p^{2- 2/p}((0,1);\R^2)$ satisfying $\p_1^0(0) = \p_2^0(0) = \p_3^0(0)$, $\p_j^0(1) = \x_j$, $j=1,2,3$. For any $f(x,t) \in L^p((0,T);L^p((0,1);\R^6))$ and $\Phi(t), \Psi(t)\in W_p^{1 - 1/(2p)}((0,T);\R)$ satisfying the compatibility conditions $\Phi(0) = \Psi(0) = u_j^0(0)$, $j=1,2$, there exists a unique solution $\{u_j\}_{j=1}^6$ to the linear system
    \begin{align}\label{linearizedsobolev}
        u_{jt} - D_j u_{jxx} &= f_j \\
        u_j(x,0) &= u_j^0(x) \nonumber 
    \end{align}
    subject to the boundary conditions
\begin{alignat*}{3}
    u_1(0,t) = u_3(0,t) = u_5(0,t) &= \Phi(t),\\
    u_2(0,t) = u_4(0,t) = u_6(0,t) &= \Psi(t),\\
    \big( u_1(1,t) , u_2(1,t) \big) &\equiv \mathbf{x}_1,\\
    \big( u_3(1,t) , u_4(1,t) \big) &\equiv \mathbf{x}_2,\\
    \big( u_5(1,t) , u_6(1,t) \big) &\equiv \mathbf{x}_3.
\end{alignat*}
We recall that the $u_j$ are the components of the curves $\p_j$ and the coefficients $D_j$ were defined in \eqref{linearized}. Moreover, the solution satisfies 
\begin{align*}
    \sum_{j=1}^6 ||u_j||_{W_p^{2,1}} \leq C \bigg(  & ||f||_{L^p((0,T);L^p((0,1)))} +  ||\Phi||_{W_p^{1 - 1/(2p)}((0,T))} + ||\Psi||_{W_p^{1 - 1/(2p)}((0,T))} \\
    & + T^{1/p}\sum_{j,k=1}^3 |x_{j,k}| + \sum_{j=1}^6 ||u_j^0||_{W_p^{2- 2/p}((0,1))} \bigg).
\end{align*}

Similar to the proof of Theorem \ref{maintheorem}, we would like to set up a contraction mapping argument by taking 
\begin{align*}
    f_j &= \begin{cases}
        \bigg( \frac{1}{|\overline{\p}_{1x}|^2} -\frac{1}{|\p_{1x}^0|^2} \bigg)\overline{u}_{jxx} &\text{ for }j=1,2\\
        \bigg( \frac{1}{|\overline{\p}_{2x}|^2} -\frac{1}{|\p_{2x}^0|^2} \bigg)\overline{u}_{jxx} &\text{ for }j=3,4\\
        \bigg( \frac{1}{|\overline{\p}_{3x}|^2} -\frac{1}{|\p_{3x}^0|^2} \bigg)\overline{u}_{jxx} &\text{ for }j=5,6
    \end{cases}
\end{align*}
and
\begin{alignat*}{3}
    \Phi_{\overline{\p}}(t) &= u_1^0(0) + \frac{1}{\mu}\int_0^t \bigg( \frac{\overline{u}_{1x}(0,r)}{|\overline{\p}_{1x}(0,r)|} + \frac{\overline{u}_{3x}(0,r)}{|\overline{\p}_{2x}(0,r)|} + \frac{\overline{u}_{5x}(0,r)}{|\overline{\p}_{3x}(0,r)|}\bigg) \;dr\\
    \Psi_{\overline{\p}}(t) &= u_2^0(0) + \frac{1}{\mu}\int_0^t \bigg( \frac{\overline{u}_{2x}(0,r)}{|\overline{\p}_{1x}(0,r)|} + \frac{\overline{u}_{4x}(0,r)}{|\overline{\p}_{2x}(0,r)|} + \frac{\overline{u}_{6x}(0,r)}{|\overline{\p}_{3x}(0,r)|}\bigg) \;dr.
\end{alignat*}
for some given $\overline{\p}_j(x,t) \in W_p^{2,1}$. (This means solving system \eqref{linearizedsobolev} for a given RHS involving $\overline{\p}_j$, just like we did in \eqref{linearized}.) Letting $\mathcal{S}$ denote the operator that maps $\overline{\p}_j$ to the solution $\p_j \in W_p^{2,1}$ of \eqref{linearizedsobolev}, we observe that a fixed point of $\mathcal{S}$ will be a solution to the nonlinear system \eqref{sobolevsystem}. Thus, in order to invoke the contraction mapping theorem, we need to check that the operator $\mathcal{S}$ is well-defined, that it maps a suitable closed ball $\overline{B_M}$ into itself and that it is a contraction mapping. Most of these have already been established in \cite{pluda} and all we have to show are the following lemmas, which mainly concern the triple junction drag condition.

\begin{lemma}
    Let $\{\overline{\p}_j\}_{j=1}^3 \in W_p^{2,1}$ and define $\Phi_{\overline{\p}}(t)$, $\Psi_{\overline{\p}}(t)$ by
    \begin{alignat*}{3}
        \Phi_{\overline{\p}}(t) &= u_1^0(0) + \frac{1}{\mu}\int_0^t \bigg( \frac{\overline{u}_{1x}(0,r)}{|\overline{\p}_{1x}(0,r)|} + \frac{\overline{u}_{3x}(0,r)}{|\overline{\p}_{2x}(0,r)|} + \frac{\overline{u}_{5x}(0,r)}{|\overline{\p}_{3x}(0,r)|}\bigg) \;dr\\
        \Psi_{\overline{\p}}(t) &= u_2^0(0) + \frac{1}{\mu}\int_0^t \bigg( \frac{\overline{u}_{2x}(0,r)}{|\overline{\p}_{1x}(0,r)|} + \frac{\overline{u}_{4x}(0,r)}{|\overline{\p}_{2x}(0,r)|} + \frac{\overline{u}_{6x}(0,r)}{|\overline{\p}_{3x}(0,r)|}\bigg) \;dr.
\end{alignat*}
Then $\Phi_{\overline{\p}}$, $\Psi_{\overline{\p}} \in W_p^{1 - 1/(2p)}((0,T);\R)$.

\end{lemma}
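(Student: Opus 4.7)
The plan is to show that $\Phi_{\overline{\p}}$ is Lipschitz continuous on $[0,T]$, from which membership in $W_p^{1-1/(2p)}((0,T);\R)$ follows by a direct computation of the Slobodeckij seminorm; the argument for $\Psi_{\overline{\p}}$ is identical.

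First I would invoke the anisotropic parabolic Sobolev embedding: since $p>3$, we have $W_p^{2,1}((0,1)\times(0,T))\hookrightarrow C^{1+\beta,(1+\beta)/2}([0,1]\times[0,T])$ for some $\beta\in(0,1-3/p)$. Applied to each $\overline{\p}_j$, this produces continuous (in fact H\"older) traces $\overline{u}_{jx}(0,\cdot)$ and $|\overline{\p}_{jx}(0,\cdot)|$ on $[0,T]$. In the ambient contraction-mapping setup of Theorem \ref{sobolevsolution}, the competitor $\overline{\p}_j$ is drawn from a ball of fixed radius $M$ around the initial data $\p_j^0$ satisfying $|\p_{jx}^0|\ge\delta$; continuity in time then yields $|\overline{\p}_{jx}(0,t)|\ge\delta/2$ for all $t\in[0,T]$ after restricting $T$ appropriately. (Read in isolation, the lemma implicitly requires such a positivity assumption on $|\overline{\p}_{jx}(0,\cdot)|$, since otherwise the integrand is not even defined.)

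Under this lower bound the integrand
\[
g_{\overline{\p}}(r):=\frac{\overline{u}_{1x}(0,r)}{|\overline{\p}_{1x}(0,r)|}+\frac{\overline{u}_{3x}(0,r)}{|\overline{\p}_{2x}(0,r)|}+\frac{\overline{u}_{5x}(0,r)}{|\overline{\p}_{3x}(0,r)|}
\]
is continuous on $[0,T]$ and hence uniformly bounded by some constant $K=K(\delta,M)$, so $\Phi_{\overline{\p}}(t)=u_1^0(0)+\mu^{-1}\int_0^t g_{\overline{\p}}(r)\,dr$ is Lipschitz on $[0,T]$ with constant at most $K/\mu$. For any Lipschitz $f:[0,T]\to\R$ with constant $L$, setting $\theta:=1-1/(2p)\in(0,1)$ the exponent $\theta p+1=p+\tfrac{1}{2}$ gives
\[
[f]_{\theta,p}^p\le L^p\!\!\int_0^T\!\!\int_0^T|s-r|^{\,p-\theta p-1}\,ds\,dr=L^p\!\!\int_0^T\!\!\int_0^T|s-r|^{-1/2}\,ds\,dr=\tfrac{8}{3}L^pT^{3/2},
\]
which is finite because $p-\theta p-1=-\tfrac{1}{2}>-1$. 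Together with the trivial $L^p$ bound on $f$, this yields $f\in W_p^{1-1/(2p)}((0,T);\R)$, and in particular $\Phi_{\overline{\p}}\in W_p^{1-1/(2p)}((0,T);\R)$.

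The only delicate point is the uniform lower bound on $|\overline{\p}_{jx}(0,t)|$, a standard local-in-time restriction consistent with the scope of Theorem \ref{sobolevsolution}. A sharper route, more useful for the subsequent contraction estimates, would bypass the Lipschitz bound and invoke the parabolic trace theorem directly to place $\overline{u}_{jx}(0,\cdot),\overline{\p}_{jx}(0,\cdot)\in W_p^{1/2-1/(2p)}((0,T))$; since $1/2-1/(2p)>1/p$ for $p>3$, this space is a Banach algebra, so the positivity of the denominator yields $g_{\overline{\p}}\in W_p^{1/2-1/(2p)}((0,T))$ and consequently $\Phi_{\overline{\p}}\in W_p^{3/2-1/(2p)}((0,T))\hookrightarrow W_p^{1-1/(2p)}((0,T);\R)$, with quantitative control in terms of $\delta,\mu,M$.
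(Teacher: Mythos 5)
Your proof is correct and follows essentially the same route as the paper: a Lipschitz bound on $\Phi_{\overline{\p}}$ followed by the direct computation $\int_0^T\int_0^T|t-s|^{-1/2}\,ds\,dt=\tfrac{8}{3}T^{3/2}$ of the Slobodeckij seminorm with exponent $\theta p+1=p+\tfrac{1}{2}$. The one place where the paper is simpler is the integrand bound: since $\overline{u}_{1x},\overline{u}_{3x},\overline{u}_{5x}$ are components of the vectors $\overline{\p}_{1x},\overline{\p}_{2x},\overline{\p}_{3x}$ respectively, each quotient has absolute value at most $1$, so the integrand is bounded by $3$ and $\Phi_{\overline{\p}}$ is Lipschitz with constant $3/\mu$ unconditionally --- no lower bound $|\overline{\p}_{jx}(0,\cdot)|\ge\delta/2$, no ball radius $M$, and no parabolic Sobolev embedding are needed for this particular lemma (those ingredients only enter in the subsequent difference estimate).
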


\begin{proof}
    We will only prove the statement for $\Phi_{\overline{\p}}$ since the proof for $\Psi_{\overline{\p}}$ is the same. 

    For every $t\in [0,T]$,
    \begin{align*}
        |\Phi_{\overline{\p}}(t)| &\leq |u_1^0 (0)| + \frac{1}{\mu}\int_0^t 3\;dr\\
        & \leq |u_1^0 (0)| + \frac{3T}{\mu}
    \end{align*}
    and so $\Phi_{\overline{\p}} \in L^p((0,T);\R)$.

    Next,
    \begin{align*}
        [\Phi_{\overline{\p}}]_{1 - 1/(2p),p} &= \bigg( \int_0^T \int_0^T \frac{|\Phi(t) - \Phi(s)|^p}{|t-s|^{(1 - 1/(2p))p + 1}}\;ds\;dt \bigg)^{1/p}\\
        &\leq \bigg( \int_0^T \int_0^T \frac{\big(\frac{3}{\mu}|t-s|\big)^p}{|t-s|^{p + 1/2}}\;ds\;dt \bigg)^{1/p}\\
        & = \frac{3}{\mu} \bigg( \int_0^T \int_0^T |t-s|^{-1/2}\;ds\;dt \bigg)^{1/p}\\
        &= \frac{3}{\mu}\bigg( \frac{8}{3}T^{3/2} \bigg)^{1/p}\\
        &<\infty.
    \end{align*}

\end{proof}

\begin{lemma}
    Let $\overline{\p} = \{\overline{\p}_j\}_{j=1}^3, \overline{\q} = \{\overline{\q}_j\}_{j=1}^3 \in W_p^{2,1}$ with $\overline{\q}_j(x,0) = \overline{\p}_j(x,0) = \p_j^0(x)$, $j=1,2,3$. Choose $T>0$ sufficiently small so that $|\overline{\p}_{jx}(x,t)|, |\overline{\q}_{jx}(x,t)|>\frac{\delta}{2}$ for all $(x,t)\in[0,1]\times[0,T]$. (This holds for all sufficiently small $T$ due to Lemma 3.9 of \cite{pluda}.) 
    
    Then there exists $\beta>0$ such that
    \begin{align*}
        ||\Phi_{\overline{\p}} - \Phi_{\overline{\q}}||_{W_p^{1 - 1/(2p)}} \leq C_{\delta,\mu} T^\beta |||\overline{\p} - \overline{\q}|||_{W_p^{2,1}}.
    \end{align*}
\end{lemma}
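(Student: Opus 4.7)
The approach is to exploit the integral structure of $\Phi$ combined with parabolic trace theory. Since $\overline{\p}(\cdot,0) = \overline{\q}(\cdot,0)$, both $\Phi_{\overline{\p}}(0)$ and $\Phi_{\overline{\q}}(0)$ equal $u_1^0(0)$, so $D(t) \coloneqq \Phi_{\overline{\p}}(t) - \Phi_{\overline{\q}}(t) = \mu^{-1}\int_0^t G(r)\,dr$ vanishes at $t=0$, where $G$ is the difference of the three integrand terms. The algebraic identity
\[\frac{\overline{u}_{kx}}{|\overline{\p}_{jx}|} - \frac{\overline{v}_{kx}}{|\overline{\q}_{jx}|} = \frac{\overline{u}_{kx}-\overline{v}_{kx}}{|\overline{\p}_{jx}|} + \overline{v}_{kx}\cdot\frac{|\overline{\q}_{jx}|-|\overline{\p}_{jx}|}{|\overline{\p}_{jx}|\,|\overline{\q}_{jx}|}\]
combined with the lower bound $|\overline{\p}_{jx}|,|\overline{\q}_{jx}| \geq \delta/2$ yields a pointwise estimate of the form $|G(r)| \leq \frac{C_\delta}{\mu}\big( |\overline{w}_{kx}(0,r)| + |\overline{v}_{kx}(0,r)|\,|\overline{Z}_{jx}(0,r)| \big)$ summed over the three relevant index pairs, where $\overline{\w} \coloneqq \overline{\u}-\overline{\vv}$ and $\overline{\Z}_j \coloneqq \overline{\p}_j-\overline{\q}_j$.

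The $T^\beta$ gain will come from applying the parabolic trace theorem (as used in Pluda): the map $u \mapsto u_x(0,\cdot)$ continuously sends $W_p^{2,1}((0,1)\times(0,T))$ into $W_p^{1/2 - 1/(2p)}((0,T))$ with a constant independent of $T$. Because $p > 3$, we have $1/2 - 1/(2p) > 1/p$, so the Sobolev embedding yields $W_p^{1/2-1/(2p)}((0,T)) \hookrightarrow C^\beta([0,T])$ with $\beta = (p-3)/(2p) > 0$. Since $\overline{\w}(\cdot,0) \equiv 0$ and $\overline{\Z}(\cdot,0) \equiv 0$, the traces $\overline{w}_{kx}(0,\cdot)$ and $\overline{Z}_{jx}(0,\cdot)$ vanish at $t=0$, and the Hölder embedding together with $f(0)=0$ yields $\|\overline{w}_{kx}(0,\cdot)\|_{L^\infty((0,T))} \leq C T^\beta \|\overline{\w}\|_{W_p^{2,1}}$ (and similarly for $\overline{\Z}$), while $\|\overline{v}_{kx}(0,\cdot)\|_{L^\infty}$ is controlled by the $W_p^{2,1}$ norm of $\overline{\vv}$. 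Assembled, this gives $\|G\|_{L^\infty((0,T))} \leq C_{\delta,\mu}\,T^\beta\,|||\overline{\p}-\overline{\q}|||_{W_p^{2,1}}$ (with implicit dependence on the fixed ball radius).

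The remaining step converts this $L^\infty$ bound on $G = \mu D'$ into the $W_p^{1-1/(2p)}$-norm of $D$. From $D(0) = 0$ and $|D'| \leq \mu^{-1}\|G\|_{L^\infty}$ one immediately gets $\|D\|_{L^p} \leq T^{1+1/p}\mu^{-1}\|G\|_{L^\infty}$, while the Lipschitz estimate $|D(t)-D(s)| \leq \mu^{-1}\|G\|_{L^\infty}|t-s|$ gives
\[[D]_{1-1/(2p),p}^p \leq \mu^{-p}\|G\|_{L^\infty}^p \int_0^T\int_0^T |t-s|^{-1/2}\,ds\,dt = \tfrac{8}{3}\mu^{-p}\|G\|_{L^\infty}^p\, T^{3/2},\]
so $[D]_{1-1/(2p),p} \leq C\mu^{-1} T^{3/(2p)}\|G\|_{L^\infty}$. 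Adding the two contributions yields the claim with $\beta' = \beta + \min\{1+1/p,\,3/(2p)\} > 0$. The main obstacle is the second step: pinning down the parabolic trace space for $u_x$ at $x=0$ in Pluda's framework and using $p>3$ consistently to convert the vanishing of initial data into Hölder decay; all smallness in $T$ must funnel through the difference factors $\overline{\w}$ and $\overline{\Z}$, since $\overline{\vv}$'s trace itself carries no $T^\beta$ gain.
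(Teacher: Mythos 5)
Your proposal is correct and follows essentially the same route as the paper: write $\Phi_{\overline{\p}}-\Phi_{\overline{\q}}=\mu^{-1}\int_0^t G$, bound $G$ pointwise via the quotient decomposition and the lower bound $\delta/2$, then get the $L^p$ bound from $|D|\leq T\|G\|_{L^\infty}$ and the seminorm bound from the Lipschitz estimate together with $\big(\int_0^T\int_0^T|t-s|^{-1/2}\,ds\,dt\big)^{1/p}\sim T^{3/(2p)}$. The only difference is that you additionally extract a factor $T^{(p-3)/(2p)}$ from $\|G\|_{L^\infty}$ itself via the trace theorem, Hölder embedding, and the vanishing of $\overline{\w}$, $\overline{\Z}$ at $t=0$; the paper is content with the $T$-independent bound $\|G\|_{L^\infty}\leq C_\delta\,|||\overline{\p}-\overline{\q}|||$ and takes $\beta=3/(2p)$, which already suffices.
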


\begin{proof}
    Similar to the estimates in section \ref{estimatephi}, we get
    \begin{align*}
        |\Phi_{\overline{\p}}(t) - \Phi_{\overline{\q}}(t)| \leq C_{\delta,\mu}T |||\overline{\p} - \overline{\q}|||_{W_p^{2,1}}
    \end{align*}
    and so 
    \begin{align*}
        ||\Phi_{\overline{\p}} - \Phi_{\overline{\q}}||_{L^p((0,T))} \leq C_{\delta,\mu}T^{1+1/p} |||\overline{\p} - \overline{\q}|||_{W_p^{2,1}}.
    \end{align*}

    Next,
    \begin{align*}
        [\Phi_{\overline{\p}} - \Phi_{\overline{\q}}]_{1 - 1/(2p),p} &= \bigg(\int_0^T \int_0^T |t-s|^{-p-1/2}|\Phi_{\overline{\p}}(t) - \Phi_{\overline{\q}}(t) - \Phi_{\overline{\p}}(s) + \Phi_{\overline{\q}}(s)|^p \;ds\;dt \bigg)^{1/p}.
    \end{align*}
    Using the estimate
    \begin{align*}
        |\Phi_{\overline{\p}}(t) - \Phi_{\overline{\q}}(t) - \Phi_{\overline{\p}}(s) + \Phi_{\overline{\q}}(s)| \leq |t-s|C_{\delta,\mu}|||\overline{\p} - \overline{\q}|||_{W_p^{2,1}},
    \end{align*}
    we get
    \begin{align*}
        [\Phi_{\overline{\p}} - \Phi_{\overline{\q}}]_{1 - 1/(2p),p} &\leq C_{\delta,\mu}|||\overline{\p} - \overline{\q}|||_{W_p^{2,1}}\bigg( \int_0^T \int_0^T |t-s|^{-1/2}\;ds\;dt \bigg)^{1/p}\\
        &\leq C_{\delta,\mu}T^{3/(2p)}|||\overline{\p} - \overline{\q}|||_{W_p^{2,1}}
    \end{align*}

    Thus, 
    \begin{align*}
        ||\Phi_{\overline{\p}} - \Phi_{\overline{\q}}||_{W_p^{1 - 1/(2p)}} \leq C_{\delta,\mu} T^\beta |||\overline{\p} - \overline{\q}|||_{W_p^{2,1}}
    \end{align*}
    with $\beta = \frac{3}{2p}$.

\end{proof}

The two preceding lemmas, together with Proposition 3.10 - 3.12 of \cite{pluda} show that the operator $\mathcal{S}$ is indeed a contraction. Then, by an argument similar to Proposition 3.13 of \cite{pluda}, we see that $\mathcal{S}$ maps a closed ball into itself. Lastly, we conclude the existence and uniqueness of a solution to \eqref{sobolevsystem} using the contraction mapping theorem.

\end{proof}

\newpage

\bibliographystyle{plain}
\bibliography{references}

\end{document}